\documentclass[11pt,a4paper]{amsart}

\marginparwidth0.5cm

\textwidth155mm
\textheight220mm
\hoffset-15mm
\voffset-1mm

\pagestyle{myheadings}
\markright{ \hfill}

\usepackage{titletoc}
\usepackage{titlesec}
\titleformat{\section}[block]{\Large\bfseries}{\thesection}{1em}{}
\titleformat{\subsection}[block]{\bfseries}{\thesubsection}{1em}{}
\dottedcontents{section}[1.5cm]{}{3em}{1pc}
\dottedcontents{subsection}[1.5cm]{}{3em}{1pc}

\usepackage{amsmath, amsthm, amssymb}
\usepackage{amsfonts}
\usepackage[utf8]{inputenc}
\usepackage[dvips]{epsfig}
\usepackage{graphicx}
\usepackage[english]{babel}
\usepackage{hyperref}
\hypersetup{colorlinks=true,linkcolor=black,citecolor=black}

\usepackage{mathptmx}
\usepackage{cite}
\usepackage{graphicx}

\addtolength{\oddsidemargin}{1mm}
\addtolength{\textwidth}{-1mm}
\setlength\parindent{0pt}

\allowdisplaybreaks
\usepackage{color}
\usepackage[dvipsnames]{xcolor}



\def\sideremark#1{\ifvmode\leavevmode\fi\vadjust{\vbox to0pt{\vss
 \hbox to 0pt{\hskip\hsize\hskip1em
 \vbox{\hsize2.1cm\tiny\raggedright\pretolerance10000
  \noindent #1\hfill}\hss}\vbox to15pt{\vfil}\vss}}}%


\let\oldsqrt\sqrt
\def\sqrt{\mathpalette\DHLhksqrt}
\def\DHLhksqrt#1#2{%
\setbox0=\hbox{$#1\oldsqrt{#2\,}$}\dimen0=\ht0
\advance\dimen0-0.2\ht0
\setbox2=\hbox{\vrule height\ht0 depth -\dimen0}%
{\box0\lower0.4pt\box2}}

\newcommand{\R}{\mathbb{R}} 
\newcommand{\N}{\mathbb{N}} 

\newcommand{\sOmega}{\text{\tiny $\Omega$}}
\newcommand{\cEO}{{{E}_{\sOmega}\,}}
\newcommand{\cEOprime}{{ E_{\sOmega'}\,}}
\newcommand{\cEOn}{{ E_{\sOmega_n}\,}}
\newcommand{\cRO}{{ R_{\sOmega}\,}}

\newcommand{\deltaO}{{\delta_{\sOmega}\!}}

\newcommand{\loglap}{L_{\text{\tiny $\Delta \,$}}}

\newcommand{\id}{\textnormal{id}} 
\newcommand{\dist}{\textnormal{dist}} 
\newcommand{\diam}{\textnormal{diam}} 
\newcommand{\supp}{\textnormal{supp}} 

\renewcommand{\phi}{\varphi}

\newcommand{\cE}{{\mathcal E}}

\newcommand{\cH}{{\mathcal H}}


\newcommand{\cGs}{{\mathbb G}}
\newcommand{\cHs}{{\mathbb H}}
\newcommand{\cQs}{{\mathbb Q}}
\newcommand{\cFs}{{\mathbb F}}
\newcommand{\cDs}{{\mathbb D}}

\newcommand{\eps}{\varepsilon}


\theoremstyle{definition}
\newtheorem{defi}{Definition}[section]
\newtheorem{remark}[defi]{Remark}

\theoremstyle{plain} 
\newtheorem{thm}[defi]{Theorem}
\newtheorem{prop}[defi]{Proposition}
\newtheorem{lemma}[defi]{Lemma}
\newtheorem{cor}[defi]{Corollary}

\theoremstyle{definition}

\numberwithin{equation}{section}


\title{A new look at the fractional Poisson problem via the Logarithmic Laplacian}

\author[Sven Jarohs]
{Sven Jarohs}
\address{Institut f\"ur Mathematik,
Goethe-Universit\"at Frankfurt.
Robert-Mayer-Str. 10
D-60629 Frankfurt am Main, Germany}
\email{jarohs@math.uni-frankfurt.de}

\author[Alberto Salda\~{n}a]
{Alberto Salda\~{n}a}
\address{Instituto de Matemáticas\\
Universidad Nacional Autónoma de México\\
Circuito Exterior, Ciudad Universitaria\\
04510 Coyoacán, Ciudad de México, Mexico\\
}
\email{alberto.saldana@im.unam.mx}

\author[Tobias Weth]
{Tobias Weth}
\address{Institut f\"ur Mathematik,
Goethe-Universit\"at Frankfurt.
Robert-Mayer-Str. 10
D-60629 Frankfurt am Main, Germany}
\email{weth@math.uni-frankfurt.de}

\date{\today}

\begin{document}
\begin{abstract}
We analyze the $s$-dependence of solutions $u_s$ to the family of fractional Poisson problems 
$$
(-\Delta)^s u =f \quad \text{in $\Omega$},\qquad u \equiv 0 \quad \text{on $\R^N\setminus \Omega$}
$$
in an open bounded set $\Omega \subset \R^N$, $s \in (0,1)$. In the case where $\Omega$ is of class $C^2$ and $f \in C^{\alpha}(\overline{\Omega})$ for some $\alpha>0$, we show that the map $(0,1) \to L^\infty(\Omega)$, $s\mapsto u_s$ is of class $C^1$, and we characterize the derivative $\partial_s u_s$ in terms of the logarithmic Laplacian of $f$. As a corollary, we derive pointwise monotonicity properties of the solution map $s \mapsto u_s$ under suitable assumptions on $f$ and $\Omega$. Moreover, we derive explicit bounds for the corresponding Green operator on arbitrary bounded domains which are new even for the case $s=1$, i.e., for the local Dirichlet problem $-\Delta u = f$ in $\Omega$, $u \equiv 0$ on $\partial \Omega$.
\end{abstract}
\maketitle

\begin{center}
	{CONTENTS}
\end{center}
\startcontents[sections]
\printcontents[sections]{l}{1}{\setcounter{tocdepth}{1}}

\section{Introduction}
The present paper is devoted to the fractional Poisson problem 
\begin{equation}\label{eq:prob1-poisson-fractional}
\left\{\begin{aligned}
(-\Delta)^s u&=f &&\text{ in $\Omega$,}\\
u&=0 &&\text{ on $\R^N  \setminus \Omega$}
\end{aligned}\right.
\end{equation}
for $s \in (0,1)$ in an open bounded set $\Omega \subset \R^N$, $N \ge 2$. For $f \in L^2(\Omega)$, (\ref{eq:prob1-torsion-fractional}) admits a unique weak solution $u \in \cH^s_0(\Omega)$. Here and in the following, we let $\cH^s_0(\Omega)$ be the completion of $C^\infty_c(\Omega)$ with respect to the norm $\|\cdot\|_{\cH^s}$ induced by the scalar product 
\begin{equation}
  \label{eq:scalar-product}
(w,v) \mapsto \cE_s(w,v) := \int_{\R^N} |\xi|^{2s} \hat w (\xi) \hat v(\xi)\,d\xi.
\end{equation}
Moreover, by definition, $u$ is a weak solution of (\ref{eq:prob1-poisson-fractional}) if 
\begin{equation}
\label{eq:prob1-poisson-fractional-weak}
 \cE_s(u,v) = \int_{\Omega}f v \,dx \qquad \text{for all $v \in \cH^s_0(\Omega)$.}
\end{equation}
These notions extend to the case $s=1$, in which 
(\ref{eq:prob1-poisson-fractional}) is replaced by the classical Poisson problem\begin{equation}
\label{eq:prob1-poisson-classical}
-\Delta u=f \quad \text{in $\Omega$,}\qquad \qquad 
u=0 \quad \text{ on $\partial \Omega$.}
\end{equation}
We also note that, if $\Omega$ has a continuous boundary, then $\cH^s_0(\Omega)$ coincides with the space of functions $w \in L^2(\R^N)$ with $w \equiv 0$ on $\R^N \setminus \Omega$ and $\cE_s(w,w)<\infty$, see \cite[Theorem 1.4.2.2]{G11}. Several important estimates have been proved in recent years regarding the unique weak solution of \eqref{eq:prob1-poisson-fractional}, which we denote by $\cGs_s f: \R^N \to \R$ in the following. In particular, if $\Omega$ is a bounded Lipschitz domain satisfying the exterior sphere condition, Ros-Oton and Serra have shown in \cite{RS12} that $\cGs_sf\in C^s(\R^N)$ for $s \in (0,1)$ if $f\in L^{\infty}(\Omega)$, thus giving the optimal regularity up to the boundary. Moreover, Silvestre \cite{S07}, Grubb \cite{G15:2} and Ros-Oton and Serra \cite{RS12,RS16} provide, in particular, estimates on the interior regularity. For $f\in C^\alpha(\overline{\Omega})$, it follows that $\cGs_sf$ is  the unique classical bounded solution of \eqref{eq:prob1-poisson-fractional}. For more information on the fractional Laplacian and its applications, see \cite{BV16} and the references therein.

\medskip

The main aim of this paper is to study the dependence of $\cGs_s f$ on the parameter $s \in (0,1)$ on general bounded domains. In particular, we shall give answers to the following questions.
{\em
\begin{enumerate}
	\item[(Q1)] Under which assumptions on $f$ and $\Omega$ is the map $s \mapsto \cGs_s f$ differentiable in a suitable function space, and how can we characterize its derivative?
\item[(Q2)] Under which assumptions on $f$ and $\Omega$ is this map pointwisely decreasing in $\Omega$?
\end{enumerate}}
As a byproduct of our results, we derive new bounds for the operator norm $\|\cGs_s\|$ of $\cGs_s$ with respect to $L^\infty(\Omega)$, which is defined as the smallest constant $C=C(N,\Omega,s)$ with the property that
\begin{equation}
  \label{eq:starting-ineq}
\|\cGs_s f\|_{L^{\infty}(\Omega)}\leq C\|f\|_{L^{\infty}(\Omega)} \qquad \text{for all $f \in L^\infty(\Omega)$.}
\end{equation}
Since $\cGs_s$ is order preserving as a consequence of the maximum principle for $(-\Delta)^s$, it follows that
\begin{align*}
\|\cGs_s\|= \sup_{\Omega} [\cGs_s 1]. 
\end{align*}
In other words, $\|\cGs_s\|$ is given as the maximal value of the (unique) solution to the {\em fractional torsion problem} 
\begin{equation}\label{eq:prob1-torsion-fractional}
\left\{\begin{aligned}
(-\Delta)^s u&=1 &&\text{ in $\Omega$,}\\
u&=0 &&\text{ on $\R^N  \setminus \Omega$}
\end{aligned}\right.
\end{equation}
In our results, we shall thus pay special attention to this particular problem.
As an example motivating our study, let us consider the case $\Omega=B_r(0)$ for fixed $r>0$. In this case the solution $u_s:= \cGs_s 1$ of (\ref{eq:prob1-torsion-fractional}) is given by 
\begin{equation}
  \label{eq:explicit-formula-fractional-torsion}
u_s(x)=\gamma_{N,s}(r^2-|x|^2)_+^{s}, \quad \text{with $\gamma_{N,s}=\frac{\Gamma(\frac{N}{2})}{4^s\Gamma(s+1)\Gamma(\frac{N}{2}+s)}$,}
\end{equation}
see e.g. \cite{G61,D12}. We may thus compute the pointwise derivative $v_s=\partial_s u_s$ with respect to $s$ as follows: 
\[
v_s(x)=u_{s}(x)\Bigl[ \ln\bigl (r^2-|x|^2)-\bigl(2\ln(2)+\psi(\frac{N}{2}+s)+\psi(s+1)\bigr)\Bigr] \qquad \text{for $x \in \Omega$.}
\]
Here $\psi:= \frac{\Gamma'}{\Gamma}$ denotes the digamma function. Clearly, $v_s$ is nonpositive in $B_r(0)$ if and only if $2\ln(r)\leq 2\ln(2)+\psi(\frac{N}{2}+s)+\psi(s+1)$. Since the digamma function $\psi$ is increasing on $(0,\infty)$ (see e.g. \cite[eq. 6.3.21]{AS64}), we find that $v_s$ is nonpositive in $B_r(0)$ for all $s \in (0,1)$ if and only if 
$2\ln(r)\leq 2\ln(2) +\psi(\frac{N}{2})-\gamma$, i.e., 
\begin{equation}
  \label{eq:necessary-sufficient}
r \le r_N:= 2 e^{\frac{1}{2}[\psi(\frac{N}{2})- \gamma]},
\end{equation}
where $\gamma:= -\psi(1)=-\Gamma'(1)$ is the Euler-Mascheroni constant. Consequently, $u_s$ is pointwisely decreasing in $s \in (0,1)$ on $\Omega=B_r(0)$ if 
and only if $r \le r_N$. 

Our aim is to derive monotonicity properties and rate of change formulas for more general bounded domains $\Omega$ and source functions $f$ where no explicit form of $u_s = \cGs_s f$ is available. Essential in this analysis is the \textit{logarithmic Laplacian} $\loglap$ introduced in \cite{CW18}, which is a weakly singular Fourier integral operator associated to the symbol $2\ln|\cdot|$. The operator $\loglap$ can be seen as the derivative of $(-\Delta)^s$ at $s=0$. More precisely, as shown in \cite[Theorem 1.1]{CW18}, if $\phi\in C^{\alpha}_c(\R^N)$ for some $\alpha>0$, then $\frac{d}{ds}\Big|_{s=0}(-\Delta)^s\phi=\loglap \phi$
in $L^p(\R^N)$ for $1< p \le \infty$. Moreover, $\loglap \phi$ admits the integral representation
\begin{equation}\label{eq:log-laplace}
\loglap \phi(x)=c_NP.V.\int_{B_1(0)}\frac{\phi(x)-\phi(x+y)}{|y|^{N}}\ dy - c_N\int_{\R^N\setminus B_1(0)}\frac{\phi(x+y)}{|y|^{N}}\ dy+\rho_N\phi(x),  
\end{equation}
where 
\begin{equation}
  \label{eq:def-c-n-rho-n}
c_N=\frac{\Gamma(\frac{N}{2})}{\pi^{N/2}}= \frac{2}{|S^{N-1}|} \qquad \text{and}\qquad \rho_N :=2\ln(2)+\psi(\frac{N}{2}) - \gamma.   
\end{equation}  
Here and in the following, $|S^{N-1}|$ denotes the $N-1$-dimensional volume of the unit sphere in $\R^N$. Moreover, as noted in \cite[Proposition 1.3]{CW18}, the value of $\loglap \phi(x)$ is also well defined by~(\ref{eq:log-laplace}) if $\phi$ is merely Dini continuous in $x$ and satisfies $\int_{\R^N}(1+|z|)^{-N}|\phi(z)|dz < \infty$.  We also use the following alternative integral representation for $\loglap$ with respect to an open subset $\Omega$ of $\R^N$ given by 
\begin{equation}\label{eq:log-laplace2}
\loglap \phi(x)=c_N\int_{\Omega}\frac{\phi(x)-\phi(y)}{|x-y|^N}\ dy-c_N\int_{\R^N\setminus \Omega}\frac{\phi(y)}{|x-y|^N}\ dy +[h_{\Omega}(x)+\rho_N]\phi(x),\quad x\in \Omega,
\end{equation}
with the function 
\begin{equation}\label{eq:h-func}
h_\Omega: \Omega \to \R, \qquad h_{\Omega}(x)=c_N \int_{B_1(x)\setminus \Omega}\frac{1}{|x-y|^N}\ dy - c_N \int_{\Omega\setminus B_1(x)}\frac{1}{|x-y|^{N}}\ dy,
\end{equation}	
see \cite[Proposition 2.2]{CW18}. As a consequence of (\ref{eq:log-laplace2}), we have 
$$
[\loglap 1_\Omega](x) = h_{\Omega}(x)+\rho_N \qquad \text{for $x \in \Omega$.}
$$
In the particular case $\Omega= B_r(0)$, $r>0$, it was proved in \cite[Lemma 4.11]{CW18} that $\inf \limits_{x \in \Omega}h_{\Omega}(x)= h_\Omega(0)=-2 \ln r$. Therefore, the condition (\ref{eq:necessary-sufficient}) can be rephrased in the form 
$$
\loglap 1_{B_r(0)} \ge 0 \quad \text{in $B_r(0)$.}
$$
It is therefore tempting to guess that ---in general open bounded sets $\Omega$--- the nonnegativity of the function 
$$
\loglap 1_\Omega \equiv  h_\Omega + \rho_N
$$
in $\Omega$ is still a necessary and sufficient condition for the solution map $s \mapsto \cGs_s 1$ to be pointwisely decreasing in $s$ on $[0,1)$. We shall give a positive answer to this question in one of our main results. More generally, we shall see that, if $f \in C^\alpha(\overline \Omega)$ is a nonnegative function for some $\alpha>0$ and $\cEO f$ denotes the trivial extension of $f$ to $\R^N$, then the nonnegativity of $\loglap \cEO f$ in $\Omega$ is a necessary and sufficient condition for the solution map $s \mapsto \cGs_s f$ to be pointwisely decreasing in $s$ on $[0,1]$. The first main result of this paper provides a derivative of the map $s \mapsto \cGs_s f$ in a suitable sense. 

\begin{thm}\label{thm1:differential}
Let $N\geq 2$, let $\Omega\subset\R^N$ be an open and bounded set of class $C^2$, and let $f\in C^{\alpha}(\overline{\Omega})$ for some $\alpha>0$. Then we have: 
\begin{enumerate}
\item[(i)] The map
$$
(0,1) \to L^\infty(\Omega), \qquad s \mapsto u_s:= \cGs_s f
$$
is of class $C^1$. Moreover, for every $s \in (0,1)$, the function $v_s:= \partial_s u_s \in L^\infty(\Omega)$ is given as the unique solution of the boundary value problem 
\begin{equation}\label{thm1:eq2}
\left\{
  \begin{aligned}
(-\Delta)^s v_s &= -\loglap \bigl(\cEO f + w_s\bigr)&& \qquad \text{in $\Omega$,}\\
v_s &= 0 && \qquad \text{in $\R^N \setminus \Omega$,}  
  \end{aligned}
\right.
\end{equation}
where $\cEO f$ denotes the trivial extension of $f$ to $\R^N$ and $w_s=[(-\Delta)^s u_s]1_{\R^N \setminus \overline{\Omega}}$, i.e., 
\begin{equation}
  \label{eq:def-w-s}
w_s(x) = \left\{
  \begin{aligned}
&0,&& \qquad \text{$x \in \overline \Omega$,}\\
&[(-\Delta)^s u_s](x)= -c_{N,s} \int_{\Omega}\frac{u_s(y)}{|x-y|^{N+2s}}dy,&& \qquad \text{$x \in \R^N \setminus \overline{\Omega}$.}    \end{aligned}
\right.
\end{equation}
\item[(ii)] If, in addition, $f \geq 0$ in $\Omega$ and $f\not\equiv0$, then $v_s < 0$ in $\Omega$ for all $s \in (0,1)$ if and only if  
  \begin{equation}
    \label{eq:loglap-cond}
\loglap \cEO f \ge 0 \qquad \text{in $\Omega$.}
  \end{equation}
\end{enumerate}
\end{thm}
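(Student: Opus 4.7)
For part~(i), the plan is to exploit the semigroup identity $(-\Delta)^{s+h}=(-\Delta)^h\circ(-\Delta)^s$ applied to the compactly supported, bounded function $u_{s_0}$. Since $(-\Delta)^{s_0}u_{s_0}$ equals $\cEO f + w_{s_0}$ as a function on all of $\R^N$, subtracting the fractional Poisson equations at parameters $s_0$ and $s_0+h$ yields, for $z_h:=u_{s_0+h}-u_{s_0}$,
\[
(-\Delta)^{s_0+h}z_h = -\bigl[(-\Delta)^h-\id\bigr](\cEO f + w_{s_0}) \qquad \text{in $\Omega$.}
\]
Dividing by $h$ and invoking \cite[Theorem~1.1]{CW18} for the operator $h^{-1}[(-\Delta)^h-\id]$ identifies the formal limit as $-\loglap(\cEO f + w_{s_0})$. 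The main technical obstacle is to make rigorous sense of this limit, since $\cEO f + w_{s_0}$ is discontinuous across $\partial\Omega$; here the $C^2$-smoothness of $\partial\Omega$ and $f\in C^\alpha(\overline\Omega)$ enter, together with the Ros-Oton--Serra boundary behavior $u_{s_0}\asymp \dist(\cdot,\partial\Omega)^{s_0}$ from \cite{RS12}, which ensures that $w_{s_0}$ is smooth on $\R^N\setminus\overline\Omega$ with integrable singularity $\sim\dist(\cdot,\partial\Omega)^{-s_0}$ near $\partial\Omega$. Using the representation~(\ref{eq:log-laplace2}) of $\loglap$ with respect to $\Omega$, one checks that $\loglap(\cEO f+w_{s_0})$ is a bounded function on $\Omega$. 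Passing the limit through $\cGs_{s_0+h}$ with the help of uniform $L^\infty$-bounds on $\cGs_s$ in a neighborhood of $s_0$ (obtained by comparison with the explicit torsion function~(\ref{eq:explicit-formula-fractional-torsion}) on an enclosing ball) then gives $z_h/h\to v_{s_0}$ in $L^\infty(\Omega)$, with $v_{s_0}$ the unique solution of~(\ref{thm1:eq2}). Continuity of $s\mapsto v_s$ in $L^\infty(\Omega)$ follows from the same argument at varying basepoints, upgrading to $s\mapsto u_s$ of class $C^1$.

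For the ``if'' direction of~(ii), the key identity is the additive splitting $\loglap(\cEO f + w_s) = \loglap\cEO f + \loglap w_s$ on $\Omega$. The strong maximum principle for $(-\Delta)^s$ forces $u_s>0$ in $\Omega$, hence $w_s<0$ on $\R^N\setminus\overline\Omega$ and $w_s\equiv 0$ on $\overline\Omega$. Inserting this into~(\ref{eq:log-laplace2}), the bulk contributions vanish and only the exterior tail survives:
\[
\loglap w_s(x) = -c_N\int_{\R^N\setminus\Omega}\frac{w_s(y)}{|x-y|^N}\,dy > 0 \qquad \text{for $x\in\Omega$.}
\]
Combined with the hypothesis $\loglap\cEO f\ge 0$, this makes the right-hand side of~(\ref{thm1:eq2}) strictly negative in $\Omega$, and the strong maximum principle for $(-\Delta)^s$ yields $v_s<0$ in $\Omega$.

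For the converse direction, suppose $\loglap\cEO f(x_0)<0$ at some $x_0\in\Omega$; the plan is to derive a contradiction by letting $s\to 0^+$. Using the asymptotics $c_{N,s}\sim s\,c_N$ and the uniform $L^\infty$-bound on $u_s$, the exterior tail function $w_s$ tends to zero uniformly on compact subsets of $\R^N\setminus\overline\Omega$, so $\loglap w_s(x_0)\to 0$. Since $(-\Delta)^s\to\id$ as $s\to 0^+$ on bounded Dini continuous functions and $\cGs_s$ has uniformly bounded operator norm on compact subintervals of $(0,1)$ bounded away from $1$, inverting~(\ref{thm1:eq2}) pointwise yields $v_s(x_0)\to -\loglap\cEO f(x_0)>0$, contradicting $v_s<0$ in $\Omega$ for all sufficiently small $s$. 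The most delicate part of the entire argument will be this $s\to 0^+$ limiting analysis, which requires passing pointwise convergence through the nonlocal inverse operator $\cGs_s$ while tracking the singular $s$-dependence of its integral kernel.
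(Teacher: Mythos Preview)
Your overall strategy for part~(i) is close in spirit to the paper's: both exploit a semigroup-type identity to reduce the difference quotient to an expression involving $\frac{(-\Delta)^h-\id}{h}$ acting on $(-\Delta)^{s_0}u_{s_0}=\cEO f + w_{s_0}$, and then identify the limit with $\loglap$. The paper does this on the inverse side, writing $\cGs_s=\cFs_s(\cEO-\cQs_s)$ with the Riesz potential $\cFs_s$ (which satisfies an honest semigroup law, Lemma~\ref{semigroup}), rather than on the forward side as you propose. Your ``if'' direction in~(ii) is the same as the paper's.

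However, there is a genuine gap in part~(i): your assertion that $\loglap(\cEO f+w_{s_0})$ is a \emph{bounded} function on $\Omega$ is false. Using the representation~\eqref{eq:log-laplace2} one finds, exactly as in Remarks~\ref{log-bound-remark-1} and~\ref{log-bound-remark-2}, that $|\loglap\cEO f(x)|\le C(1+|\ln\deltaO(x)|)$ and $|\loglap w_{s_0}(x)|\le C\,\deltaO(x)^{-s_0}$ for $x\in\Omega$, and these singularities do not cancel: the $h_\Omega(x)f(x)$ term alone already blows up logarithmically whenever $f\not\equiv 0$ on $\partial\Omega$, and the exterior tail contributes a genuine $\deltaO^{-s_0}$ singularity. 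Consequently your plan to ``pass the limit through $\cGs_{s_0+h}$ with the help of uniform $L^\infty$-bounds on $\cGs_s$'' does not work as stated; the right-hand side is not in $L^\infty(\Omega)$. The paper repairs this by working throughout in the weighted space $\{g:\deltaO^{\,s+\eps}g\in L^\infty(\Omega)\}$, proving convergence there (Lemmas~\ref{gsigma:l}--\ref{FQ:l-prelim} and Corollary~\ref{FQ:l}), and then invoking the key estimate of Lemma~\ref{k-s-eps-cor},
\[
\sup_{x\in\Omega}\int_\Omega \deltaO(y)^{-s-\eps}|x-y|^{2s-N}\,dy<\infty,
\]
to show that $\cGs_s$ maps this weighted space boundedly into $L^\infty(\Omega)$. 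This is the step your sketch is missing.

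For the converse in~(ii), your route via $v_s(x_0)\to -\loglap\cEO f(x_0)$ as $s\to 0^+$ is plausible but requires controlling $\cGs_s$ on singular data as $s\downarrow 0$, precisely where the kernel degenerates. The paper avoids this entirely: since $v_s<0$ forces $s\mapsto u_s(x)$ to be strictly decreasing on $(0,1)$, and Proposition~\ref{derivative-zero} gives $u_s\to f$ almost uniformly as $s\to 0^+$, one gets $u_s(x)<f(x)$ for all $s\in(0,1)$, whence $\loglap\cEO f(x)=-\lim_{s\to 0^+}\frac{u_s(x)-f(x)}{s}\ge 0$ directly from the same proposition. This is both shorter and uses only the difference quotient of $u_s$, not of $v_s$.
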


Before presenting applications of this result, we briefly comment on its proof. The main difficulty is given by the fact that the family of Green operators $\cGs_s f$, $s \in [0,1]$ does not obey the semigroup property $\cGs_{s+t} = \cGs_s \cGs_t$, so the differentiablity of the solution map $s \mapsto \cGs_s f$ at $s \in (0,1)$ cannot be reduced to a consideration of the case $s=0$. In particular, in the case $s>0$ we are led to also consider variants of problem \eqref{eq:prob1-poisson-fractional} with nonhomogeneous Dirichlet conditions on $\R^N\setminus \Omega$ and corresponding representation formulas for solutions via Poisson kernels.  Starting with the work of Bogdan \cite{B99}, these formulas have been derived under rather general assumptions on $\Omega$, see \cite{BKK08,nicola} and the references therein. In Section \ref{sec:solution-map-related} below, we recall these formulas as we need them in our analysis.

Under the same assumptions as in Theorem~\ref{thm1:differential}, we shall also derive the limiting properties 
$u_s \to f$ and $\frac{u_s-f}{s} \to \loglap \cEO f$ in $\Omega$ as $s \to 0^+$. However, we cannot expect convergence in $L^\infty(\Omega)$ since $u_s$ vanishes on $\partial \Omega$ for $s>0$ and $f$ does not in general. We consider instead the following type of convergence. 

\begin{defi}
\label{defi-almost-uniform-convergence}
Let $\Omega \subset \R^N$ be an open bounded set, and let $(u_n)_n$ be a sequence of real-valued functions defined on $\Omega$.  We say that $(u_n)_n$ converges  \emph{almost uniformly in $\Omega$} to a function $u: \Omega \to \R$ if 
$$
\lim_{n \to \infty}\|\deltaO^{\eps} (u_n-u) \|_{L^\infty(\Omega)}=0\qquad \text{for every $\eps>0$.}
$$
Here and in the following, $\deltaO: \R^N \to \R$ is the boundary distance function, i.e., $\deltaO(x)= \dist(x,\partial \Omega)$ for $x \in \R^N$.
\end{defi}

We note that, if $u_n \to u$ almost uniformly in a bounded open set $\Omega$, then $u_n \to u$ in $L^\infty_{loc}(\Omega)$. If moreover $\partial \Omega$ is Lipschitz, then also $u_n \to u$ in $L^p(\Omega)$ for every $p \in [1,\infty)$. 

\begin{thm}\label{thm1:differential-s-0}
Let $N\geq 2$, let $\Omega\subset\R^N$ be an open and bounded set of class $C^2$, let $f\in C^{\alpha}(\overline{\Omega})$ for some $\alpha>0$, and let $u_s:= \cGs_s f$ for $s \in (0,1)$. Then
$$
u_s \to f  \quad \text{and}\quad \quad \frac{u_s-f}{s} \to \loglap \cEO f \qquad \text{almost uniformly in $\Omega$ as $s\to 0^+$.}
$$
Moreover, if $f \geq 0$ in $\Omega$, then
\begin{equation}
  \label{eq:integral-derivative-estimate}
0 \le u_s(x) \le f(x) - \int_0^s \bigl(\cGs_t[\loglap (\cEO f)]\bigr)(x)\,dt \qquad \text{for $s \in (0,1)$, $x \in \Omega$.}
\end{equation}
\end{thm}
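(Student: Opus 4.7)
The plan is to reduce the first two limits to the $s \to 0^+$ asymptotic of $(-\Delta)^s$ established in \cite[Theorem 1.1]{CW18}, via the auxiliary function $R_s := u_s - \cEO f$, and to derive the integral estimate (\ref{eq:integral-derivative-estimate}) directly from Theorem~\ref{thm1:differential}. The key observation is that $R_s \equiv 0$ on $\R^N \setminus \Omega$ while, using $\cEO f = f$ on $\Omega$,
$$
(-\Delta)^s R_s \;=\; f - (-\Delta)^s \cEO f \;=\; -\bigl[(-\Delta)^s - I\bigr] \cEO f \qquad \text{in } \Omega,
$$
so inverting via the Green operator yields $R_s = -\cGs_s\bigl[((-\Delta)^s - I)\cEO f\bigr]$ and, after dividing by $s$, $R_s/s = -\cGs_s\bigl[\tfrac{(-\Delta)^s - I}{s}\cEO f\bigr]$. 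Since $R_s = u_s - f$ on $\Omega$, the two limit claims reduce to passing $s \to 0^+$ through $\cGs_s$ applied to these inputs.

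For these passages I would combine the pointwise convergences $[(-\Delta)^s \cEO f](x) \to f(x)$ and $\tfrac{1}{s}[(-\Delta)^s \cEO f - \cEO f](x) \to \loglap \cEO f(x)$ as $s \to 0^+$ for $x \in \Omega$ --- which are locally uniform on compact subsets of $\Omega$ by the integral representation of $(-\Delta)^s$, the H\"older regularity of $f$, and \cite[Theorem 1.1]{CW18} --- with weighted $L^\infty$ estimates on $\cGs_s$ of the form $\|\deltaO^{\eps}\cGs_s g\|_{L^\infty(\Omega)} \le C_\eps \|\deltaO^{\alpha} g\|_{L^\infty(\Omega)}$ uniform in $s \in (0,s_0)$, extractable from the Green-function bounds recalled in Section~\ref{sec:solution-map-related}. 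This upgrades local-uniform convergence of the bracketed data to the required almost uniform convergence of $R_s$ and of $R_s/s$ on $\Omega$, which is the content of the first two claims.

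For (\ref{eq:integral-derivative-estimate}) with $f \ge 0$, the maximum principle gives $u_s \ge 0$, so the integrand in (\ref{eq:def-w-s}) is nonnegative and $w_s \le 0$ on $\R^N \setminus \overline \Omega$ while $w_s \equiv 0$ on $\Omega$. Plugging $w_s$ into (\ref{eq:log-laplace2}) collapses it to
$$
\loglap w_s(x) \;=\; -c_N \int_{\R^N \setminus \Omega} \frac{w_s(y)}{|x-y|^N}\,dy \;\ge\; 0 \qquad \text{for } x \in \Omega,
$$
whence $\loglap(\cEO f + w_s) \ge \loglap \cEO f$ in $\Omega$ by linearity, and by the order-preservation of $\cGs_s$ the identity $v_s = -\cGs_s \loglap(\cEO f + w_s)$ from Theorem~\ref{thm1:differential} gives $v_s \le -\cGs_s \loglap \cEO f$ in $\Omega$. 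Integrating the fundamental theorem of calculus identity $u_s = u_{s_0} + \int_{s_0}^s v_t\,dt$ pointwise on $\Omega$ and letting $s_0 \to 0^+$ using the first claim produces the upper bound in (\ref{eq:integral-derivative-estimate}); the lower bound $u_s \ge 0$ is again the maximum principle.

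The hardest step will be producing the weighted uniform-in-$s$ estimates on $\cGs_s$ that lift pointwise / locally uniform convergence to almost uniform convergence on $\Omega$. Because $\cEO f$ jumps across $\partial \Omega$ (unless $f$ vanishes there), both $(-\Delta)^s \cEO f$ and $\loglap \cEO f$ carry logarithmic boundary singularities whose smoothing by $\cGs_s$ degrades as $s \to 0^+$, and controlling this trade-off quantitatively will require a careful analysis of the Green function of $(-\Delta)^s$ on $C^2$ domains with explicit $s$-dependence. Everything else is routine bookkeeping along the lines sketched above.
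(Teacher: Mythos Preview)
Your treatment of the integral inequality (\ref{eq:integral-derivative-estimate}) is correct and essentially identical to the paper's: the paper also uses the identity $v_s=\cGs_s\cRO\bigl(\loglap\cQs_s f-\loglap\cEO f\bigr)$ from Theorem~\ref{thm1:differential}, observes that $\loglap\cQs_s f\le 0$ on $\Omega$ (your $\loglap w_s\ge 0$, same thing since $w_s=-\cQs_s f$), integrates in $t$ from $s'$ to $s$, and lets $s'\to 0^+$ using the first limit.

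For the two limits, however, your route differs from the paper's and contains a genuine gap. A uniform weighted bound $\|\deltaO^{\eps}\cGs_s g\|_{L^\infty}\le C_\eps\|\deltaO^{\alpha}g\|_{L^\infty}$ is a \emph{stability} estimate, not a convergence statement: from $g_s/s\to-\loglap\cEO f$ in a weighted norm it would give $\cGs_s(g_s/s)-\cGs_s(-\loglap\cEO f)\to 0$, not the desired $\cGs_s(g_s/s)-(-\loglap\cEO f)\to 0$. The residual term $(\cGs_s-\id)\bigl(\loglap\cEO f\bigr)$ forces you to already know $\cGs_s h\to h$ for $h=\loglap\cEO f$, which is \emph{not} in $C^\alpha(\overline\Omega)$ (it has a logarithmic boundary blow-up, see Remark~\ref{log-bound-remark-1}). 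So you are assuming a statement at least as strong as the one being proved. There is a second, more technical, obstruction: your data $g_s=-[(-\Delta)^s-\id]\cEO f$ picks up the jump of $\cEO f$ across $\partial\Omega$ and blows up like $\deltaO^{-2s}$ there, so extracting the needed uniform-in-$s$ mapping properties of $\cGs_s$ on such inputs from the crude bound $0\le G_s\le F_s$ is far from immediate.

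The paper sidesteps both issues by \emph{not} inverting through $\cGs_s$. It uses the decomposition $\cGs_s f-f=\bigl[(\cFs_s-\id)\cEO f\bigr]\big|_\Omega-\bigl[\cHs_s f\bigr]\big|_\Omega$ (equation~(\ref{sec:thecases0-separation})), where $\cFs_s$ is the Riesz potential, a \emph{smoothing} convolution; the weighted estimate $\bigl|\tfrac{\cFs_s-\id}{s}\cEO f+\loglap\cEO f\bigr|\le Cs\,\deltaO^{-\eps}$ is then proved directly from the explicit kernel (Lemma~\ref{rn-case}(ii)), and the correction $\cHs_s f=\cFs_s\cQs_s f$ is shown to be $O(s^2\deltaO^{-\eps})$ from the pointwise bound on $\cQs_s f$ (Lemma~\ref{separate-case}). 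Because the Riesz potential regularizes rather than differentiates, no $\deltaO^{-2s}$-type boundary singularity ever appears, and no ``$\cGs_s\to\id$'' statement is needed.
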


We remark that the estimate (\ref{eq:integral-derivative-estimate}) is a consequence of Theorem~\ref{thm1:differential} and the first part of Theorem~\ref{thm1:differential-s-0}. With the help of this estimate, we will be able to give the following answer to Question (Q2) raised above.
 
\begin{thm}
\label{pointwise-decreasing}
Let $N\geq 2$, let $\Omega\subset\R^N$ be an open and bounded set, and let $f\in C^{\alpha}(\overline{\Omega})$ for some $\alpha>0$ with $f  \ge 0$ in $\Omega$. Then (\ref{eq:loglap-cond}) is a sufficient condition for $u_s:= \cGs_s f$ to be pointwisely decreasing in $\Omega$ with respect to $s \in [0,1]$. If, moreover, $\partial \Omega$ is of class $C^2$, then this condition is also necessary. 
\end{thm}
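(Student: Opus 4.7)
My plan is to separate sufficiency from necessity. For sufficiency I first treat the case where $\Omega$ is of class $C^2$ by invoking Theorem \ref{thm1:differential}, proving $v_s := \partial_s u_s \le 0$ directly, and then extend to an arbitrary bounded open $\Omega$ via an inner exhaustion by $C^2$ subdomains. Necessity (under the $C^2$ hypothesis) will follow from the limit formula supplied by Theorem \ref{thm1:differential-s-0}.

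Assume first that $\Omega$ is of class $C^2$. Theorem \ref{thm1:differential}(i) characterizes $v_s$ as the unique weak solution of \eqref{thm1:eq2}, so by the maximum principle for $\hl$ with zero Dirichlet data on $\R^N \setminus \Omega$ it suffices to show $\loglap(\cEO f + w_s) \ge 0$ in $\Omega$. The term $\loglap \cEO f$ is non-negative by the hypothesis \eqref{eq:loglap-cond}. For $\loglap w_s$, I would exploit that $u_s \ge 0$ in $\Omega$ (since $f \ge 0$), so \eqref{eq:def-w-s} gives $w_s \le 0$ on $\R^N \setminus \overline{\Omega}$ and $w_s \equiv 0$ on $\overline{\Omega}$. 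Because $w_s$ vanishes in a neighbourhood of every $x \in \Omega$, the principal value in \eqref{eq:log-laplace} is inactive at $x$ and that formula collapses to
\begin{equation*}
\loglap w_s(x) \;=\; -c_N \int_{\R^N \setminus \overline{\Omega}} \frac{w_s(y)}{|x-y|^N}\, dy \;\ge\; 0.
\end{equation*}
Hence the right-hand side of \eqref{thm1:eq2} is non-positive, so $v_s \le 0$ in $\Omega$ for every $s \in (0,1)$. Integrating this inequality, combined with the endpoint continuity $u_s \to f$ as $s \to 0^+$ from Theorem \ref{thm1:differential-s-0} and a classical local-convergence argument at $s = 1$, shows that $s \mapsto u_s(x)$ is non-increasing on $[0,1]$ for every $x \in \Omega$.

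For a general bounded open $\Omega$, I would exhaust it from the inside by $C^2$ subdomains $\Omega_n$ with $\overline{\Omega_n} \subset \Omega_{n+1} \subset \Omega$ and $\bigcup_n \Omega_n = \Omega$, and set $f_n := f|_{\overline{\Omega_n}}$. For $x \in \Omega_n$, the difference $\cEO f - \cEOn f_n = f \cdot 1_{\Omega \setminus \Omega_n}$ is non-negative and vanishes in a neighbourhood of $x$, so \eqref{eq:log-laplace} (again with no principal value at $x$) yields
\begin{equation*}
\loglap \cEO f(x) - \loglap (\cEOn f_n)(x) \;=\; -c_N \int_{\Omega \setminus \Omega_n} \frac{f(y)}{|x-y|^N}\, dy \;\le\; 0,
\end{equation*}
whence $\loglap (\cEOn f_n) \ge \loglap \cEO f \ge 0$ in $\Omega_n$. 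The $C^2$ case therefore applies to each $(\Omega_n, f_n)$ and produces a non-increasing map $s \mapsto u_s^n(x)$ on $[0,1]$ for every $x \in \Omega_n$, where $u_s^n$ is the solution on $\Omega_n$. A standard maximum-principle comparison, together with monotonicity in $n$ and uniqueness of the Dirichlet problem, yields $u_s^n \nearrow u_s$ pointwise in $\Omega$ for every $s \in [0,1]$, and passing to the limit in $u_{s_1}^n(x) \ge u_{s_2}^n(x)$ for $0 \le s_1 < s_2 \le 1$ gives the desired monotonicity.

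For the converse, assume $\Omega$ is of class $C^2$ and $s \mapsto u_s(x)$ is non-increasing on $[0,1]$ for every $x \in \Omega$. Then $u_s(x) \le f(x)$ for $s \in (0,1)$, so $s^{-1}(u_s(x) - f(x)) \le 0$. By Theorem \ref{thm1:differential-s-0} the almost uniform limit of this quotient as $s \to 0^+$ coincides with $v_0(x) = -\loglap \cEO f(x)$ (read off from \eqref{thm1:eq2} by letting $s \to 0^+$, where $w_s \to 0$), so $\loglap \cEO f \ge 0$ in $\Omega$, which is \eqref{eq:loglap-cond}. The main technical obstacle I anticipate is the computation of $\loglap w_s$ in the $C^2$ sufficiency step: the reduced representation above requires integrability of $w_s(y)|x-y|^{-N}$ at infinity, which is available from the explicit decay $|w_s(y)| \lesssim |y|^{-N-2s}$ for large $|y|$ derived from \eqref{eq:def-w-s}. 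Once this is in hand, the domain exhaustion and passage to the limit are routine monotone-convergence arguments.
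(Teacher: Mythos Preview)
Your proof is correct and follows essentially the same route as the paper: the $C^2$ sufficiency via the sign of $\loglap w_s$ is exactly how the paper establishes the relevant direction of Theorem~\ref{thm1:differential}(ii), your inheritance computation for $\loglap \cEOn f_n$ is the content of Lemma~\ref{domain-inclusion}, and your exhaustion argument with $u_s^n \nearrow u_s$ is Proposition~\ref{sec:appr-nonsm-doma-1}. The necessity argument via the difference quotient $\frac{u_s-f}{s}\le 0$ and its limit $-\loglap \cEO f$ at $s=0$ is likewise identical to the paper's, which packages this step inside the proof of Theorem~\ref{thm1:differential}(ii).
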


Here and in the following, for matters of consistency, we set $\cGs_0 f = \cEO f$ for $f \in L^\infty(\Omega)$. 

Theorem~\ref{pointwise-decreasing} follows rather directly from Theorem~\ref{thm1:differential-s-0} in the case where $\partial \Omega$ is of class $C^2$. By approximation, we shall also show the statement in the case of arbitrary open bounded sets $\Omega \subset \R^N$. In our next observation, we remark that condition (\ref{eq:loglap-cond}) is inherited via domain inclusion. 

\begin{lemma}
\label{domain-inclusion}
Let $N\geq 2$, let $\Omega, \Omega' \subset\R^N$ be open and bounded sets with $\Omega' \subset \Omega$, and let $f\in C^{\alpha}(\overline{\Omega})$ for some $\alpha>0$. Then we have 
$$
[\loglap \cEOprime f - \loglap \cEO f](x) = c_N \int_{\Omega \setminus \Omega'}\frac{f(y)}{|x-y|^N}dy \qquad \text{for $x \in \Omega'$,}
$$
Here we identify $f$ with the restriction of $f$ to $\Omega'$. In particular, if $f \ge 0$ in $\Omega$ and $\loglap \cEO f \ge 0$ in $\Omega$, then also $\loglap \cEOprime f \ge 0$ in $\Omega'$.  
\end{lemma}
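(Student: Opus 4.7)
The plan is to reduce the identity to a direct computation on the difference $g := \cEO f - \cEOprime f$, exploiting linearity of $\loglap$ and the fact that $g$ vanishes in a neighborhood of every $x \in \Omega'$.

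First I would observe that, since $\Omega' \subset \Omega$ and $f$ is identified with its restriction, the two trivial extensions agree on $\Omega'$ and outside $\Omega$, and differ only on $\Omega \setminus \Omega'$. Explicitly,
\begin{equation*}
g(y) := \cEO f(y) - \cEOprime f(y) = f(y)\, 1_{\Omega \setminus \Omega'}(y), \qquad y \in \R^N.
\end{equation*}
Because $g$ is bounded and compactly supported, it satisfies $\int_{\R^N}(1+|z|)^{-N}|g(z)|\,dz < \infty$. Moreover, for any fixed $x \in \Omega'$ we have $g \equiv 0$ on the open neighborhood $\Omega'$ of $x$, so in particular $g$ is smooth (hence Dini continuous) at $x$ with $g(x)=0$. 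Thus $[\loglap g](x)$ is well defined via the integral representation~\eqref{eq:log-laplace} recalled in the introduction.

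Next I would apply linearity and the representation~\eqref{eq:log-laplace} to $g$ at $x$. Since $g(x) = 0$, the principal value term becomes an absolutely convergent integral (the singularity at $y=0$ is cancelled by $g$ vanishing in a full neighborhood of $x$), and collecting the two pieces yields
\begin{equation*}
[\loglap g](x) = -c_N \int_{\R^N} \frac{g(x+y)}{|y|^{N}}\,dy = -c_N \int_{\R^N} \frac{g(y)}{|x-y|^{N}}\,dy = -c_N \int_{\Omega \setminus \Omega'} \frac{f(y)}{|x-y|^{N}}\,dy,
\end{equation*}
after the substitution $y \mapsto y - x$ and using the explicit form of $g$. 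Linearity of $\loglap$ applied to $\cEO f = \cEOprime f + g$ then gives
\begin{equation*}
[\loglap \cEOprime f - \loglap \cEO f](x) = -[\loglap g](x) = c_N \int_{\Omega \setminus \Omega'} \frac{f(y)}{|x-y|^{N}}\,dy,
\end{equation*}
which is the claimed identity.

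The final monotonicity statement is then immediate: if $f \ge 0$ in $\Omega$, the integral on the right-hand side is nonnegative for every $x \in \Omega'$, so the hypothesis $\loglap \cEO f \ge 0$ in $\Omega$ yields $\loglap \cEOprime f \ge 0$ in $\Omega'$. I do not foresee any real obstacle: the only point deserving care is checking that $\loglap g$ is literally computed by~\eqref{eq:log-laplace} at points of $\Omega'$ where $g$ vanishes identically in a neighborhood, which is exactly the setting covered by the extension of~\eqref{eq:log-laplace} mentioned right after its statement.
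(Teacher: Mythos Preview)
Your proof is correct. The paper takes a slightly different route: rather than applying~\eqref{eq:log-laplace} to the difference $g = \cEO f - \cEOprime f$ and using that $g$ vanishes near $x$, it applies the alternative representation~\eqref{eq:log-laplace2} separately to $\cEO f$ (relative to $\Omega$) and to $\cEOprime f$ (relative to $\Omega'$) and then subtracts. This produces an intermediate identity
\[
h_{\Omega'}(x) - h_{\Omega}(x) = c_N \int_{\Omega \setminus \Omega'} \frac{dy}{|x-y|^N}, \qquad x \in \Omega',
\]
from which the claimed formula follows after a one-line cancellation of the $f(x)$-terms. Your argument is more economical and bypasses the $h$-function entirely; the paper's route has the minor advantage of isolating the $h$-identity explicitly, which it later reuses (in the proof of Corollary~\ref{cor:application-fractional}) to pass from smooth to general bounded domains.
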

 
Let us put these results into perspective by discussing the case $f \equiv 1$, i.e. the fractional torsion problem (\ref{eq:prob1-torsion-fractional}). As noted already, in this case it follows from (\ref{eq:log-laplace2}) that 
$$
\loglap \cEO f = \loglap 1_\Omega = h_\Omega + \rho_N \qquad \text{in $\Omega$.}
$$
Moreover, $\loglap 1_{B_r(0)}$ is nonnegative in $B_r(0)$ if and only if $r \le r_N$, where $r_N$ is defined in (\ref{eq:necessary-sufficient}). Consequently, if $\Omega$ is an open subset of $B_{r_N}(0)$, then $\loglap 1_\Omega \ge 0$ in $\Omega$ by Lemma~\ref{domain-inclusion}. Together with Theorem~\ref{pointwise-decreasing}, this observation gives rise to the following corollary.

\begin{cor}
\label{inclusion-rNball}
Let $N \ge 2$, $r_N:= 2 e^{\frac{1}{2}[\psi(\frac{N}{2})- \gamma]}$, and let $\Omega \subset B_{r_N}(0)$ be an open set. Then $u_s:= \cGs_s 1$ is pointwisely decreasing in $\Omega$ with respect to $s \in [0,1]$.  
\end{cor}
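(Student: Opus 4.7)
The plan is to combine Lemma~\ref{domain-inclusion} with the sufficiency direction of Theorem~\ref{pointwise-decreasing}. For $f \equiv 1$, the condition (\ref{eq:loglap-cond}) reduces to $\loglap 1_\Omega = h_\Omega + \rho_N \ge 0$ in $\Omega$, so once this is verified, Theorem~\ref{pointwise-decreasing} yields the pointwise monotonicity of $s \mapsto \cGs_s 1$ directly. Note that since we only invoke the sufficiency assertion, no $C^2$ regularity of $\partial\Omega$ is needed.

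First I would verify the model case $\Omega = B_{r_N}(0)$. By (\ref{eq:log-laplace2}) and the definition of $h_\Omega$ in (\ref{eq:h-func}), we have $\loglap 1_{B_r(0)} = h_{B_r(0)} + \rho_N$ on $B_r(0)$. Combining this with the infimum formula from \cite[Lemma 4.11]{CW18} quoted in the excerpt, namely $\inf_{x \in B_r(0)} h_{B_r(0)}(x) = h_{B_r(0)}(0) = -2\ln r$, and with the explicit value $\rho_N = 2\ln 2 + \psi(\tfrac{N}{2}) - \gamma$ from (\ref{eq:def-c-n-rho-n}), I obtain
$$
\loglap 1_{B_r(0)}(x) \;\ge\; -2 \ln r + 2\ln 2 + \psi(\tfrac{N}{2}) - \gamma \qquad \text{for all } x \in B_r(0).
$$
Substituting $r = r_N = 2e^{\frac{1}{2}[\psi(\frac{N}{2})-\gamma]}$ makes the right-hand side equal to zero, so $\loglap 1_{B_{r_N}(0)} \ge 0$ in $B_{r_N}(0)$.

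Next, for an arbitrary open $\Omega \subset B_{r_N}(0)$, I apply Lemma~\ref{domain-inclusion} with the roles of $\Omega'$ and $\Omega$ played by $\Omega$ and $B_{r_N}(0)$, respectively, and with $f \equiv 1$ (which clearly lies in $C^\alpha(\overline{B_{r_N}(0)})$ for any $\alpha>0$). The final assertion of the lemma then transports the nonnegativity from the ball to $\Omega$, yielding $\loglap 1_\Omega \ge 0$ in $\Omega$. Finally, applying the sufficiency part of Theorem~\ref{pointwise-decreasing} gives the claim that $s \mapsto \cGs_s 1$ is pointwisely decreasing on $\Omega$ for $s \in [0,1]$.

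There is essentially no obstacle in the argument; it is a clean composition of the already-established Lemma~\ref{domain-inclusion} and Theorem~\ref{pointwise-decreasing} with the explicit minimum of $h_{B_r(0)}$. The only subtlety worth flagging is that the choice of $r_N$ makes the elementary inequality $-2\ln r + \rho_N \ge 0$ tight, so the threshold $r = r_N$ sits precisely on the boundary of the regime in which monotonicity holds, matching the necessary-and-sufficient statement for balls derived from (\ref{eq:necessary-sufficient}).
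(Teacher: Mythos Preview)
Your proof is correct and follows essentially the same approach as the paper: verify $\loglap 1_{B_{r_N}(0)} \ge 0$ via the identity $\loglap 1_{B_r(0)} = h_{B_r(0)} + \rho_N$ and the formula $\inf h_{B_r(0)} = -2\ln r$, transport this to $\Omega \subset B_{r_N}(0)$ by Lemma~\ref{domain-inclusion}, and conclude with the sufficiency part of Theorem~\ref{pointwise-decreasing}. The paper gives exactly this argument in the paragraph preceding the corollary.
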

 
We remark that this corollary is optimal in a certain sense. More precisely, if $\Omega \subset \R^N$ is an open bounded set with $B_r(0) \subset \Omega$ for some $r>r_N$, then we have $[\cGs_s 1](0)>1$ for $s>0$ sufficiently small by (\ref{eq:explicit-formula-fractional-torsion}) and the maximum principle. Thus the map $s \mapsto [\cGs_s 1](0)$ cannot be decreasing on $(0,1)$.  

In order to derive bounds for the $L^\infty$-operator norm of the Green operator $\cGs$ in an arbitrary open and bounded subset of $\R^N$, we set 
$$
h_0(\Omega) = \inf_{x \in \Omega}h_\Omega(x).
$$
Since the operator $\cGs_s$ is order preserving for $s \in (0,1]$, we find that $$
\cGs_s(\loglap 1_\Omega)= \cGs_s(h_\Omega+\rho_N) \ge (h_{0}(\Omega)+\rho_N) \cGs_s 1 = (h_{0}(\Omega)+\rho_N) u_s \qquad \text{in $\Omega$.}
$$
If $\partial \Omega$ is of class $C^2$, we may combine this estimate with (\ref{eq:integral-derivative-estimate}) to obtain 
$$
u_s(x) \le 1 - (h_0(\Omega)+\rho_N)\int_0^s u_t(x)\,dt \qquad \text{for $x \in \Omega$, $s \in (0,1)$}
$$
and therefore, by Grönwall's inequality,
$$
u_s \le e^{-s \bigl(h_0(\Omega)+\rho_N\bigr)} \qquad \text{in $\Omega$ for $s \in (0,1)$.}
$$
By approximation, we will obtain this bound also for arbitrary open bounded sets $\Omega \subset \R^N$. Recalling that $\|\cGs_s\|= \sup_{\Omega} u_s$, we therefore infer the following estimate.

\begin{cor}\label{cor:application-fractional}
Let $N \ge 2$, $\Omega\subset \R^N$ be an arbitrary open and bounded set. Then we have 
\begin{align*}
\|\cGs_s\| \leq e^{-s \bigl(h_0(\Omega)+\rho_N\bigr)}\quad \text{ for $s \in [0,1]$.}
\end{align*}

\end{cor}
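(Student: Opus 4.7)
The strategy is to first establish the bound for $C^2$ domains by inserting $f \equiv 1$ into the estimate (\ref{eq:integral-derivative-estimate}) of Theorem~\ref{thm1:differential-s-0} and then applying a Grönwall-type argument, after which I would pass to arbitrary bounded open sets via an interior approximation by smooth subdomains.

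For the $C^2$ case, set $u_s := \cGs_s 1$. Since $\loglap 1_\Omega = h_\Omega + \rho_N \ge h_0(\Omega) + \rho_N$ pointwise in $\Omega$ by (\ref{eq:log-laplace2}) and the very definition of $h_0(\Omega)$, and since $\cGs_t$ is linear and order preserving (as a consequence of the maximum principle) with $\cGs_t[c] = c\, u_t$ for any constant $c$, I obtain
\[
\cGs_t[\loglap 1_\Omega](x) \ge (h_0(\Omega) + \rho_N)\, u_t(x) \qquad \text{for $x \in \Omega$, $t \in (0,1)$.}
\]
Inserting this into (\ref{eq:integral-derivative-estimate}) with $f \equiv 1$ and writing $c := h_0(\Omega) + \rho_N$ produces $u_s(x) \le 1 - c \int_0^s u_t(x)\,dt$. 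Setting $U(s) := \int_0^s u_t(x)\,dt$, this is a linear differential inequality $U'(s) + c\,U(s) \le 1$ with $U(0) = 0$; multiplying by the integrating factor $e^{cs}$ and integrating gives $U(s) \le (1 - e^{-cs})/c$, and hence $u_s(x) = U'(s) \le 1 - c\,U(s) \le e^{-cs}$ (the case $c = 0$ being covered by continuity). Since $\|\cGs_s\| = \sup_\Omega u_s$, this is the desired estimate for $C^2$ domains; the endpoints $s = 0$ and $s = 1$ are handled by $\cGs_0 = \cEO$ and by continuity in $s$, respectively.

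To drop the smoothness assumption on $\partial\Omega$, I would fix an exhausting sequence of $C^2$ subdomains $\Omega_n \subset \Omega$ with $\Omega_n \nearrow \Omega$, and denote by $u_s^n$ the corresponding solution of the torsion problem on $\Omega_n$. A direct inspection of (\ref{eq:h-func}) reveals that enlarging the domain pointwise decreases $h$ (the first integral shrinks while the second grows), hence $h_{\Omega_n}(x) \ge h_\Omega(x)$ for $x \in \Omega_n$ and consequently $h_0(\Omega_n) \ge h_0(\Omega)$. The smooth-domain bound then yields $u_s^n \le e^{-s(h_0(\Omega_n)+\rho_N)} \le e^{-s(h_0(\Omega)+\rho_N)}$ on $\Omega_n$, and the pointwise limit $u_s^n(x) \to u_s(x)$ for $x \in \Omega$ gives the bound on $\Omega$. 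The main technical point I expect to verify carefully is precisely this last convergence: justifying $u_s^n \to u_s$ pointwise in $\Omega$ through the monotonicity of solutions in the domain together with the variational characterization of (\ref{eq:prob1-torsion-fractional}); the Grönwall step and the monotonicity $h_{\Omega_n} \ge h_\Omega$ are routine.
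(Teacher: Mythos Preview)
Your overall strategy matches the paper's exactly: establish the bound for $C^2$ domains via (\ref{eq:integral-derivative-estimate}) and a Gr\"onwall-type argument, then approximate an arbitrary $\Omega$ from inside by smooth subdomains and pass to the limit. The paper actually proves the full limit $h_0(\Omega)=\lim_n h_0(\Omega_n)$, whereas you only need and use the inequality $h_0(\Omega_n)\ge h_0(\Omega)$, which is a harmless simplification. The pointwise convergence $u_s^n(x)\to u_s(x)$ that you flag as the ``main technical point'' is precisely what the paper isolates as Proposition~\ref{sec:appr-nonsm-doma-1}(III).

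However, your Gr\"onwall step is wrong as written. From $U'(s)+cU(s)\le 1$ and $U(0)=0$ you correctly get $U(s)\le (1-e^{-cs})/c$, but this yields $1-cU(s)\ge e^{-cs}$, not $\le$; so the chain $u_s(x)\le 1-cU(s)\le e^{-cs}$ breaks at the last step. In fact the integral inequality $u_s(x)\le 1-c\int_0^s u_t(x)\,dt$ alone does \emph{not} imply $u_s(x)\le e^{-cs}$: for $c=1$ the function $\phi(s)=e^{-s}\bigl(1+\eps(s-1)\bigr)$ with $\eps\in(0,1)$ is nonnegative, continuous, satisfies $\phi(s)\le 1-\int_0^s\phi$, yet $\phi(2)=(1+\eps)e^{-2}>e^{-2}$. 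The fix is to use the stronger information you already have from Theorem~\ref{thm1:differential}: the map $s\mapsto u_s(x)$ is $C^1$ on $(0,1)$ and $\partial_s u_s=v_s\le -\cGs_s[\loglap 1_\Omega]\le -c\,u_s$ (since $\loglap\cQs_s 1\le 0$ in $\Omega$ and $\cGs_s$ is order preserving). This genuine differential inequality gives $\partial_s\bigl(e^{cs}u_s(x)\bigr)\le 0$, and combined with $u_s(x)\to 1$ as $s\to 0^+$ you obtain $u_s(x)\le e^{-cs}$.
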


The geometry of $\Omega$ enters in the estimate for $\|\cGs_s\|$ via the quantity $h_0(\Omega)$. To deduce a less sharp but more intuitive bound, we define, for $r>0$, the {\em relative $r$-density} of the set $\Omega \subset \R^N$ by 
$$
d_r(\Omega) = \sup_{x \in \Omega}d_r(x,\Omega) \qquad \text{with}\quad d_r(x,\Omega)= \frac{|B_r(x) \cap \Omega|}{|B_r|}.
$$
We shall see in Section~\ref{sec:compl-proofs-main} below that 
\begin{equation}
  \label{eq:h-0-lower-bound-rel-density}
h_0(\Omega) \ge - \frac{2}{N} \ln \Bigl(\bigl(\frac{|\Omega|}{|B_1|}+r^N[1-d_r(\Omega)]\bigr)d_r(\Omega)\Bigr)
\end{equation}
for any open and bounded $\Omega \subset \R^N$ and every $r>0$.
Therefore, we deduce the following estimate from Corollary \ref{cor:application-fractional}.

\begin{thm}\label{cor:application-fractional-relative-density}
	Let $N \ge 2$, $r>0$, and let $\Omega\subset \R^N$ be an arbitrary open and bounded set. Then
$$
\|\cGs_s\| \le e^{-s\rho_N}d_r(\Omega)^{\frac{2s}{N}} \Bigl(\frac{|\Omega|}{|B_1|}+ r^{N}(1 -d_r(\Omega))\Bigr)^{\frac{2s}{N}} \qquad \text{for $s \in [0,1]$.}
$$
\end{thm}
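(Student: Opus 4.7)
The estimate reduces to combining Corollary~\ref{cor:application-fractional}, which gives $\|\cGs_s\| \le e^{-s(h_0(\Omega)+\rho_N)}$, with the geometric lower bound~(\ref{eq:h-0-lower-bound-rel-density}) on $h_0(\Omega)$. Since $s\ge 0$, exponentiating the latter immediately yields the theorem, so the real work is to establish~(\ref{eq:h-0-lower-bound-rel-density}).

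Fix $x\in\Omega$. A direct computation against the definition~(\ref{eq:h-func}) shows that replacing the cutoff $B_1(x)$ by $B_r(x)$ only changes $h_\Omega(x)$ by an additive constant:
$$c_N \int_{B_r(x)\setminus \Omega}\frac{dy}{|x-y|^N} - c_N\int_{\Omega\setminus B_r(x)}\frac{dy}{|x-y|^N} = h_\Omega(x) + 2\ln r,$$
since the $\Omega$-contributions over the symmetric difference $B_1(x)\triangle B_r(x)$ cancel and leave a pure shell integral equal to $c_N|S^{N-1}|\ln r = 2\ln r$. Next, because $y\mapsto|x-y|^{-N}$ is radially symmetric decreasing about $x$, the bathtub principle applied to sets of prescribed measure inside, respectively outside, $B_r(x)$ gives
$$\int_{B_r(x)\setminus\Omega}\!\frac{dy}{|x-y|^N}\ge \int_{a<|x-y|<r}\!\frac{dy}{|x-y|^N},\qquad \int_{\Omega\setminus B_r(x)}\!\frac{dy}{|x-y|^N}\le \int_{r<|x-y|<R}\!\frac{dy}{|x-y|^N},$$
where matching measures yields $a^N = r^N d_r(x,\Omega)$ and $R^N = |\Omega|/|B_1| + r^N(1-d_r(x,\Omega))$. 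Computing these shell integrals via spherical coordinates, using $c_N|S^{N-1}|=2$, and subtracting $2\ln r$ from the scaling identity above leads to the pointwise estimate
$$h_\Omega(x) \ge -\frac{2}{N}\ln\Bigl[d_r(x,\Omega)\Bigl(\frac{|\Omega|}{|B_1|}+r^N(1-d_r(x,\Omega))\Bigr)\Bigr].$$

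It remains to replace $d_r(x,\Omega)$ by $d_r(\Omega)$ on the right-hand side without loss, and this is the main (mild) obstacle. Setting $A:=|\Omega|/|B_1|$, the argument of the logarithm is the concave parabola $\phi(t):=t(A+r^N(1-t))$, which is increasing on $[0,t^*]$ with $t^*:=(A+r^N)/(2r^N)$ but not on all of $[0,1]$, so monotonicity in $x$ is not automatic. The key observation is the elementary volume inequality $|\Omega|\ge d_r(\Omega)|B_r|$, obtained by picking $x_0\in\Omega$ with $d_r(x_0,\Omega)$ arbitrarily close to the supremum $d_r(\Omega)$. This gives $A\ge d_r(\Omega)r^N\ge r^N(2d_r(\Omega)-1)$ (using $d_r(\Omega)\le 1$), which is equivalent to $d_r(\Omega)\le t^*$. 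Hence $\phi(d_r(x,\Omega))\le \phi(d_r(\Omega))$ for every $x\in\Omega$, and taking the infimum in $x$ establishes~(\ref{eq:h-0-lower-bound-rel-density}) and thus the theorem.
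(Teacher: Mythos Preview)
Your proof is correct and follows essentially the same approach as the paper: both reduce to establishing (\ref{eq:h-0-lower-bound-rel-density}) via the scaling identity for $h_\Omega$ with cutoff at radius $r$, a bathtub/rearrangement comparison against annuli of matching measure, and the monotonicity of the parabola $\phi(t)=t(A+r^N(1-t))$ on $[0,t^*]$ with $t^*=(A+r^N)/(2r^N)$. You even justify explicitly why $d_r(\Omega)\le t^*$ (via $|\Omega|\ge d_r(\Omega)|B_r|$ and $d_r(\Omega)\le 1$), which the paper states without proof.
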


We point out that this result is new even in the case $s=1$ and yields the bound
\begin{equation}
  \label{eq:new-G-1-bound}
\|\cGs_1\| \le e^{-\rho_N}d_r(\Omega)^{\frac{2}{N}} \Bigl(\frac{|\Omega|}{|B_1|}+ r^{N}(1 -d_r(\Omega))\Bigr)^{\frac{2}{N}}. 
\end{equation}
This estimate should be compared with the classical bound 
\begin{equation}
  \label{talenti-bound}
\|\cGs_1\|\le \frac{1}{2N}\bigl(\frac{|\Omega|}{|B_1|} \bigr)^{\frac{2}{N}}. 
\end{equation}
which arises from Talenti's comparison principle \cite{T76}. The bound (\ref{talenti-bound}) is sharp among domains with fixed volume $|\Omega|$ and is attained on balls in $\R^N$. On the other hand, (\ref{eq:new-G-1-bound}) is better than (\ref{talenti-bound}) for domains which are ``thin'' compared to its total volume, i.e., for domains with a small relative $r$-density for some $r>0$. 

We remark that bounds for $\|\cGs_s\|$ are important when considering 
solutions $u \in L^\infty(\Omega)$ of semilinear problems of the type 
\begin{equation}
  \label{eq:semilinear-fractional}
(-\Delta)^s u = f(u) \quad \text{in $\Omega$},\qquad u = 0 \quad \text{on $\R^N \setminus \Omega$}
\end{equation}

for $s \in (0,1)$ and 
\begin{equation}
  \label{eq:semilinear-classical}
-\Delta u = f(u) \quad \text{in $\Omega$},\qquad u = 0 \quad \text{on $\partial \Omega$}
\end{equation}
with a Lipschitz nonlinearity $f: \R \to \R$. These solution correspond to fixed points of the nonlinear operator 
$$
T_s: L^\infty(\Omega) \to L^\infty(\Omega), \qquad T_s u = \cGs_s f(u).
$$
If $f$ admits a Lipschitz constant less than $\frac{1}{\|\cGs_s\|}$, then the operator $T_s$ is a contraction and therefore the problems (\ref{eq:semilinear-fractional}) and (\ref{eq:semilinear-classical}) admit a unique bounded solution.

\medskip

To close this introduction, we point out the natural question whether the differentiability of the map $s \mapsto \cGs_s f$ extends to values $s \ge 1$ under suitable assumptions on $f$ and $\Omega$. Higher order fractional Dirichlet problems and the associated Green operators $\cGs_s$, $s \ge 1$, have been studied e.g. in \cite{AJS17b,AJS16a,AJS16b,G15:2}. The main difficulty in this case is the fact that the function $w_s$ defined in (\ref{eq:def-w-s}) grows like 
$\deltaO^{-s}$ near the boundary $\partial \Omega$ and is therefore not contained in $L^1_{loc}(\R^N)$ if $s\ge 1$. We also remark that our main results are stated only for $N\geq 2$, although we expect that the statements also hold in the case $N=1$. Note that if $\Omega\subset \R$ is a bounded interval, then the corresponding Green operator has an explicit formula (see \cite{BGR61,B16,DG17,AJS16b}) and its derivative in $s$ can be computed directly for $s>0$. 


Finally, we mention the works \cite{SV17} resp. \cite{PV18}, where the $s$-dependence of boundary value problems has been considered for spectral-theoretic fractional powers of the Dirichlet and Neumann Laplacian, respectively. In particular, associated optimization problems in $s$ are studied. In this context we also mention \cite{BW20}, where the authors study an image regularization problem, which involves the variation of the parameter $s$ for the spectral-theoretic $s$-th fractional power of the Laplacian on the torus.

\medskip

The paper is organized as follows. Section \ref{sec:preliminary-estimates} is devoted to preliminary integral estimates which we shall use multiple times within our proofs. In Section \ref{sec:solution-map-related}, we collect various results on operators associated with the solution map $s \mapsto \cGs_s f$ for problem (\ref{eq:prob1-poisson-fractional}). The differentiability of the solution map $s \mapsto \cGs_s f$ at $s=0$ in suitable function spaces is then proved in Section \ref{sec:thecases0}. In Section \ref{sec:continuity} we discuss continuity properties of the solution map, and in Section \ref{sec:main-part} we prove its differentiability in the interval $s \in (0,1)$. In order to pass from domains of class $C^2$ to arbitrary bounded domains in some of our main results, we consider inner domain approximations and associated continuity properties in Section~\ref{sec:appr-nonsm-doma}. In Section~\ref{sec:compl-proofs-main},  we then complete the proofs of our main theorems. 

\subsection{Notation}\label{sec:notation}

For a set $A \subset\R^N$ and $x \in \R^N$, we define $\delta_A(x):=\dist(x,\partial A)$. Moreover, for given $r>0$, let $B_r(A)=\{x\in \R^N\;:\; \dist(x,A)<r\}$, and let $B_r(x)=B_r(\{x\})$ denote the ball of radius $r$ centered in $x\in \R^N$. If $x=0$ we also write $B_r$ instead of $B_r(0)$. Finally, we let $S^{k}=\{x\in \R^{k+1}\;:\; |x|=1\}$ denote the unit sphere in $\R^{k+1}$, $k \in \N$.

\section{Preliminary integral estimates}
\label{sec:preliminary-estimates}

This section is devoted to preliminary integral estimates which are of key importance in the proof of our main results. We start with two elementary estimates.

\begin{lemma}
\label{general-est}
We have 
$$
|\rho^{t}-\rho^{s}| \le \frac{t-s}{\eps}\bigl(\rho^{t+\eps}+\rho^{s-\eps}\bigr)\qquad \text{for $\rho,\eps>0$ and $s,t \in \R$, $s<t$.}
$$
\end{lemma}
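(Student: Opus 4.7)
The plan is to reduce the estimate to a one-variable inequality for the logarithm via the mean value theorem (or a direct integral representation), splitting into the cases $\rho\ge 1$ and $0<\rho\le 1$ in order to track signs correctly. The case $\rho=1$ is trivial since both sides vanish.

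First I would write
\[
\rho^{t}-\rho^{s} \;=\; \int_{s}^{t}\rho^{\sigma}\ln\rho\,d\sigma,
\]
so that
\[
|\rho^{t}-\rho^{s}| \;\le\; (t-s)\,\sup_{\sigma\in[s,t]}\rho^{\sigma}\,|\ln\rho|.
\]
The core of the argument is then the elementary bound $|\ln\rho|\le \frac{\rho^{\eps}-1}{\eps}$ for $\rho\ge 1$ (and $|\ln\rho|=\ln(1/\rho)\le\frac{\rho^{-\eps}-1}{\eps}$ for $0<\rho\le 1$), which follows from the convexity inequality $e^{x}\ge 1+x$ applied to $x=\pm\eps\ln\rho$.

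Next I would combine these bounds with the monotonicity of $\sigma\mapsto\rho^{\sigma}$. For $\rho\ge 1$, this map is non-decreasing, so $\rho^{\sigma}\le \rho^{t}$ on $[s,t]$, and therefore
\[
\rho^{\sigma}|\ln\rho|\;\le\;\rho^{t}\cdot\frac{\rho^{\eps}-1}{\eps}\;\le\;\frac{\rho^{t+\eps}}{\eps}.
\]
For $0<\rho\le 1$ the map is non-increasing, so $\rho^{\sigma}\le\rho^{s}$ on $[s,t]$, giving
\[
\rho^{\sigma}|\ln\rho|\;\le\;\rho^{s}\cdot\frac{\rho^{-\eps}-1}{\eps}\;\le\;\frac{\rho^{s-\eps}}{\eps}.
\]
Substituting these into the integral estimate yields $|\rho^{t}-\rho^{s}|\le\frac{t-s}{\eps}\rho^{t+\eps}$ in the first case and $|\rho^{t}-\rho^{s}|\le\frac{t-s}{\eps}\rho^{s-\eps}$ in the second, both of which are clearly dominated by $\frac{t-s}{\eps}(\rho^{t+\eps}+\rho^{s-\eps})$ since the omitted term is non-negative.

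I do not expect any real obstacle here: the only delicate point is the sign bookkeeping around $\rho=1$, which is handled cleanly by splitting into the two cases above and noting that the inequality is symmetric in the sense that one of the two terms $\rho^{t+\eps}$, $\rho^{s-\eps}$ on the right always suffices. The proof is essentially a sharpening of the mean value theorem adapted so that both extremal exponents $t+\eps$ and $s-\eps$ (rather than the interior value at which the derivative is evaluated) appear on the right-hand side, which is exactly the form needed for later applications in the paper.
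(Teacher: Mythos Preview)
Your proof is correct and follows essentially the same approach as the paper: write $\rho^{t}-\rho^{s}$ as an integral of $\rho^{\sigma}\ln\rho$, split into the cases $\rho\ge 1$ and $\rho\le 1$, bound $|\ln\rho|$ by $\frac{1}{\eps}\rho^{\pm\eps}$, and use the monotonicity of $\sigma\mapsto\rho^{\sigma}$ to land on $\rho^{t+\eps}$ or $\rho^{s-\eps}$ respectively. The only cosmetic difference is that you use the slightly sharper bound $|\ln\rho|\le\frac{\rho^{\eps}-1}{\eps}$ and take a sup over $[s,t]$, whereas the paper uses $|\ln\rho|\le\frac{\rho^{\eps}}{\eps}$ and bounds the integrand directly; the argument and outcome are the same.
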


\begin{proof}
We write 
$$
|\rho^{t}-\rho^{s}|= (t-s) |\ln \rho| \int_0^1 \rho^{s+ \tau (t-s)}d\tau. 
$$
If $\rho \ge 1$, we have $|\ln \rho| = \ln \rho \le \frac{\rho^\eps}{\eps}$,
and therefore 
$$
|\rho^{t}-\rho^{s}| \le \frac{t-s}{\eps}  \int_0^1 \rho^{s+ \tau (t-s)+\eps}d\tau \le \frac{t-s}{\eps} \rho^{t+\eps}.
$$
Moreover, for $\rho \in (0,1)$ we have $|\ln \rho| = - \ln \rho = \ln \rho^{-1} \le \frac{\rho^{-\eps}}{\eps}$ and therefore 
$$
|\rho^{t}-\rho^{s}| \le \frac{t-s}{\eps}  \int_0^1 \rho^{s+ \tau (t-s)-\eps}d\tau \le \frac{t-s}{\eps} \rho^{s-\eps}.
$$
\end{proof}

\begin{lemma}
\label{basic-estimate}
We have 
$$
\int_0^1 t^{-a} (c+ t)^{\lambda-1}\,dt \le \kappa(a,\lambda,c) \qquad \text{for $a,\lambda \in (-\infty,1)$ and $c \in (0,1]$,}
$$
where 
$$
\kappa(a,\lambda,c) :=
\left \{
  \begin{aligned}
&\min \{(\lambda-a)^{-1}, (1-a)^{-1}+|\ln c|\} &&\qquad \text{if $\lambda>a$,}\\
&c^{\lambda-a} \min\{ [(1-a)(a-\lambda)]^{-1}(1-\lambda),(1-a)^{-1} + |\ln c|\}&&\qquad \text{if $\lambda<a$,}\\
&(1-a)^{-1}+ |\ln c| && \qquad \text{if $\lambda=a$.}
  \end{aligned}
\right.
$$
\end{lemma}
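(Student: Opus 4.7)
The plan is to split the integral at $t=c$ and exploit that the function $x\mapsto x^{\lambda-1}$ is decreasing, since $\lambda<1$. On $(0,c)$ the bound $c+t\ge c$ gives $(c+t)^{\lambda-1}\le c^{\lambda-1}$, which yields
$$
\int_0^c t^{-a}(c+t)^{\lambda-1}\,dt \le c^{\lambda-1}\int_0^c t^{-a}\,dt = \frac{c^{\lambda-a}}{1-a};
$$
on $(c,1)$ the bound $c+t\ge t$ gives $(c+t)^{\lambda-1}\le t^{\lambda-1}$, and hence
$$
\int_c^1 t^{-a}(c+t)^{\lambda-1}\,dt \le \int_c^1 t^{\lambda-a-1}\,dt.
$$
The remainder of the proof is a case analysis on the sign of $\lambda-a$.

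For $\lambda=a$ the second integral equals $|\ln c|$ while the first contributes $(1-a)^{-1}$, immediately producing the stated bound. For $\lambda>a$, the first candidate $(\lambda-a)^{-1}$ is obtained most cleanly by applying $(c+t)^{\lambda-1}\le t^{\lambda-1}$ on all of $(0,1)$, so that $\int_0^1 t^{\lambda-a-1}\,dt=(\lambda-a)^{-1}$. The alternative candidate $(1-a)^{-1}+|\ln c|$ then follows from the two split estimates combined with the inequality $1-c^{\lambda-a}\le(\lambda-a)|\ln c|$, which I obtain from the elementary estimate $1-e^x\le -x$ for $x\le 0$ applied at $x=(\lambda-a)\ln c$.

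For $\lambda<a$, I rearrange the sum of the two split estimates algebraically as
$$
\frac{c^{\lambda-a}}{1-a}+\frac{c^{\lambda-a}-1}{a-\lambda} = c^{\lambda-a}\cdot\frac{1-\lambda}{(1-a)(a-\lambda)} - \frac{1}{a-\lambda};
$$
since $c^{\lambda-a}\ge 1$ and $1-\lambda\ge 1-a$, the subtracted term is dominated by the first and may be discarded, producing the first candidate bound $c^{\lambda-a}(1-\lambda)/[(1-a)(a-\lambda)]$. The second candidate is obtained from $e^x-1\le x\,e^x$ for $x\ge 0$ with $x=(a-\lambda)|\ln c|$, which gives $(c^{\lambda-a}-1)/(a-\lambda)\le c^{\lambda-a}|\ln c|$; adding this to $c^{\lambda-a}/(1-a)$ yields the bound $c^{\lambda-a}[(1-a)^{-1}+|\ln c|]$.

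The main obstacle is not mathematical depth but bookkeeping: keeping track of the signs of $\lambda-a$, of $\ln c$, and of the various exponents in order to invoke the correct monotonicity inequality at each step, and verifying that each of the two candidate upper bounds inside the $\min$ is actually recovered by a genuine route. No auxiliary result beyond the elementary inequalities $1-e^x\le -x$ for $x\le 0$ and $e^x-1\le x\,e^x$ for $x\ge 0$ is needed.
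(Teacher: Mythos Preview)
Your proof is correct. The approach is closely related to the paper's but more direct: the paper first applies the substitution $t=c\,\tau^{1/(1-a)}$ and then bounds the resulting integrand $(1+\tau^{1/(1-a)})^{\lambda-1}$ by $1$ on $[0,1]$ and by $\tau^{(\lambda-1)/(1-a)}$ on $[1,c^{a-1}]$, whereas you skip the substitution and split the original integral at $t=c$, using $(c+t)^{\lambda-1}\le c^{\lambda-1}$ on $(0,c)$ and $(c+t)^{\lambda-1}\le t^{\lambda-1}$ on $(c,1)$. Since $\tau=1$ corresponds exactly to $t=c$ under the paper's change of variables, the two decompositions are really the same; both lead to the intermediate quantity $\frac{c^{\lambda-a}}{1-a}+\frac{c^{\lambda-a}-1}{a-\lambda}$ and the same elementary logarithmic inequalities in the final case analysis. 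Your version has the advantage of avoiding the substitution entirely, and your derivation of the bound $(\lambda-a)^{-1}$ in the case $\lambda>a$ via $(c+t)^{\lambda-1}\le t^{\lambda-1}$ on all of $(0,1)$ is even shorter than the paper's route through the intermediate expression. One cosmetic remark: in the case $\lambda<a$, the justification ``the subtracted term is dominated by the first'' is unnecessary --- it suffices that $\frac{1}{a-\lambda}>0$, so dropping a positive subtracted quantity already yields an upper bound.
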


\begin{proof}
Substituting $t =  c \tau^{\frac{1}{1-a}}$, we get 
$$
\int_0^1 t^{-a} (c+ t)^{\lambda-1}\,dt = \frac{c^{\lambda-a}}{1-a}\int_0^{c^{a-1}}(1+\tau^{\frac{1}{1-a}})^{\lambda-1}\,d\tau \le \frac{c^{\lambda-a}}{1-a}\Bigl(1 + \int_1^{c^{a-1}} \tau^{\frac{\lambda-1}{1-a}}\,d\tau\Bigr),
$$
where 
$$
\int_1^{c^{a-1}} \tau^{\frac{\lambda-1}{1-a}}\,d\tau = 
\left\{
\begin{aligned}
&\frac{1-a}{\lambda-a}(c^{a-\lambda}-1),&&\qquad \lambda \not = a,\\
&(1-a)|\ln c|,&&\qquad \lambda =a.
\end{aligned}
\right. 
$$
In particular,
$$
\int_0^1 t^{-a} (c+ t)^{\lambda-1}\,dt \le \frac{1}{1-a}  +|\ln c| 
\qquad \text{if $\lambda=a$.} 
$$
Moreover, if $\lambda>a$, 
\begin{align*}
\int_0^1 t^{-a} (c+ t)^{\lambda-1}\,dt &\le \frac{c^{\lambda-a}}{1-a}\Bigl(1 + \frac{1-a}{\lambda-a}(c^{a-\lambda}-1)\Bigr)
=\frac{1-a + (\lambda-1)c^{\lambda-a}}{(1-a)(\lambda-a)} \le \frac{1}{\lambda-a} 
\end{align*}
and also 
$$
\int_0^1 t^{-a} (c+ t)^{\lambda-1}\,dt \le \frac{c^{\lambda-a}}{1-a}\Bigl(1 + \frac{1-a}{\lambda-a}(c^{a-\lambda}-1)\Bigr)
\le \frac{c^{\lambda-a}}{1-a}\bigl(1 + (1-a)|\ln c|c^{a-\lambda}\bigr)\le \frac{1}{1-a} + |\ln c|.
$$
Finally, if $\lambda<a$, we have, similarly,
$$
\int_0^1 t^{-a} (c+ t)^{\lambda-1}\,dt \le  \frac{c^{\lambda-a}}{1-a} + \frac{c^{\lambda-a}-1}{a-\lambda}
\le c^{\lambda-a} \min \Bigl\{ \frac{(1-\lambda)}{(1-a)(a-\lambda)}, \frac{1}{1-a} + |\ln c| \Bigr\}.
$$
The claim follows.
\end{proof}

The next lemma, which is the main result of this section, is concerned with integrals involving singular kernels and powers of the function $\deltaO$ for a given open bounded set $\Omega$ of class $C^2$. 

\begin{lemma}\label{int:la}
	Let $N\geq 2$, $\Omega\subset \R^N$ be an open bounded set with $C^{2}$-boundary, $a\in(-1,1)$, and $\lambda \in [-N,1)$. Moreover, for $\delta>0$, we define 
$$
m(a,\lambda,\delta):=
\frac{1}{1-a}  \left\{
  \begin{aligned}
&\frac{1}{1-\lambda} \min \Bigl\{\frac{1}{\lambda-a},1+|\ln \delta|\Bigr\} &&\qquad \text{if $\lambda>a$,}\\
&1\:+\: \delta^{\lambda-a} \min \Bigl\{\frac{1}{a-\lambda},\frac{1}{1-\lambda}(1+|\ln \delta|)\Bigr\} &&\qquad \text{if $\lambda<a$,}\\
&\frac{1}{1-\lambda}\Bigl(1+|\ln \delta|\Bigr) && \qquad \text{if $\lambda=a$.}
  \end{aligned}
\right.
$$
Then there is $C=C(N,\Omega)>0$ with 
\begin{align}
  \label{eq:(i)-int-est}
	\int_{\Omega}\deltaO(y)^{-a}|x-y|^{\lambda-N}\ dy
	&\;\leq\; C\frac{m(a,\lambda,\deltaO(x))}{1+\deltaO(x)^{N-\lambda}} \qquad \text{for $x \in \R^N \setminus \overline \Omega\:$ and}\\
  \label{eq:(ii)-int-est}
	\int_{B_{r_\Omega}(x) \setminus \Omega}\deltaO(y)^{-a}|x-y|^{\lambda-N}\ dy
	&\;\leq\; C\: m(a,\lambda,\deltaO(x)) \qquad \text{for $x \in \Omega$,}
\end{align}
where $r_\Omega:= \diam \:\Omega+1$. 
\end{lemma}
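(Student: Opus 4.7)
The plan is to exploit the $C^{2}$-regularity of $\partial\Omega$ through tubular coordinates near the boundary and to reduce both estimates to the one-variable integrand handled by Lemma~\ref{basic-estimate}. Fix $\delta_0\in(0,1)$, depending only on $\Omega$, so that the normal map $\Phi(z,t)=z-t\nu(z)$ is a bi-Lipschitz diffeomorphism from $\partial\Omega\times(-\delta_0,\delta_0)$ onto the collar $U_{\delta_0}=\{y\in\R^N:\deltaO(y)<\delta_0\}$, with Jacobian uniformly comparable to $1$ and with $\deltaO(\Phi(z,t))=|t|$. A second-order Taylor expansion of $\partial\Omega$ then yields a constant $c_0=c_0(\Omega)>0$ such that, for every $x,y\in U_{\delta_0}$ with nearest-boundary projections $\pi(x),\pi(y)\in\partial\Omega$,
$$
|x-y|\;\ge\; c_0\bigl(\deltaO(x)+\deltaO(y)+|\pi(x)-\pi(y)|\bigr),
$$
irrespective of whether $x$ and $y$ lie on the same side of $\partial\Omega$ or on opposite sides, which covers both configurations arising in \eqref{eq:(i)-int-est} and \eqref{eq:(ii)-int-est}. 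This is the main geometric input.

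For either integral, I would split the domain of integration into its intersection with the collar $U_{\delta_0}$ and its complementary bulk part. On the bulk, $\deltaO(y)^{-a}\le\delta_0^{-a}$ is bounded and $|x-y|^{\lambda-N}$ is integrable on bounded sets because $\lambda<1$, so this piece contributes only a harmless constant depending on $N$ and $\Omega$. On the collar part, change variables through $\Phi$, pull out $\deltaO(y)^{-a}=t^{-a}$, and apply the above inequality to bound $|x-y|^{\lambda-N}$ from above by $C(\deltaO(x)+t+|\pi(x)-z|)^{\lambda-N}$. Passing to polar-type coordinates on $\partial\Omega$ around $\pi(x)$, so that $d\sigma(z)\le C r^{N-2}\,dr$ with $r=|\pi(x)-z|$, the $r$-integration gives
$$
\int_0^\infty r^{N-2}\bigl(\deltaO(x)+t+r\bigr)^{\lambda-N}\,dr\;=\; C(N,\lambda)\,\bigl(\deltaO(x)+t\bigr)^{\lambda-1},
$$
which is finite thanks to $N\ge 2$ and $\lambda<1$. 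This leaves the one-dimensional integral $\int_0^{\delta_0}t^{-a}(\deltaO(x)+t)^{\lambda-1}\,dt$, which by Lemma~\ref{basic-estimate} is dominated by $\kappa(a,\lambda,\deltaO(x))$. Comparing the three cases $\lambda>a$, $\lambda<a$, $\lambda=a$ in the definitions of $\kappa$ and $m$, the factors $(1-a)^{-1}$, $(1-\lambda)^{-1}$ and the absolute constants can all be absorbed into a single $C=C(N,\Omega)$, which proves \eqref{eq:(ii)-int-est}.

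To obtain the additional decay factor $1/(1+\deltaO(x)^{N-\lambda})$ in \eqref{eq:(i)-int-est}, I would split on the size of $\deltaO(x)$. For $\deltaO(x)\le 1$ the factor is bounded below by $1/2$ and the previous argument applies directly. For $\deltaO(x)>1$, since $x\notin\overline\Omega$ we have $|x-y|\ge\deltaO(x)$ for every $y\in\Omega$, hence $|x-y|^{\lambda-N}\le\deltaO(x)^{\lambda-N}$. The $C^{2}$-regularity of $\partial\Omega$ together with $a<1$ ensures $\int_\Omega\deltaO(y)^{-a}\,dy<\infty$, producing a bound of the form $C\deltaO(x)^{\lambda-N}$; in each of its three cases, $m(a,\lambda,\cdot)$ is either bounded or grows only logarithmically at infinity, so this bound is absorbed into $Cm(a,\lambda,\deltaO(x))/(1+\deltaO(x)^{N-\lambda})$ as required.

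The main obstacle is establishing the uniform geometric inequality in the opposite-side configuration, together with the bookkeeping of constants needed to match the three-case structure of $m$ against the three-case output of Lemma~\ref{basic-estimate}. Once the collar decomposition and the geometric bound are in place, the remainder is routine change of variables.
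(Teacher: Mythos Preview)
Your proposal is correct and follows essentially the same route as the paper's proof: tubular coordinates near $\partial\Omega$, the geometric inequality $|x-y|\ge c_0(\deltaO(x)+\deltaO(y)+|\pi(x)-\pi(y)|)$ (the paper obtains it with $c_0=1/3$), integration over $\partial\Omega$ in local polar coordinates to reduce to $\int_0^{\delta_0} t^{-a}(\deltaO(x)+t)^{\lambda-1}\,dt$, and then Lemma~\ref{basic-estimate}; the far-field decay in \eqref{eq:(i)-int-est} is handled in both arguments by the crude bound $|x-y|\ge\deltaO(x)$ together with $\int_\Omega\deltaO^{-a}\,dy\le C/(1-a)$.

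One small correction: the geometric inequality you state is \emph{false} when $x$ and $y$ lie on the same side of $\partial\Omega$ (take $x=y$ with $\deltaO(x)>0$). This is harmless, however, since in both \eqref{eq:(i)-int-est} and \eqref{eq:(ii)-int-est} the points $x$ and $y$ are on opposite sides of $\partial\Omega$, where the inequality does hold (and this is the only case the paper uses); your own concluding remark about ``the opposite-side configuration'' being the key case is the correct framing.
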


\begin{proof}
We only prove (\ref{eq:(i)-int-est}), the proof of (\ref{eq:(ii)-int-est}) is very similar. In the following, the letter $C$ stands for positive constants depending only on $\Omega$ and the dimension $N$ but not on $a$ and $\lambda$. The value of $C$ may increase in every step of the estimate. 
Since $\Omega$ is of class $C^2$, there exist $\eps= \eps(\Omega) \in (0,1)$ and a diffeomorphism $T: \partial \Omega \times 
(-\eps,\eps) \to B_\eps(\partial \Omega)$ with bounded derivative and the property that 
\begin{equation}\label{int:la-propT}
T^{-1}(x)= \left \{
  \begin{aligned}
  &(p(x),\deltaO(x)),&& \qquad x \in B_\eps(\partial \Omega) \cap  \overline \Omega,\\ 
  &(p(x),-\deltaO(x)),&& \qquad x \in B_\eps(\partial \Omega) \setminus \overline \Omega, 
  \end{aligned}
\right.
\end{equation}
where $p(x)$ is the projection of $x \in B_\eps(\partial \Omega)$ onto $\partial \Omega$, i.e., the unique point in $\partial \Omega$ with $|x-p(x)|= \deltaO(x)$. In particular, this implies that 
$$
T(\partial \Omega \times 
(0,\eps)) = \Omega \cap B_\eps(\partial \Omega) \quad \text{and}\quad 
T(\partial \Omega \times 
(-\eps,0)) = B_\eps(\partial \Omega) \setminus \overline \Omega.
$$
Now, we first consider $x \in \R^N \setminus B_\eps(\Omega)$. Then 
$|x-y|^{\lambda-N} \le \frac{C}{1+\deltaO(x)^{N-\lambda}}$ for every $y \in \Omega$ and therefore 
\begin{equation}
\label{first-long-lemma-first-est-zero}
\int_{\Omega} \deltaO(y)^{-a} |x-y|^{\lambda-N}dy  \le 
\frac{C}{1+\deltaO(x)^{N-\lambda}} 
\int_\Omega \deltaO(y)^{-a}dy,
\end{equation}
where 
$$
\int_\Omega \deltaO(y)^{-a}dy \le \eps^{-a} |\Omega| + 
\int_{B_\eps(\partial \Omega) \cap \Omega}\deltaO(y)^{-a}dy\\
\le C + \int_{\partial \Omega \times [0,\eps]} t^{-a} T^*(dy)(z,t).
$$
Here $T^*(dy)$ denotes the pull-back of the volume element $dy$. Since $T$ has a bounded derivative, we conclude that 
\begin{equation}
  \label{eq:est-add-revision}
\int_\Omega \deltaO(y)^{-a}dy \le  C \Bigl(1+ |\partial \Omega| 
\int_{0}^\eps t^{-a}dt\Bigr)\le \frac{C}{1-a}
\end{equation}
and therefore, by (\ref{first-long-lemma-first-est-zero}), 
\begin{equation}
\label{first-long-lemma-first-est}
\int_{\Omega} \deltaO(y)^{-a} |x-y|^{\lambda-N}dy  \le \frac{C}{(1-a)\bigl(1+
\deltaO(x)^{N-\lambda}\bigr)}.
\end{equation}
Next, we consider $x \in B_\eps(\Omega) \setminus \Omega$. Then
\begin{equation}
\int_{\Omega \setminus B_\eps(\partial \Omega)} \deltaO(y)^{-a} |x-y|^{\lambda-N}dy
\le \eps^{\lambda-N-a}|\Omega| \le C \le \frac{C}{(1-a)\bigl(1+\deltaO(x)^{N-\lambda}\bigr)}, 
\label{first-long-lemma-second-est}
\end{equation}
so it remains to estimate the integral 
$$
\int_{\Omega \cap B_\eps(\partial \Omega)} \deltaO(y)^{-a} |x-y|^{\lambda-N}dy.
$$
Since $x \in B_\eps(\partial \Omega) \setminus \Omega$, we have
 $|x-y| \geq \deltaO(x)+\deltaO(y)$ and
	$$
|x-y|\geq |p(x)-y|-\deltaO(x)\geq |p(x)-p(y)|-\deltaO(x)-\deltaO(y)\qquad \text{for $y \in B_\eps(\partial \Omega) \cap \Omega$,} 
$$ 
which implies that
$$	
3|x-y|\geq |p(x)-p(y)|+\deltaO(x)+\deltaO(y)\qquad \text{for $y \in B_\eps(\partial \Omega) \cap \Omega$.}
$$
Consequently, for $x \in B_\eps(\Omega) \setminus \Omega$ we have
\begin{align}
\int_{B_\eps(\partial \Omega) \cap \Omega}& \deltaO(y)^{-a} |x-y|^{\lambda-N}dy \le 
3^{N-\lambda} \int_{B_\eps(\partial \Omega) \cap \Omega} \deltaO(y)^{-a}[|p(x)-p(y)|+\deltaO(y) + \deltaO(x)]^{\lambda-N}dy  \nonumber\\
&\le C \int_{\partial \Omega \times [0,\eps]}t^{-a}[t + \deltaO(x)+ |p(x)-z|]^{\lambda-N} T^*(dy)(z,t) \nonumber \\
&\le C \int_0^\eps t^{-a} 
\int_{\partial \Omega}[t + \deltaO(x)+ |p(x)-z|]^{\lambda-N} d\sigma(z)dt
\label{first-long-lemma-third-est-a}
\end{align}
Next we use the fact that, since $\partial \Omega$ is of class $C^2$, there exists $\mu=\mu(\Omega)>0$ and, for each $z \in \partial \Omega$, a $C^1$-Diffeomorphism 
$$
K_z: B_\mu(0) \subset \R^{N-1} \to B_\mu(z) \cap \partial \Omega
$$
with the property that 
$$
|K_z(v)-z|= |v| \qquad \text{for every $v \in B_\mu(0)$}
$$
and so that its partial derivatives remain bounded on $B_\mu(0)$ independently of $z$. Consequently, for every $t \in (0,\eps)$, 
\begin{align*}
&\int_{\partial \Omega}[t + \deltaO(x)+ |p(x)-z|]^{\lambda-N} d\sigma(z)
= \int_{\partial \Omega \cap B_\mu(p(x))}[t + \deltaO(x)+ |p(x)-z|]^{\lambda-N}d\sigma(z) + \int_{\partial \Omega \setminus B_\mu(p(x))}\dots\\
&\le C \int_{B_\mu(0)}[t + \deltaO(x)+ |v|]^{\lambda-N} dv + \mu^{\lambda-N}|\partial \Omega| \le C \int_{0}^\mu r^{N-2}[t + \deltaO(x)+r]^{\lambda-N}
dr + C\\
&= C(t+\deltaO(x))^{\lambda-1} \int_{0}^{\frac{\mu}{t+\deltaO(x)}}\tau^{N-2} \bigl(1 +\tau)^{\lambda-N}
d\tau+C\\
&\le  C (t+\deltaO(x))^{\lambda-1}\Bigl(1+  \int_{1}^{\infty} \tau^{\lambda-2}d\tau \Bigr)+ C \le  \frac{C}{1-\lambda}(t+\deltaO(x))^{\lambda-1}+C
\end{align*}
and therefore
\begin{align}
&\int_0^\eps t^{-a} 
\int_{\partial \Omega}[t + \deltaO(x)+ |p(x)-z|]^{\lambda-N} d\sigma(z)dt \label{first-long-lemma-third-est-b}
\\
&\le C \Bigl( \int_0^\eps t^{-a}\,dt + \frac{1}{1-\lambda}\int_0^\eps t^{-a}(t+\deltaO(x))^{\lambda-1}\,dt \Bigr) +C \le \frac{C}{1-a} + \frac{\kappa(a,\lambda,\deltaO(x))}{1-\lambda}\nonumber
\end{align}
by Lemma~\ref{basic-estimate}. Since 
$$
\kappa(a,\lambda,\deltaO(x))= 
\left \{
  \begin{aligned}
&\min \{(\lambda-a)^{-1}, (1-a)^{-1}+|\ln c|\}, &&\quad \text{if $\lambda>a$,}\\
&\deltaO^{\lambda-a}(x)\min\{ [(1-a)(a-\lambda)]^{-1}(1-\lambda),(1-a)^{-1} + |\ln \deltaO(x)|\}, &&\quad \text{if $\lambda<a$,}\\
&(1-a)^{-1}+|\ln \deltaO(x)|\bigr)\le C(1-a)^{-1}\bigl(1+|\ln \deltaO(x)|\bigr), && \quad \text{if $\lambda=a$,}
  \end{aligned}
\right.
$$
it follows that 
\begin{equation}
\int_0^\eps t^{-a} 
\int_{\partial \Omega}[t + \deltaO(x)+ |p(x)-z|]^{\lambda-N} d\sigma(z)dt  \le C \, m(a,\lambda,\deltaO(x)).
 \label{first-long-lemma-final-est}
\end{equation}
The claim now follows by combining (\ref{first-long-lemma-first-est}), (\ref{first-long-lemma-second-est}), (\ref{first-long-lemma-third-est-b}) and (\ref{first-long-lemma-final-est}) and by making $C$ larger if necessary. 
\end{proof}

\begin{cor}\label{est:l-new}
Let $N\geq 2$, $\Omega\subset \R^N$ be an open bounded set with $C^{2}$-boundary, $a \in(-1,1)$, $b \in [0,2]$, $\lambda \in [-N,1)$ and $\nu \in [\lambda-N,b]$. Then there is $C=C(N,\Omega)>0$ such that, for all $x\in \Omega$,
$$
	\int_{\R^N\backslash \Omega}\frac{|x-y|^{\nu}}{\deltaO(y)^{a}+\deltaO(y)^{N+b}}\ dy	\leq C \, m(a,\lambda,\deltaO(x)),
$$
where $m(\cdot,\cdot,\cdot)$ is defined in Lemma~\ref{int:la}.
\end{cor}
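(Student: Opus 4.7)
The plan is to reduce the estimate to Lemma~\ref{int:la}(ii) by splitting $\R^N\setminus\Omega$ into regions where one of the two summands of the denominator dominates. Setting $r_\Omega := \diam \Omega + 1$, I would decompose into $A_1 \cup A_2 \cup A_3$, where
\[
A_1 = \{y \in B_{r_\Omega}(x) \setminus \Omega : \deltaO(y) \le 1\},\quad A_2 = \{y \in B_{r_\Omega}(x) \setminus \Omega : \deltaO(y) > 1\},
\]
and $A_3 = \{y \in \R^N\setminus\Omega : |x-y| \ge r_\Omega\}$. The elementary inequality $\deltaO(y) \le |x-y|$, valid for $x \in \Omega$ and $y \in \R^N\setminus\Omega$ because the segment from $x$ to $y$ must cross $\partial\Omega$, will be used repeatedly.

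The bulk of the work lies in estimating the integral on $A_1$. Since $a < 1 \le N+b$ and $\deltaO(y)\le 1$, we have $\deltaO(y)^a+\deltaO(y)^{N+b}\ge \deltaO(y)^a$. Moreover, from $|x-y|\le r_\Omega$ and $\nu\ge \lambda - N$, one writes $|x-y|^\nu = |x-y|^{\lambda-N}\,|x-y|^{\nu-\lambda+N} \le r_\Omega^{\nu-\lambda+N}|x-y|^{\lambda-N}$, where the prefactor is controlled by a constant $C=C(N,\Omega)$ since $\nu-\lambda+N \in [0, 2N+2]$. Applying Lemma~\ref{int:la}(ii) yields
\[
\int_{A_1}\frac{|x-y|^\nu}{\deltaO(y)^a+\deltaO(y)^{N+b}}\,dy \;\le\; C\int_{B_{r_\Omega}(x)\setminus\Omega}\deltaO(y)^{-a}|x-y|^{\lambda-N}\,dy \;\le\; C\,m(a,\lambda,\deltaO(x)).
\]

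The contributions from $A_2$ and $A_3$ reduce to absolute constants. On $A_2$, the denominator exceeds $1$, and $|x-y|\ge \deltaO(y) > 1$ together with $\nu\le b$ gives $|x-y|^\nu \le r_\Omega^b$; since $|A_2|\le |B_{r_\Omega}|$, the contribution is bounded. On $A_3$, the bound $\deltaO(y) \ge |x-y|-\diam\Omega$ is comparable to $|x-y|$ once $|x-y|\ge 2\diam\Omega$, so using $\deltaO(y)^{N+b}$ as the lower bound for the denominator produces a tail integrand of size $C|x-y|^{\nu-N-b}$ whose integral over $A_3$ is controlled by a dimensional constant. Both of these constant contributions are absorbed into $C\,m(a,\lambda,\deltaO(x))$ thanks to the elementary lower bound $m(a,\lambda,\delta)\ge c(N)>0$, which one obtains uniformly in the parameters by direct inspection of the piecewise definition (using $1-a\in(0,2)$ and $1-\lambda\in(0,N+1]$). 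I foresee no serious technical obstacle beyond careful bookkeeping of constants in the three regimes, making sure they depend only on $N$ and $\Omega$ and not on the individual parameters $a$, $\lambda$, $\nu$, $b$.
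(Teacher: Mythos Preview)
Your approach is correct and essentially the same as the paper's. The only difference is that the paper splits $\R^N\setminus\Omega$ into just two regions, $B_{r_\Omega}(x)\setminus\Omega$ and $\R^N\setminus B_{r_\Omega}(x)$, because the inequality $\deltaO(y)^{a}+\deltaO(y)^{N+b}\ge \deltaO(y)^{a}$ holds for all $y$ (both summands are positive), making your further split into $A_1$ and $A_2$ unnecessary; otherwise the argument is identical.
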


\begin{proof}
Let $x \in \Omega$. We split 
$$
	\int_{\R^N\backslash \Omega}\frac{|x-y|^{\nu}}{\deltaO(y)^{a}+\deltaO(y)^{N+b}}\ dy = 
	\int_{B_{r_\Omega}(x) \backslash \Omega}\dots dy + 
\int_{\R^N \setminus B_{r_\Omega}(x)}\dots dy
$$
with $r_\Omega:= \diam \:\Omega +1$ as before. In the following, the letter $C$ stands for positive constants depending only on $N$ and $\Omega$.
For $y \in \R^N \setminus B_{r_\Omega}(x)$ we have $\deltaO(y) \ge C |x-y|$ and thus
$$
\frac{|x-y|^{\nu}}{\deltaO(y)^{a}+\deltaO(y)^{N+b}}\le C |x-y|^{\nu-N-b}. 
$$
Therefore, since $r_\Omega \ge 1$ and $\nu \le b$, 
$$
\int_{\R^N \setminus B_{r_\Omega}(x)}\frac{|x-y|^{\nu}}{\deltaO(y)^{a}+\deltaO(y)^{N+b}}\, dy \le C \int_{r_\Omega}^\infty 
\tau^{\nu-1-b}d\tau = \frac{r_\Omega^{\nu-1-b}}{1+b-\nu} \le r_\Omega^{\nu-1-b}\le 1 \le C m(a,\lambda,\deltaO(x)).
$$
Moreover, by our assumptions on $\nu$, $\lambda$ and $b$, we have
$$
|x-y|^{\nu} \le r_\Omega^{\nu+ N-\lambda} |x-y|^{\lambda-N} \le r_\Omega^{2+ 2N}  |x-y|^{\lambda-N}
\qquad \text{for $y \in B_{r_\Omega}(x)$}
$$
and therefore, by Lemma~\ref{int:la}, 
$$ 
\int_{B_{r_\Omega}(x) \backslash \Omega} \frac{|x-y|^{\nu}}{\deltaO(y)^{a}+\deltaO(y)^{N+b}}\ dy
\le C 
\int_{B_{r_\Omega}(x) \backslash \Omega}\deltaO(y)^{-a}|x-y|^{\lambda-N} dy \le C m(a,\lambda,\deltaO(x)).
$$
\end{proof}

In Section~\ref{sec:thecases0} below, we will also need the following integral estimate.

\begin{lemma}
\label{k-s-eps-cor}
Let $N\geq 2$, and let $\Omega\subset \R^N$ be an open bounded set with $C^{2}$-boundary. For $s \in (0,1)$ and $\eps \in (0,\min \{s,1-s\})$ we have 
$$
K_{s,\eps}(\Omega):= \sup_{x \in \Omega} \int_\Omega \deltaO^{-s-\eps}(y) |x-y|^{2s-N}\ dy \le 
\frac{C}{(\min\{s,1-s\}-\eps)^3}.
$$
with a constant $C=C(N,\Omega)>0$.
\end{lemma}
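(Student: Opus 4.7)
Plan. Introduce $a := s+\eps \in (0,1)$ (since $\eps < 1-s$), $\lambda := 2s \in (0,2)$, and $\sigma := \min\{s,1-s\} - \eps > 0$; the target bound reads $K_{s,\eps}(\Omega) \le C/\sigma^3$. The obstacle for a direct application of Lemma~\ref{int:la} is that $\lambda = 2s$ may exceed $1$. To circumvent this, I would use the inequality $|x-y|^{2s-N} \le r_\Omega^{2s-\lambda'}|x-y|^{\lambda'-N}$, valid for $|x-y| \le r_\Omega = \diam\Omega + 1$ and any $\lambda' \le 2s$, and thereby reduce the problem to estimating $\int_\Omega \deltaO(y)^{-a}|x-y|^{\lambda'-N}\,dy$ for a judiciously chosen $\lambda' \in (a,1)$.

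The next step is to establish the interior analog of Lemma~\ref{int:la}(i): for every $x \in \Omega$, $a \in (-1,1)$ and $\lambda' \in [-N,1)$,
\begin{equation}\label{eq:planInteriorEst}
\int_\Omega \deltaO(y)^{-a}|x-y|^{\lambda'-N}\,dy \;\le\; C\, m(a,\lambda',\deltaO(x)),
\end{equation}
with $C = C(N,\Omega)$ and $m$ as in Lemma~\ref{int:la}. This follows the same scheme as the proof of Lemma~\ref{int:la}(i): the contribution of $\Omega \setminus B_{\eps_0}(\partial\Omega)$ is controlled by $\int_\Omega |x-y|^{\lambda'-N}\,dy \le C/\lambda'$ (using $\lambda'>0$), the contribution of the ball $B_{\deltaO(x)/2}(x) \subset \Omega$ (where $\deltaO(y) \ge \deltaO(x)/2$) is bounded by $C\deltaO(x)^{\lambda'-a}/\lambda'$, and the remaining near-boundary part is handled via the $C^2$-chart $T: \partial\Omega \times (-\eps_0,\eps_0) \to B_{\eps_0}(\partial\Omega)$ used in the proof of Lemma~\ref{int:la}. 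The crucial difference with Lemma~\ref{int:la}(i) is that the chart lower bound becomes $|x-T(z,t)| \gtrsim \sqrt{|p(x)-z|^2+(\deltaO(x)-t)^2}$; integrating $z$ over $\partial\Omega$ then produces the factor $|\deltaO(x)-t|^{\lambda'-1}/(1-\lambda')$, and splitting the resulting $t$-integral at $t = \deltaO(x)$ and invoking Lemma~\ref{basic-estimate} yields the upper bound $C\,m(a,\lambda',\deltaO(x))$.

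With (\ref{eq:planInteriorEst}) in hand, I would choose $\lambda'$ according to the parameter regime. When $\eps \le 3s-1$ (which includes every $s \ge 1/2$, since there $3s-1 \ge 1-s > \eps$), set $\lambda' = (1+a)/2 \in (a,1)$, so that $\lambda' \le 2s$ and $1-\lambda' = \lambda'-a = (1-a)/2$. Using the branch $\lambda' > a$ of $m$, one obtains $m(a,\lambda',\deltaO(x)) \le C/(1-a)^3 = C/(1-s-\eps)^3$, and since $1-s-\eps \ge \sigma$ (with equality for $s \ge 1/2$, while $1-s-\eps \ge s-\eps = \sigma$ for $s \le 1/2$), this is $\le C/\sigma^3$. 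In the complementary case $\eps > 3s-1$ (forcing $s < 1/2$ and hence $\sigma = s-\eps$), set $\lambda' = 2s < 1$; then $m \le C/[(1-s-\eps)(1-2s)(s-\eps)]$, and the inequalities $1-2s > s-\eps$ (from $\eps > 3s-1$) and $1-s-\eps \ge 1-2s$ (from $\eps < s$) together give $(1-s-\eps)(1-2s)(s-\eps) \ge (s-\eps)^3 = \sigma^3$, so again $m \le C/\sigma^3$. Multiplication by the bounded prefactor $r_\Omega^{2s-\lambda'}$ from the initial reduction then yields the claim.

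The main obstacle is the interior estimate (\ref{eq:planInteriorEst}): although the bulk of the argument parallels the proof of Lemma~\ref{int:la}(i), the singularity of $|x-y|^{\lambda'-N}$ at $y=x$ (absent in (i), where $x \in \R^N \setminus \overline\Omega$) must be absorbed into the separate contribution from $B_{\deltaO(x)/2}(x)$, and the chart-based reduction relies on the $C^2$-regularity of $\partial\Omega$ to secure the uniform lower bound $|x-T(z,t)| \gtrsim \sqrt{|p(x)-z|^2+(\deltaO(x)-t)^2}$ on a neighborhood of $\partial\Omega$. Once this is in place, the remainder of the proof amounts to careful bookkeeping of the inequalities relating $\sigma$, $s-\eps$, $1-s-\eps$ and $1-2s$.
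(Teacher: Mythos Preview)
Your plan is correct and would carry through, but it follows a genuinely different route from the paper. The paper avoids proving an interior analog of Lemma~\ref{int:la}(i) altogether: after the same reduction $|x-y|^{2s-N}\le r_\Omega^{2s-\lambda}|x-y|^{\lambda-N}$ (with the single choice $\lambda=\frac12\bigl(s+\eps+\min\{2s,1\}\bigr)$, so that $\lambda-a=1-\lambda=\sigma/2$ uniformly), it splits off the ball $B_{\deltaO(x)/2}(x)$ exactly as you do, and then, for $x$ in a tubular neighbourhood of $\partial\Omega$, reflects $x$ across the boundary to a point $\bar x\in\R^N\setminus\Omega$ with $\deltaO(\bar x)=\deltaO(x)$ and $|x-\bar x|=4r_x$. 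Since $|\bar x-y|\le 5|x-y|$ for $y\in\Omega\setminus B_{r_x}(x)$, the remaining integral is bounded by $5^N\int_\Omega\deltaO(y)^{-a}|\bar x-y|^{\lambda-N}\,dy$, to which the already proven exterior estimate \eqref{eq:(i)-int-est} applies directly. No new chart computation is needed.

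Your approach instead re-runs the chart argument on the interior side. Two remarks on that: first, the $t$-integral you arrive at is $\int_0^{\eps_0}t^{-a}|\deltaO(x)-t|^{\lambda'-1}\,dt$, and only its $t>\deltaO(x)$ half reduces (by the substitution $\tau=t-\deltaO(x)$) to the form treated in Lemma~\ref{basic-estimate}; the $t<\deltaO(x)$ half is a Beta integral $\deltaO(x)^{\lambda'-a}B(1-a,\lambda')$, which is not literally covered by Lemma~\ref{basic-estimate} and gives an extra $1/\lambda'$ contribution rather than the precise $m(a,\lambda',\deltaO(x))$. This is harmless for the final $C/\sigma^3$ bound with your choices of $\lambda'$, but the statement ``invoking Lemma~\ref{basic-estimate} yields $C\,m(a,\lambda',\deltaO(x))$'' is too optimistic as written. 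Second, your case split $\eps\le 3s-1$ versus $\eps>3s-1$ works but is unnecessary: the paper's uniform choice $\lambda=\frac12(a+\min\{2s,1\})$ handles both regimes at once. The reflection trick is shorter and exploits that the hard work has already been done in Lemma~\ref{int:la}(i).
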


\begin{proof}
For $s$, $\eps$ as above, we fix $\lambda= \frac{1}{2}(s+\eps+\min\{2s,1\}) \in (s+\eps, \min \{2s,1\})$. Moreover, we let $C$ denote constants depending only on $\Omega$ and $N$ in the following. Since $|x-y| \le \diam\: \Omega < \infty$ for $x,y \in \Omega$, we have, by Lemma \ref{int:la}, 
\begin{equation}
  \label{eq:K-s-eps-old-est}
  K_{s,\eps}(\Omega)\le [\diam\: \Omega]^{2s-\lambda} \sup_{x \in \Omega} \int_\Omega \deltaO^{-s-\eps}(y) |x-y|^{\lambda-N}\ dy \le r_\Omega^{2}
\sup_{x \in \Omega} \int_\Omega \deltaO^{-s-\eps}(y) |x-y|^{\lambda-N}\ dy
\end{equation}
with $r_\Omega$ given in Lemma~\ref{int:la}. For $x \in \Omega$, we write $r_x:= \frac{\deltaO(x)}{2}$ and 
  \begin{equation}
    \label{eq:lemma-2-5-integral-splitting}
  \int_\Omega \deltaO^{-s-\eps}(y) |x-y|^{\lambda-N}\ dy = \int_{B_{r_x}(x)} \dots \ dy + \int_{\Omega \setminus B_{r_x}(x)} \dots \ dy.
  \end{equation}
  Since $\deltaO(y) \ge r_x$ for $y \in B_{r_x}(x)$, we have
  \begin{equation}
    \label{eq:lemma-2-5-esasy-est}
  \int_{B_{r_x}(x)} \deltaO^{-s-\eps}(y) |x-y|^{\lambda-N}\ dy \le r_x^{-s-\eps} \int_{B_{r_x}(0)}|z|^{\lambda-N}\,dz = \frac{r_x^{\,\lambda-s-\eps}}{\lambda} \le \frac{r_\Omega}{s+\eps}.
  \end{equation}
  To estimate the second integral in (\ref{eq:lemma-2-5-integral-splitting}), we note that, since $\Omega$ is a bounded domain of class $C^2$, there exists $\rho \in (0,1)$ and a unique reflection map
  $$
  B_\rho(\partial \Omega) \to   B_\rho(\partial \Omega), \qquad x \mapsto \bar x
  := x -2(p(x)-x),
  $$
  at the boundary $\partial \Omega$. Here, as in the proof of Lemma~\ref{int:la}, $p(x)$ is the projection of $x \in B_\rho(\partial \Omega)$ onto $\partial \Omega$. If $x \in \Omega \setminus B_{\rho}(\partial \Omega)$, then we have $r_x \ge \frac{\rho}{2}$ and therefore 
  \begin{equation}
\label{eq:lemma-2-5-2}
\int_{\Omega \setminus B_{r_x}(x)} \deltaO^{-s-\eps}(y) |x-y|^{\lambda-N}\ dy
\le \Bigl(\frac{\rho}{2}\Bigr)^{\lambda-N} \int_{\Omega} \deltaO^{-s-\eps}(y)\ dy \le \frac{C}{1-s-\eps}
  \end{equation}
by the same estimate as in (\ref{eq:est-add-revision}). If $x \in B_{\rho}(\partial \Omega) \cap  \overline \Omega$, then we have 
$\bar x \in B_{\rho}(\partial \Omega) \setminus \Omega$ and
$$
\deltaO(x)= \deltaO(\bar x),\qquad |x-\bar x|= 2\deltaO(x)=4r_x.
$$
Consequently, 
$$
\Bigl(\frac{|\bar x-y|}{|x-y|}\Bigr)^{N-\lambda}\le \Bigl(1+ \frac{4 r_x}{|x-y|}\Bigr)^{N-\lambda} \le 5^{N-\lambda} \le 5^{N}\qquad \text{for $y \in \Omega \setminus B_{r_x}(x)$,}
$$
which implies that 
$$
\int_{\Omega \setminus B_{r_x}(x)} \deltaO^{-s-\eps}(y) |x-y|^{\lambda-N}\ dy \le
5^{N} \int_{\Omega} \deltaO^{-s-\eps}(y) |\bar x-y|^{\lambda-N}\ dy
$$
and therefore, by (\ref{eq:(i)-int-est}) applied to $\bar x$,
\begin{align}
  \int_{\Omega \setminus B_{r_x}(x)} \deltaO^{-s-\eps}(y) |x-y|^{\lambda-N}\ dy &\le C m(s+\eps,\lambda,\deltaO(\bar x))= C m(s+\eps,\lambda,\deltaO(x)) \nonumber\\
  &\le \frac{C}{(1-s-\eps)(1-\lambda)(\lambda-s-\eps)} \le \frac{C}{(\min\{s,1-s\}-\eps)^3}.\label{lemma-2-5-final-est} 
\end{align}
The claim follows by combining (\ref{eq:K-s-eps-old-est})--(\ref{lemma-2-5-final-est}). 
\end{proof}

\section{The solution map and related operators}
\label{sec:solution-map-related}

Throughout this section, we consider the case $N \ge 2$. We introduce first some important notation related to a fractional Poisson problem in an open bounded set $\Omega \subset \R^N$. Recall that, for $s \in (0,1)$, the fundamental solution of $(-\Delta)^s$ is given by $F_s(x,y)=F_s(|x-y|)$, where
\[
F_s(z)=\kappa_{N,s}|z|^{2s-N},\quad\kappa_{N,s}=\frac{\Gamma(\frac{N}{2}-s)}{4^s\pi^{N/2}\Gamma(s)}=\frac{s \Gamma(\frac{N}{2}-s)}{4^s\pi^{N/2}\Gamma(1+s)}.
\]
We also note that 
$$
\frac{\kappa_{N,s}}{s} = \frac{\Gamma(\frac{N}{2}-s)}{4^s\pi^{N/2}\Gamma(1+s)}
\to c_N:= \frac{\Gamma(\frac{N}{2})}{\pi^{N/2}}\qquad \text{as $s \to 0^+$.}
$$
The convolution with the fundamental solution is usually called the {\em Riesz operator} 
$$
f \mapsto \cFs_s f:= F_s * f.
$$
By the Hardy-Littlewood-Sobolev inequality (see \cite{LL01,L83}) this convolution defines a continuous linear map $\cFs_s: L^r(\R^N) \to L^{p(r,s)}(\R^N)$ for 
$$
r \in (1,\frac{N}{2s}) \qquad \text{and}\qquad p(r,s):= \frac{rN}{N-2sr}.
$$
We note the following semigroup property of the operator family $\cFs_s$, $s \in (0,1)$. 

\begin{lemma}
\label{semigroup}
Let $s,\sigma>0$ be given with $\sigma +s <1$. Moreover, let 
$r \in (1,\frac{N}{2(s+\sigma)})$. Then 
\begin{equation}
  \label{eq:semigroup-exponents}
r \in (1,\frac{N}{2\sigma}), \quad p(r,\sigma) \in (1,\frac{N}{2s}) 
\qquad \text{and} \qquad p(p(r,\sigma),s)= p(r,\sigma+s).
\end{equation}
Moreover, for $g \in L^{r}(\R^N)$ we have 
\begin{equation}
  \label{eq:semigroup-function}
\cFs_{\sigma+s} g = \cFs_{s} (\cFs_\sigma g) \quad \text{in $L^{p(r,\sigma+s)}(\R^N)$.}
\end{equation}
\end{lemma}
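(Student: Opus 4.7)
The plan is to first verify the three exponent assertions by elementary manipulation, and then establish the operator identity (\ref{eq:semigroup-function}) by combining the classical Riesz semigroup formula with a density argument based on the Hardy--Littlewood--Sobolev inequality.

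For the exponent assertions, the chain $r < \frac{N}{2(s+\sigma)} < \frac{N}{2\sigma}$ is immediate from $\sigma,s>0$, giving $r \in (1,\tfrac{N}{2\sigma})$. For $p(r,\sigma) = \frac{rN}{N-2\sigma r}$ the lower bound $p(r,\sigma) > 1$ follows from $r>1$ together with the positivity of $N-2\sigma r$, while $p(r,\sigma) < \frac{N}{2s}$ rearranges to $2r(s+\sigma) < N$, which is exactly our standing assumption. The equality $p(p(r,\sigma),s) = p(r,s+\sigma)$ is then a direct substitution: both sides reduce to $\frac{rN}{N-2(s+\sigma)r}$.

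For the operator identity (\ref{eq:semigroup-function}), I would first establish it for $g$ in the Schwartz class $\mathcal{S}(\R^N)$. Here the standard Fourier-analytic computation applies: the Riesz potential $\cFs_s$ acts on tempered distributions as the Fourier multiplier $|\xi|^{-2s}$, and since $|\xi|^{-2s}\,|\xi|^{-2\sigma} = |\xi|^{-2(s+\sigma)}$ with $2(s+\sigma)<N$, we obtain $\cFs_s(\cFs_\sigma g) = \cFs_{s+\sigma} g$ pointwise for such $g$. Equivalently, one may invoke the classical Riesz composition identity $F_\sigma * F_s = F_{s+\sigma}$ (valid as a convolution of tempered distributions in this range) and apply Fubini, using the rapid decay of $g$ to justify the interchange of integrals against the mildly singular kernels.

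To extend to an arbitrary $g \in L^r(\R^N)$, I would use that $\mathcal{S}(\R^N)$ is dense in $L^r(\R^N)$. By the Hardy--Littlewood--Sobolev inequality, $g \mapsto \cFs_{s+\sigma} g$ is continuous from $L^r(\R^N)$ into $L^{p(r,s+\sigma)}(\R^N)$. On the other side, the exponent facts already proved show that $g \mapsto \cFs_s(\cFs_\sigma g)$ is the composition of two HLS-continuous maps $\cFs_\sigma : L^r \to L^{p(r,\sigma)}$ and $\cFs_s : L^{p(r,\sigma)} \to L^{p(r,s+\sigma)}$, hence continuous from $L^r$ into $L^{p(r,s+\sigma)}$. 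Two continuous maps agreeing on a dense subset agree on all of $L^r$, yielding (\ref{eq:semigroup-function}). The main subtlety to watch out for is that the exponent verifications in the first part of the statement are not merely preliminary but are precisely what legitimises each application of HLS in the composition estimate; without them the right-hand side of (\ref{eq:semigroup-function}) would not even be meaningfully defined.
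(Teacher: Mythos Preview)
Your proposal is correct and follows essentially the same approach as the paper: verify the exponent identities by direct computation, establish the semigroup identity on a dense subclass via Fourier transform, and extend by continuity using Hardy--Littlewood--Sobolev. The only cosmetic difference is that the paper uses $C^\infty_c(\R^N)$ rather than the Schwartz class as the dense subset, which is immaterial.
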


\begin{proof}
Direct computation yields (\ref{eq:semigroup-exponents}). Moreover, (\ref{eq:semigroup-function}) is true, by a Fourier transform argument, for functions $g \in C^\infty_c(\R^N)$, and it follows by density for functions $g \in L^r(\R^N)$ (see e.g. \cite{L72,G08}). 
\end{proof}

Throughout the remainder of this section, let $\Omega \subset \R^N$ be a fixed open and bounded set with $C^{2}$-boundary. We recall the following result from \cite{nicola} (see also \cite{BB99,BKK08,BJK19}).

\begin{lemma}
\label{poisson-representation-new-zero}
Let $g: \R^N \setminus \Omega  \to \R$ be a measurable function which is bounded in a relative neighborhood of $\partial \Omega$ in $\R^N \setminus \Omega$ and which satisfies 
$$
\int_{\R^N} \frac{|g(x)|}{(1+|x|)^{N+2s}}\,dx < \infty.
$$
Then there exists a unique function $u: \R^N \to \R$ which is $s$-harmonic in $\Omega$, bounded in a neighborhood of $\partial \Omega$ and satisfying  
$u \equiv g$ in $\R^N \setminus \Omega$. 
\end{lemma}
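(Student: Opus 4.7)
The plan is to establish existence via the Poisson kernel representation and uniqueness via a maximum-principle argument for $s$-harmonic functions. Since $\Omega$ is of class $C^2$, the classical fractional Poisson kernel
$$
P_{\Omega}^{s}(x,y)=c_{N,s}\Bigl(\frac{\delta_{\Omega}(y)^{-s}\,\delta_{\Omega}(x)^{s}}{|x-y|^{N}}\Bigr)\cdot(\text{smooth correction}),\qquad x\in\Omega,\ y\in\R^{N}\setminus\overline{\Omega},
$$
is available with the standard two-sided bounds (cf.\ \cite{B99,BKK08,nicola}); in particular $P_{\Omega}^{s}(x,\cdot)$ behaves like $|y|^{-N-2s}$ at infinity and like $\delta_{\Omega}(y)^{-s}|x-y|^{-N}\delta_{\Omega}(x)^{s}$ near $\partial\Omega$. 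My candidate for the solution is
$$
u(x)=\int_{\R^{N}\setminus\Omega}P_{\Omega}^{s}(x,y)\,g(y)\,dy\quad\text{for }x\in\Omega,\qquad u\equiv g\quad\text{on }\R^{N}\setminus\Omega.
$$

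For existence I would proceed in three steps. First, convergence of the defining integral for each fixed $x\in\Omega$ follows by splitting $\R^{N}\setminus\Omega$ into a relative neighborhood of $\partial\Omega$, where $g$ is bounded and the kernel is integrable in $y$ (the singularity at $\partial\Omega$ is $\delta_{\Omega}(y)^{-s}$, which is locally integrable since $s<1$), and its complement, where the hypothesis $\int_{\R^{N}}|g(y)|(1+|y|)^{-N-2s}\,dy<\infty$ together with the decay $P_{\Omega}^{s}(x,y)\lesssim(1+|y|)^{-N-2s}$ gives absolute convergence. Second, $s$-harmonicity of $u$ in $\Omega$ is inherited from the fact that $P_{\Omega}^{s}(\cdot,y)$ is $s$-harmonic in $\Omega$ for each fixed $y\in\R^{N}\setminus\overline{\Omega}$; differentiation under the integral sign is justified by the decay and local smoothness of the kernel together with dominated convergence. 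Third, boundedness of $u$ in a neighborhood of $\partial\Omega$ follows from the bound
$$
\int_{\R^{N}\setminus\Omega}P_{\Omega}^{s}(x,y)\,dy=1\qquad(x\in\Omega),
$$
combined with the hypothesis that $g$ is bounded in a relative neighborhood of $\partial\Omega$ and the decay of $P_{\Omega}^{s}(x,\cdot)$ at infinity, which lets the ``far'' contribution be bounded uniformly as $x$ approaches $\partial\Omega$.

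Uniqueness is the cleaner half. If $u_{1},u_{2}$ both satisfy the conclusion, then $w:=u_{1}-u_{2}$ is $s$-harmonic in $\Omega$, vanishes identically on $\R^{N}\setminus\Omega$, and is bounded on a neighborhood of $\partial\Omega$, hence on all of $\R^{N}$. The maximum principle for bounded $s$-harmonic functions with zero exterior data (see e.g.\ \cite[Lemma 2.4]{nicola} or the more classical references cited there) then forces $w\equiv 0$ in $\Omega$.

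The main obstacle I anticipate is the boundary-behavior step for existence: one must verify that the boundedness of $g$ \emph{near} $\partial\Omega$ actually transfers to boundedness of $u$ near $\partial\Omega$, despite the fact that $g$ is allowed to grow (with only the weighted integrability $(1+|y|)^{-N-2s}|g|\in L^{1}$) far from $\Omega$. The $C^{2}$-regularity of $\partial\Omega$ is essential here, as it guarantees uniform two-sided bounds on $P_{\Omega}^{s}$ of the Chen--Song type that make the estimate $\int P_{\Omega}^{s}(x,y)\,dy=1$ together with the pointwise decay estimate strong enough to dominate the far-field contribution uniformly as $x\to\partial\Omega$. All other steps are routine consequences of the kernel estimates stated in the cited works, and in fact one may simply invoke \cite{nicola} directly for the full statement.
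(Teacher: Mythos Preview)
The paper does not prove this lemma at all: it is stated as a result recalled from \cite{nicola} (with additional references \cite{BB99,BKK08,BJK19}), and no argument is given. Your sketch via the Poisson kernel representation for existence and the maximum principle for uniqueness is the standard route and is correct in outline; indeed, as you yourself observe at the end, the full statement is already contained in \cite{nicola}, which is exactly what the paper invokes.
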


Using this result, we may define 
$$
\R^{2N}_* := \R^N \times \R^N \setminus \{(x,x)\:: \: x \in \R^N\}
$$
and define a function $H_s: \R^{2N}_* \to \R$, associated with $\Omega$, as follows: For fixed $x\in \Omega$, $H_s(x,\cdot): \R^N \to \R$ denotes the unique solution in $L^\infty(\R^N)$ of
\[
\left\{\begin{aligned}
(-\Delta)^s_yH_s(x,\cdot)&=0&&\text{in $\Omega$,}\\
H_s(x,\cdot)&=F_s(x-\,\cdot)&& \text{on $\R^N\setminus \Omega$.}
\end{aligned}\right.
\]
Moreover, if $x \in \R^N \setminus \Omega$, we set 
$H_s(x,\cdot): = F_s(x-\cdot)$ on $\R^N \setminus \{x\}$.
By the maximum principle, we then have 
\begin{equation}
  \label{eq:H-F-est}
0 \le H_s(x,y) \le F(x-y) \qquad \text{for $x,y \in \R^{2N}_*$.}  
\end{equation}
Consequently, for $r \in (1,\frac{N}{2s})$, we can define an operator 
$$
\cHs_s: L^{r}(\Omega) \to L^{p(r,s)}(\R^N), \qquad [\cHs_s f](x) = \int_{\Omega}H_s(x,y)f(y)\,dy \qquad \text{for $x \in \R^N$.}
$$

The following properties are well known.
 
\begin{lemma}[see \cite{BB99,S07,G15:2,RS16}]
\label{F-H-properties}
Let $r \in (1,\frac{N}{2s})$. 
\begin{enumerate}
\item[(i)] Let $g \in L^{r}(\R^N)$ and $u: = \cFs_s g \in L^{p(r,s)}(\R^N)$. 
If $g \in C^\alpha_{loc}(\Omega)$ for some $\alpha \in (0,1)$, then $u \in C^{2s+\beta}_{loc}(\Omega)$ for $\beta \in (0,\alpha]$ with $2s + \beta \not \in \N$, and 
$$
(-\Delta)^s u = g \qquad \text{in $\Omega$.}
$$
\item[(ii)] Let $g \in L^{r}(\Omega) \cap C^\alpha_{loc}(\Omega)$ for some $\alpha \in (0,1)$ and $u: = \cHs_s g \in L^{p(r,s)}(\R^N)$. Then $u \in C^{2s+\beta}_{loc}(\Omega)$ for $\beta \in (0,\alpha]$ with $2s + \beta \not \in \N$, and 
$$
(-\Delta)^s u = 0 \qquad \text{in $\Omega$.}
$$
\end{enumerate}
\end{lemma}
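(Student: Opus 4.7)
The plan is to reduce both parts to well-established interior Schauder theory for $(-\Delta)^s$, using as structural input the standard decomposition
\[
H_s(x,y) \;=\; F_s(x-y) - G_s(x,y) \qquad\text{on } \R^{2N}_*,
\]
where $G_s$ denotes the Dirichlet Green function of $(-\Delta)^s$ on $\Omega$ (and the associated Green operator is $\cGs_s$). This identity is a direct consequence of the uniqueness part of Lemma~\ref{poisson-representation-new-zero}: both sides are $s$-harmonic in $y$ inside $\Omega$, bounded near $\partial \Omega$, and coincide with $F_s(x-\,\cdot\,)$ on $\R^N \setminus \Omega$.

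For part (i), the function $u = \cFs_s g = F_s\ast g$ is the Riesz potential of $g$, and the Fourier identity $\widehat{F_s}(\xi)=|\xi|^{-2s}$ together with the mapping property $\cFs_s:L^r(\R^N)\to L^{p(r,s)}(\R^N)$ yields $(-\Delta)^s u = g$ globally in the sense of tempered distributions. To convert this into a pointwise statement inside $\Omega$ and to obtain the claimed H\"older regularity, I would invoke the interior Schauder estimates for $(-\Delta)^s$ of Silvestre \cite{S07} and Grubb \cite{G15:2}: once the tail condition $\int_{\R^N}|u(y)|(1+|y|)^{-N-2s}\,dy<\infty$ is verified (which follows from $u \in L^{p(r,s)}(\R^N)$ and the decay of $(1+|y|)^{-N-2s}$ at infinity via H\"older's inequality), the local H\"older input $g\in C^{\alpha}_{loc}(\Omega)$ bootstraps to $u\in C^{2s+\beta}_{loc}(\Omega)$ for $\beta\in(0,\alpha]$ with $2s+\beta\notin\N$, and the equation then holds pointwise in $\Omega$.

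For part (ii), let $\tilde g$ denote the extension of $g$ by zero to $\R^N$. The decomposition of $H_s$ above immediately gives
\[
\cHs_s g \;=\; \cFs_s \tilde g \;-\; \cGs_s g \qquad \text{in } \R^N.
\]
Applying part (i) to $\tilde g$ (which lies in $L^r(\R^N)$ and is locally $C^\alpha$ in $\Omega$) produces $(-\Delta)^s \cFs_s \tilde g = g$ in $\Omega$, while the Green operator yields $(-\Delta)^s \cGs_s g = g$ in $\Omega$ by construction; subtracting gives $(-\Delta)^s \cHs_s g = 0$ pointwise in $\Omega$. The local H\"older regularity of $\cHs_s g$ then follows either by applying the same interior Schauder estimate to the $s$-harmonic function $\cHs_s g$, which is locally bounded in $\Omega$ since the kernel bound $H_s(x,y)\le F_s(x-y)$ from (\ref{eq:H-F-est}) gives $\|\cHs_s g\|_{L^\infty(K)} \le \|\cFs_s |\tilde g|\,\|_{L^\infty(K)}<\infty$ on compact $K\subset\Omega$, or by subtracting the regularities of $\cFs_s\tilde g$ from (i) and of $\cGs_s g$ from the standard Dirichlet theory in \cite{RS16,G15:2}.

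The only delicate point I anticipate is verifying the weighted tail integrability needed to interpret $(-\Delta)^s$ pointwise on each of the three functions $\cFs_s \tilde g$, $\cGs_s g$, $\cHs_s g$. This is not a genuine obstacle: Hardy--Littlewood--Sobolev controls $\cFs_s \tilde g$ globally in $L^{p(r,s)}$, the zero exterior condition trivially controls $\cGs_s g$, and the kernel estimate (\ref{eq:H-F-est}) transfers the tail control to $\cHs_s g$. Beyond this bookkeeping, the proof is essentially an assembly of results already available in the cited literature, combined with the decomposition identity for $H_s$.
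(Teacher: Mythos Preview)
The paper does not give its own proof of this lemma; it is stated with the bracketed citation ``see \cite{BB99,S07,G15:2,RS16}'' and no argument follows. So there is nothing to compare your proof against in the paper itself.

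Your approach to part~(i) is the standard one and is fine: Riesz potential plus interior Schauder estimates, with the weighted-tail condition checked via the $L^{p(r,s)}$ bound.

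For part~(ii), however, there is a circularity issue relative to the paper's logical order. You reduce $(-\Delta)^s\cHs_s g = 0$ to the identity $\cHs_s g = \cFs_s\tilde g - \cGs_s g$ together with the statement that ``the Green operator yields $(-\Delta)^s\cGs_s g = g$ in $\Omega$ by construction.'' In this paper the Green kernel is \emph{defined} as $G_s = F_s - H_s$ (equation~\eqref{eq:splitting-prelim-1}), and the fact that $\cGs_s g$ solves the Dirichlet problem is Lemma~\ref{G-properties}(i), whose proof explicitly invokes the present Lemma~\ref{F-H-properties}. So within the paper's framework your argument for~(ii) uses what it is supposed to establish.

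This is not fatal if you are willing to import the Green-function solvability directly from the external references (e.g.\ \cite{BB99,K97}) rather than from the paper's Lemma~\ref{G-properties}; then the logic is sound. But a cleaner, non-circular route for~(ii) is simply to observe that, by the symmetry $H_s(x,y)=H_s(y,x)$ (proved independently in \cite{K97}), the kernel $H_s(\cdot,y)$ is $s$-harmonic in $\Omega$ for each fixed $y\in\Omega$, and then justify differentiation under the integral sign using the pointwise bound $0\le H_s\le F_s$ and the same tail estimates you already invoke. This avoids any reference to $\cGs_s$.
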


We note that, under the assumptions of Lemma~\ref{F-H-properties}, the function $u = \cHs_s g$ solves the problem 
\begin{equation}
  \label{eq:label-Hs-problem}
(-\Delta)^s u = 0 \qquad \text{in $\Omega$}, \qquad u \equiv \cFs_s \cEO g \qquad \text{in $\R^N \setminus \Omega$}
\end{equation}
and hence it even holds that $u\in C^{\infty}(\Omega)$ (see \cite{BB99}). Here and in the following, for a real-valued function $g$ defined on $\Omega$, we let $\cEO  g$ denote the trivial extension of $g$ to $\R^N$, i.e., $\cEO  g\equiv g$ in $\Omega$ and $\cEO g  \equiv 0$ in $\R^N \setminus \Omega$. Note that, in general, $\cEO g$ is a discontinuous function.

Moreover, if $g$ is a real-valued function defined on $\R^N$, we let $\cRO g$ denote the restriction of $g$ to $\Omega$. We note the trivial identity
\begin{equation}
  \label{eq:trivial-identity}
\cEO \cRO g = g 1_\Omega \qquad \text{for any function $g: \R^N \to \R$.} 
\end{equation}

We now consider the associated Green function $G_s: \R^{2N}_* \to \R^N$ defined by 
\begin{equation}
  \label{eq:splitting-prelim-1}
G_s(x,y)=F_s(x-y)-H_s(x,y) \qquad \text{for $x,y \in \R^{2N}_*$,}
\end{equation}
By (\ref{eq:H-F-est}), we have 
\begin{equation}
  \label{eq:G-F-est}
0 \le G_s(x,y) \le F(x-y) \qquad \text{for $x,y \in \R^{2N}_*$,}  
\end{equation}
so, for $r \in (1,\frac{N}{2s})$ we can define an operator 
\begin{equation}
  \label{eq:G-op-definition}
  \begin{aligned}
&\cGs_s: = \cFs_s \cEO - \cHs_s : \quad L^{r}(\Omega) \to L^{p(r,s)}(\R^N),\\
&[\cGs_s f](x) = \int_{\Omega}G_s(x,y)f(y)\,dy \qquad \text{for $x \in \R^N$.}
  \end{aligned}
\end{equation}

 It can be seen that the functions $H_s$ and $G_s$ are symmetric with respect to reflection of coordinates $x$ and $y$ (see e.g. \cite{K97}). We also have the following. 

\begin{lemma}
\label{G-properties}
Let $g \in L^{r}(\Omega)$ for some $r \in (1,\frac{N}{2s})$. Then $u:= \cGs_s g \in L^{p(r,s)}(\R^N)$ and the following holds.
\begin{enumerate}
\item[(i)] If $g \in C^\alpha_{loc}(\Omega)$ for some $\alpha \in (0,1)$, then $u:= \cGs_s g \in L^{p(r,s)}(\R^N) \cap C^{2s+\beta}_{loc}(\Omega)$ for $\beta \in (0,\alpha]$ with $2s + \beta \not \in \N$, and 
  \begin{equation}
    \label{eq:green-problem}
\left\{
\begin{aligned}
(-\Delta)^s u &= g &&\qquad \text{in $\Omega$,}\\
u &= 0 &&\qquad \text{in $\R^N \setminus \Omega$.}  
\end{aligned}
\right.
  \end{equation}
\item[(ii)] If $g \in L^\infty(\Omega)$, then $u \in C^{s}(\R^N) \cap C^{2s-\eps}_{loc}(\Omega)$ for every $\eps \in (0,2s)$, and  
there are constants $C_1=C_1(\Omega)>0$ and $C_2=C_2(\Omega)>0$ such that
\begin{equation}\label{omega-bar-reg}
\|u\|_{C^s(\overline \Omega)} \le  C_1 \|g\|_{L^{\infty}(\Omega)}
\end{equation}
and 
\begin{equation}\label{sec:notation:boundary-reg}
|u(x)|\leq C_2\|g\|_{L^{\infty}(\Omega)}\deltaO(x)^s\quad \text{ for all $x\in \Omega$.}
\end{equation}
\end{enumerate}
\end{lemma}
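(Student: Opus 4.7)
\textbf{Proof plan for Lemma \ref{G-properties}.}
The plan is to exploit the splitting $\cGs_s = \cFs_s \cEO - \cHs_s$ from \eqref{eq:G-op-definition} and reduce each assertion to the properties of $\cFs_s$ and $\cHs_s$ collected in Lemma \ref{F-H-properties}, together with the known boundary regularity theory for the fractional Poisson problem.

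\medskip

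\emph{Integrability.} Since $\Omega$ is bounded, $\cEO g \in L^r(\R^N)$ whenever $g\in L^r(\Omega)$, so $\cFs_s \cEO g \in L^{p(r,s)}(\R^N)$ by the Hardy--Littlewood--Sobolev inequality. The pointwise bound $0\le H_s(x,y)\le F_s(x-y)$ from \eqref{eq:H-F-est} gives $|\cHs_s g(x)|\le \cFs_s(|\cEO g|)(x)$, hence $\cHs_s g \in L^{p(r,s)}(\R^N)$ as well. Subtracting, $u = \cGs_s g \in L^{p(r,s)}(\R^N)$.

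\medskip

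\emph{Proof of (i).} First I would verify that $u\equiv 0$ in $\R^N\setminus\Omega$: by the definition of $H_s$, we have $H_s(x,\cdot)=F_s(x-\cdot)$ on $\R^N\setminus\Omega$ (both for $x\in\R^N\setminus\Omega$ by definition, and for $x\in\Omega$ by the boundary condition in the problem defining $H_s$, together with the symmetry $H_s(x,y)=H_s(y,x)$). Therefore, for $x\in\R^N\setminus\Omega$,
\[
[\cHs_s g](x) = \int_\Omega H_s(x,y)g(y)\,dy = \int_\Omega F_s(x-y)g(y)\,dy = [\cFs_s \cEO g](x),
\]
so that $u(x)=0$. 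The interior regularity then follows immediately from Lemma~\ref{F-H-properties}: part (i) applied to $\cEO g$ (which coincides with $g\in C^\alpha_{loc}(\Omega)$ in $\Omega$) gives $\cFs_s\cEO g \in C^{2s+\beta}_{loc}(\Omega)$ with $(-\Delta)^s\cFs_s\cEO g = g$ in $\Omega$, while part (ii) gives $\cHs_s g \in C^{2s+\beta}_{loc}(\Omega)$ with $(-\Delta)^s \cHs_s g = 0$ in $\Omega$. Subtracting yields both the claimed local regularity of $u$ and the equation $(-\Delta)^s u = g$ in $\Omega$, which completes the verification of \eqref{eq:green-problem}.

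\medskip

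\emph{Proof of (ii).} When $g\in L^\infty(\Omega)$, the interior regularity $u\in C^{2s-\eps}_{loc}(\Omega)$ for every $\eps\in(0,2s)$ follows by applying the argument of (i) together with Silvestre's interior Schauder-type estimates for the fractional Laplacian, since $g1_\Omega$ is locally bounded on $\Omega$. For the global bound \eqref{omega-bar-reg}, I would invoke the boundary regularity theorem of Ros-Oton and Serra \cite{RS12} quoted in the introduction: it yields the existence of $C_1=C_1(\Omega)>0$ with $\|u\|_{C^s(\R^N)}\le C_1\|g\|_{L^\infty(\Omega)}$, and restricting to $\overline\Omega$ gives \eqref{omega-bar-reg}. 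Finally, for the boundary decay \eqref{sec:notation:boundary-reg}, I would use that $u$ vanishes on $\partial\Omega$: pick $x\in\Omega$ and a nearest point $p(x)\in\partial\Omega$ with $|x-p(x)|=\deltaO(x)$, so that
\[
|u(x)| = |u(x)-u(p(x))| \le \|u\|_{C^s(\R^N)}\,|x-p(x)|^s \le C_1\|g\|_{L^\infty(\Omega)}\,\deltaO(x)^s,
\]
which gives \eqref{sec:notation:boundary-reg} with $C_2=C_1$.

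\medskip

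\emph{Main obstacle.} The nontrivial input is the uniform $C^s$-regularity up to the boundary in (ii), which is not elementary; it relies on the barrier-based argument of Ros-Oton and Serra. All other steps are essentially bookkeeping from the splitting \eqref{eq:splitting-prelim-1} and Lemma~\ref{F-H-properties}.
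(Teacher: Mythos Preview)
Your proposal is correct and follows essentially the same approach as the paper: part (i) is deduced directly from the splitting $\cGs_s=\cFs_s\cEO-\cHs_s$ and Lemma~\ref{F-H-properties}, and part (ii) is reduced to the interior estimates of \cite{S07,G15:2,RS16} and the boundary regularity of \cite{RS12}, with the $s$-independence of $C_1,C_2$ being the only nontrivial point (handled in the paper's Appendix~\ref{appendix}). One small difference worth noting: you derive the decay estimate \eqref{sec:notation:boundary-reg} from the $C^s$-bound \eqref{omega-bar-reg}, whereas the paper's appendix proceeds in the opposite order---first establishing \eqref{sec:notation:boundary-reg} via an explicit barrier (the Kelvin transform of the torsion function on a ball) and then using it to obtain \eqref{omega-bar-reg}; both routes are valid, but the barrier argument is what ultimately supplies the $s$-independent constant.
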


\begin{proof}
(i) immediately follows from Lemma~\ref{F-H-properties} and the definition of the operator $\cGs_s$.\\
(ii) The interior regularity property has been proven in \cite{S07,G15:2,RS16,KM17}. The estimates (\ref{omega-bar-reg}) and (\ref{sec:notation:boundary-reg}) have been proven in \cite{RS12}. Moreover, the independence of $C_1$ and $C_2$ of $s$ can be seen by following closely the proof given in \cite{RS12}. For the reader's convenience we include a proof of this fact in Appendix \ref{appendix}.
\end{proof}

\begin{remark}
\label{green-weak-solution}
If $g \in L^2(\Omega)$, then $u:= \cGs_s g$ is also characterized as the unique weak solution $u \in \cH^s_0(\Omega)$ of (\ref{eq:green-problem}) as defined in the introduction. Indeed, as already noted in the introduction, it follows from the embedding $\cH^s_0(\Omega) \hookrightarrow L^2(\R^N)$ and Riesz' representation theorem that for every $g \in L^2(\Omega)$, there is a unique function 
$\tilde u \in \cH^s_0(\Omega)$ satisfying $(-\Delta)^s \tilde u=g$ in $\Omega$ in {\em weak sense}, i.e. 
$$
\cE_s(\tilde u ,v) = \int_{\Omega} fv\,dx \qquad \text{for every $v \in \cH^s_0(\Omega)$.}
$$
In the case where $g \in C^\infty_c(\Omega)$, it is easy to see that $u:= \cGs_s g \in \cH^s_0(\Omega)$ is a weak solution and therefore $u =  \tilde u$.
In the general case $g \in L^2(\Omega)$, we can argue by approximation with a sequence of functions $g_n \in C^\infty_c(\Omega)$ satisfying $g_n \to g$ in $L^2(\Omega)$ and hence also $g_n \to g$ in $L^{\frac{2N}{N+2s}}(\Omega)$. We can then use the fact that the corresponding weak solutions $\tilde u_n$ satisfy $\tilde u_n \to \tilde u$ in $\cH^s_0(\Omega)$ together with continuity of the Green operator $\cGs_s : L^{\frac{2N}{N+2s}}(\Omega) \to L^{\frac{2N}{N-2s}}(\R^N)$ and the Sobolev embedding $\cH^s_0(\Omega) \hookrightarrow L^{\frac{2N}{N-2s}}(\R^N)$ to conclude that $u= \tilde u$.
\end{remark}

Next we recall a general Poisson kernel representation formula for harmonic functions in $\Omega$ with prescribed values in $\R^N \setminus \Omega$, see e.g. \cite[Theorem 1.2.3 and Lemma 3.1.2)]{nicola} (see also \cite{B99,BKK08}). 

\begin{lemma}
\label{poisson-representation-new}
Under the assumptions of Lemma~\ref{poisson-representation-new-zero}, we have 
$$
u(x)= -\int_{\R^N\setminus \Omega}g(z) (-\Delta)^s_z G_s(x,z)\ dz \qquad \text{for $x \in \Omega$.}
$$
\end{lemma}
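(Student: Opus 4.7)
The plan is to identify the Poisson kernel for the exterior Dirichlet problem with $-(-\Delta)^s_z G_s(x,z)$ via a symmetric bilinear form identity between $u$ and $G_s(x,\cdot)$. First I would verify that $P_s(x,z):=-(-\Delta)^s_z G_s(x,z)$ is well defined and nonnegative for $(x,z)\in \Omega\times(\R^N\setminus \Omega)$. By the definition of $H_s$, one has $G_s(x,\cdot)\equiv 0$ on $\R^N\setminus \Omega$, so the singular integral defining the fractional Laplacian in the $z$-variable collapses (no principal value is needed) to
\[
(-\Delta)^s_z G_s(x,z) \;=\; -c_{N,s}\int_\Omega \frac{G_s(x,w)}{|z-w|^{N+2s}}\,dw\;\leq\;0,
\]
and using $G_s(x,w)\le F_s(x-w)$ from \eqref{eq:G-F-est} one obtains a pointwise decay estimate $|(-\Delta)^s_z G_s(x,z)|\le C(x)(1+|z|)^{-N-2s}$ as $|z|\to\infty$. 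This matches exactly the integrability hypothesis on $g$ in Lemma~\ref{poisson-representation-new-zero} and makes the right-hand side of the target identity absolutely convergent.

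Next I would apply the nonlocal Green's identity $\int_{\R^N}\phi\,(-\Delta)^s\psi\,dy=\int_{\R^N}\psi\,(-\Delta)^s\phi\,dy$ to the pair $\phi=u$ and $\psi=v:=G_s(x,\cdot)$ with $x\in\Omega$ fixed. Since $(-\Delta)^s v=\delta_x$ in $\Omega$ in the distributional sense (from the defining property of the Green function, cf. Lemma~\ref{G-properties}(i)) and $(-\Delta)^s v(z)=-P_s(x,z)$ on $\R^N\setminus\Omega$ by the calculation above, the left-hand side evaluates to $u(x)-\int_{\R^N\setminus\Omega}g(z)P_s(x,z)\,dz$. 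On the right-hand side, $v\equiv 0$ on $\R^N\setminus\Omega$ and $(-\Delta)^s u=0$ in $\Omega$ by the $s$-harmonicity of $u$, so the expression vanishes. Rearranging produces the desired representation $u(x)=-\int_{\R^N\setminus\Omega}g(z)(-\Delta)^s_z G_s(x,z)\,dz$.

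The main technical obstacle is rigorously justifying this symmetric identity, since $v$ has a non-integrable singularity at $y=x$ of order $|x-y|^{2s-N}$, while $u\equiv g$ on $\R^N\setminus\Omega$ has only the mild prescribed decay at infinity. I would proceed by a two-parameter approximation: truncate $g$ to $g_R:=g\, 1_{B_R}$ to obtain a bounded, compactly supported exterior datum whose $s$-harmonic extension $u_R$ converges to $u$ locally uniformly in $\Omega$ (by the uniqueness part of Lemma~\ref{poisson-representation-new-zero} and the maximum principle), and excise a ball $B_\rho(x)$ around the singularity of $v$. On $\R^N\setminus B_\rho(x)$ both functions are smooth enough that the symmetry identity reduces to standard distributional pairings; one then passes to the limits $\rho\to 0^+$ and $R\to\infty$ using the local integrability of $u$ near $x$, the decay bound on $P_s(x,\cdot)$ established in the first step, and the growth hypothesis on $g$. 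As an alternative route, the existence of a Poisson kernel $P_s^\Omega$ realizing $u(x)=\int_{\R^N\setminus\Omega}P_s^\Omega(x,z)g(z)\,dz$ can be taken directly from \cite{nicola,B99,BKK08}, in which case it suffices to identify $P_s^\Omega(x,z)=-(-\Delta)^s_z G_s(x,z)$ by testing against compactly supported exterior data and invoking the decomposition $G_s=F_s-H_s$ together with Lemma~\ref{F-H-properties}.
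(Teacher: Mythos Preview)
The paper does not actually prove this lemma: it is stated with a direct citation to \cite[Theorem 1.2.3 and Lemma 3.1.2]{nicola} (and \cite{B99,BKK08}), so there is no argument in the paper to compare against. Your alternative route at the very end---quote the existence of a Poisson kernel $P_s^\Omega$ from \cite{nicola,B99,BKK08} and identify it with $-(-\Delta)^s_z G_s(x,z)$---is precisely what the paper is doing by citation.

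Your main proposal, the symmetric bilinear identity between $u$ and $G_s(x,\cdot)$ with excision of a ball around the singularity and truncation of $g$, is the standard way this representation is established in the cited references, and the outline you give is sound. The one place to be careful is the sentence ``on $\R^N\setminus B_\rho(x)$ both functions are smooth enough that the symmetry identity reduces to standard distributional pairings'': $u$ is only assumed bounded near $\partial\Omega$ on the outside, with no regularity across $\partial\Omega$, so one does not integrate by parts across the boundary but rather splits the integral into $\Omega$ and $\R^N\setminus\Omega$ and uses that $G_s(x,\cdot)\equiv 0$ outside and $(-\Delta)^s u=0$ inside. With that clarification, and with the local boundary estimate for $G_s(x,\cdot)$ (the behaviour $G_s(x,y)\sim \deltaO(y)^s$ near $\partial\Omega$, which controls the integral of $|(-\Delta)^s_z G_s(x,z)|$ near $\partial\Omega$ from the outside), the limits $\rho\to 0^+$ and $R\to\infty$ go through as you describe.
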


An immediate consequence of Lemma~\ref{poisson-representation-new} are the following formulas, which are stated in \cite[Lemma 3.2.vi)]{nicola}: 
\begin{align*}
F_s(x- \,\cdot\,) &= -\int_{\R^N\setminus \Omega}F_s(x-z) (-\Delta)^s_z G_s(\cdot ,z)\ dz \qquad \text{in $\Omega$ for $x \in \R^N \setminus \overline{\Omega}$,}\\
H_s(x,\cdot) &= -\int_{\R^N\setminus \Omega}F_s(x-z) (-\Delta)^s_z G_s(\cdot,z)\ dz \qquad \text{in $\Omega$ for $x \in \Omega$.}
\end{align*}
Using the facts that $H_s(x,y)= F_s(x-y)$ if $x \in \R^N \setminus \Omega$ or $y \in \R^N \setminus \Omega$, we can write these identities in a compact form as \begin{equation*}
H_s(x,y)=-\int_{\R^N\setminus \Omega}F_s(x-z)(-\Delta)^s_z G_s(y,z)\ dz \qquad \text{for $x \in \R^N \setminus \partial \Omega$, $y \in \Omega$.}
\end{equation*}
Since, for fixed $y \in \Omega$, both sides of the equation are continuous in $x \in \R^N \setminus \{y\}$, the latter identity also holds for $x \in \partial \Omega$, i.e., 
\begin{equation}
  \label{eq:splitting-prelim-2}
H_s(x,y)=-\int_{\R^N\setminus \Omega}F_s(x-z)(-\Delta)^s_z G_s(y,z)\ dz \qquad \text{for $x \in \R^N$, $y \in \Omega$.}
\end{equation}

In order to derive a useful decomposition of the operator $\cGs_s$, we need an estimate for $(-\Delta)^{s}[\cGs_s f]$ on $\R^N \setminus \overline{\Omega}$ for $f \in L^\infty(\Omega)$. We have the following. 
\begin{lemma}\label{qs-bound}
Let $f\in L^{\infty}(\Omega)$ and $0< s< 1$.  Then 
\begin{equation}\label{qs-bound2}
|(-\Delta)^{\sigma}\cGs_s f(x)|\leq \sigma C \|f\|_{L^{\infty}(\Omega)} \frac{1 + \deltaO(x)^{s-2\sigma} |\ln \deltaO(x)|}{1+\deltaO(x)^{N+2\sigma}} \quad\text{ for $x\in \R^N\setminus \overline{\Omega}$, $\sigma\in(s/2,s]$}
\end{equation}
with a constant $C=C(N,\Omega)>0$.
In particular, defining 
$$
\cQs_s f: \R^N \to \R, \qquad [\cQs_s f](x)= \left \{
  \begin{aligned}
  &-(-\Delta)^s [\cGs_s f](x),&&\qquad x \in \R^N \setminus \overline{\Omega},\\
  &0,&& \qquad x \in \overline{\Omega},  
  \end{aligned}
\right.
$$
we have that
\begin{equation}\label{Q:est}
|\cQs_s f(x)|\leq  s \,C \|f\|_{L^{\infty}(\Omega)} \frac{1 + \deltaO(x)^{-s} |\ln \deltaO(x)|}{1+\deltaO(x)^{N+2s}}\quad\text{for $x\in \R^N \setminus \Omega$, $s\in(0,1)$.}
\end{equation}
Consequently, $\cQs_s$ is a continuous linear map $L^\infty(\Omega) \to L^r(\R^N)$ for $r \in [1,\frac{1}{s})$. In addition, we have the estimate
\begin{equation}\label{Q:est-1}
|\cQs_s f(x)|\leq  \frac{C \|f\|_{L^{\infty}(\Omega)}}{\deltaO(x)^{s}+\deltaO(x)^{N+2s}}\quad\text{for $x\in \R^N \setminus \Omega$, $s\in(0,1)$}
\end{equation}
with a constant $C=C(N,\Omega)>0$.
\end{lemma}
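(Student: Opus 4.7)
Set $u := \cGs_s f$ and fix $x \in \R^N \setminus \overline{\Omega}$. The key observation is that $u$ vanishes on an open neighborhood of $x$ (since $u \equiv 0$ on the open set $\R^N \setminus \overline{\Omega}$), so the $\sigma$-fractional Laplacian of $u$ at $x$ is a convergent, non-principal-value integral,
\[
(-\Delta)^\sigma u(x) = -c_{N,\sigma}\int_\Omega \frac{u(y)}{|x-y|^{N+2\sigma}}\,dy.
\]
Combined with the boundary decay estimate $|u(y)| \le C\|f\|_{L^\infty(\Omega)}\,\deltaO(y)^s$ from Lemma~\ref{G-properties}(ii) (with $C = C(\Omega)$ independent of $s$) and the elementary bound $c_{N,\sigma} = 4^\sigma \sigma\,\Gamma(\tfrac{N}{2}+\sigma)/(\pi^{N/2}\Gamma(1-\sigma)) \le C(N)\,\sigma$ uniformly on $\sigma \in [0,1)$ (using $\Gamma(1-\sigma) \ge \Gamma(1) = 1$ on that range), the whole of (\ref{qs-bound2}) reduces to an estimate of $\int_\Omega \deltaO(y)^s |x-y|^{-(N+2\sigma)}\,dy$ that is uniform in $\sigma \in (s/2,s]$.

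This integral is exactly the one controlled by Lemma~\ref{int:la} with $a = -s \in (-1,1)$ and $\lambda = -2\sigma \in [-2s,-s) \subset [-N,1)$. The constraint $\sigma > s/2$ ensures $\lambda < a$, so
\[
m(-s,-2\sigma,\delta) = \frac{1}{1+s}\Bigl(1 + \delta^{s-2\sigma}\min\bigl\{\tfrac{1}{2\sigma-s},\ \tfrac{1+|\ln\delta|}{1+2\sigma}\bigr\}\Bigr).
\]
Because the first branch of the minimum explodes as $\sigma \to (s/2)^+$, I will keep the logarithmic branch and run a short three-case analysis in $\delta$ (namely $\delta \le 1/e$, $\delta \in [1/e,e]$, and $\delta \ge e$), using $\delta^{s-2\sigma} \ge 1$ for $\delta \le 1$ together with the boundedness of $\delta^{s-2\sigma}|\ln\delta|$ for $\delta \ge e$, to absorb the additive $\delta^{s-2\sigma}$ into a multiple of $1 + \delta^{s-2\sigma}|\ln\delta|$. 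This yields $m(-s,-2\sigma,\delta) \le C\bigl(1 + \delta^{s-2\sigma}|\ln\delta|\bigr)$ with $C = C(N,\Omega)$ independent of $s,\sigma$, and combining with the reduction above produces (\ref{qs-bound2}); the special case $\sigma = s$ is (\ref{Q:est}).

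For the sharper bound (\ref{Q:est-1}), I instead pick the \emph{other} branch of the minimum at $\sigma = s$, namely $\frac{1}{2\sigma - s} = \frac{1}{s}$, yielding $m(-s,-2s,\delta) \le C(1 + \delta^{-s}/s)$. Multiplying by $c_{N,s} \le C s$ cancels the $1/s$ and leaves $s + \delta^{-s}$ in the numerator, and an elementary two-case comparison (splitting $\delta \le 1$ versus $\delta \ge 1$ to bound $(s + \delta^{-s})(\delta^s + \delta^{N+2s}) \le C(1 + \delta^{N+2s})$) delivers (\ref{Q:est-1}). Finally, continuity of $\cQs_s\colon L^\infty(\Omega) \to L^r(\R^N)$ for $r \in [1, 1/s)$ follows by integrating the $r$-th power of (\ref{Q:est-1}) over $\R^N \setminus \overline{\Omega}$: in a tubular neighborhood of $\partial\Omega$ the integrand has order $\deltaO(x)^{-rs}$, which is integrable precisely when $rs < 1$, while at infinity it decays as $\deltaO(x)^{-r(N+2s)}$, integrable for every $r \ge 1$ since $r(N+2s) > N$. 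The main delicate point throughout is keeping constants uniform as $\sigma \to (s/2)^+$, and this is exactly what dictates the logarithmic branch in the derivation of (\ref{qs-bound2}).
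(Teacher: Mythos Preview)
Your proposal is correct and follows essentially the same route as the paper: both reduce to the integral $\int_\Omega \deltaO(y)^s|x-y|^{-N-2\sigma}\,dy$ via the boundary decay estimate \eqref{sec:notation:boundary-reg} and the bound $c_{N,\sigma}\le C\sigma$, then invoke Lemma~\ref{int:la} with $a=-s$, $\lambda=-2\sigma$, using the logarithmic branch of $m(-s,-2\sigma,\cdot)$ for \eqref{qs-bound2} (to stay uniform as $\sigma\to (s/2)^+$) and the $\tfrac{1}{2\sigma-s}=\tfrac{1}{s}$ branch at $\sigma=s$ for \eqref{Q:est-1}. Your three-case analysis in $\delta$ and the explicit $L^r$-integrability argument are just a more detailed write-up of what the paper leaves implicit; one small wording issue is that you don't actually need ``boundedness of $\delta^{s-2\sigma}|\ln\delta|$ for $\delta\ge e$'' (which fails uniformly as $\sigma\to (s/2)^+$) but only $\delta^{s-2\sigma}\le 1$ there, which already suffices.
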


\begin{proof}
In the following, the letter $C$ denotes positive constants depending only on $N$ and $\Omega$. By \eqref{sec:notation:boundary-reg}
\[
|\cGs_s f(x)|\leq C\deltaO(x)^{s}\|f\|_{L^{\infty}(\Omega)}\quad\text{ for $x\in \Omega$,}
\]
Hence, for $x\in \R^N\setminus\overline{\Omega}$ and $\sigma\in(s/2,s]$,
\begin{equation}\label{eq:qs-bound:11}
\Big|(-\Delta)^\sigma \cGs_s f(x)\Big|\leq c_{N,\sigma} C \|f\|_{L^{\infty}(\Omega)}\int_{\Omega} \frac{\deltaO(y)^{s}}{|x-y|^{N+2\sigma}}\ dy.
\end{equation}
where $0<c_{N,\sigma}=\frac{\sigma 4^\sigma\Gamma(\frac{N}{2}+\sigma)}{\pi^{\frac{N}{2}}\Gamma(1-\sigma)}\leq \sigma C$ and 
\begin{align*}
\int_{\Omega} &\frac{\deltaO(y)^{s}}{|x-y|^{N+2\sigma}}\ dy \le \frac{C m(-s,-2\sigma,\deltaO(x))}{1+\deltaO(x)^{N+2\sigma}}\\
&\le \frac{C}{(1+s)(1+\deltaO(x)^{N+2\sigma})} \Bigl(1+ \frac{\deltaO(x)^{-s}(1+|\ln \deltaO(x)|)}{1+2\sigma}\Bigr) 
\le \frac{C(1+\deltaO(x)^{-s}|\ln \deltaO(x)|)}{1+\deltaO(x)^{N+2\sigma}}
\end{align*}
by Lemma \ref{int:la} (with $\lambda=-2\sigma$ and $a=-s$). Thus \eqref{qs-bound2} follows, and \eqref{Q:est} is a direct consequence of \eqref{qs-bound2}. Using also that 
$$
\frac{m(-s,-2s,\deltaO(x))}{1+\deltaO(x)^{N+2\sigma}}\le \frac{C\bigl(1+ \frac{\deltaO(x)^{-s}}{s}\bigr)}{(1+s)(1+\deltaO(x)^{N+2\sigma})}  
\le \frac{C}{s\bigl(\deltaO(x)^s+\deltaO(x)^{N+2\sigma}\bigr)},
$$
we obtain the alternative estimate
$$
|\cQs_s f(x)| \leq  \frac{c_{N,s} C \|f\|_{L^{\infty}(\Omega)}}{s\bigl(\deltaO(x)^{s}+\deltaO(x)^{N+2s}\bigr)}
\le \frac{C \|f\|_{L^{\infty}(\Omega)}}{\deltaO(x)^{s}+\deltaO(x)^{N+2s}}
$$
for $x \in \R^N \setminus \Omega$ and $s \in (0,1)$, as claimed in (\ref{Q:est-1}).
\end{proof}

\begin{remark}
\label{Q-positivity-preserving}
We also note that the operator $\cQs_s$ defined in Lemma~\ref{qs-bound} is positivity preserving. Indeed, if $f \in L^\infty(\Omega)$ is nonnegative, then $\cGs_s f$ is nonnegative in $\Omega$ and vanishes on $\R^N \setminus \Omega$. Consequently, 
$$
[\cQs_s f](x)= c_{N,s} \int_{\Omega}\frac{[\cGs_s f](y)}{|x-y|^{N+2s}}dy \ge 0 \qquad \text{for $x \in \R^N \setminus \Omega$.}
$$
By definition, the operator $\cFs_s$ is also positivity preserving, and so are the operators $\cGs_s$ and $\cHs_s$ by (\ref{eq:H-F-est}) and (\ref{eq:G-F-est}).
Since all of these operators are linear, it follows that they are order preserving and therefore give rise to pointwise inequalities of the form 
$$
|\cQs_s f| \le \cQs_s |f|, \qquad |\cFs_s f| \le \cFs_s |f|,\qquad |\cHs_s f| \le \cHs_s |f| \qquad \text{and}\qquad |\cGs_s f| \le \cGs_s |f| 
$$
for functions $f$ in the respective domain of definition. 
\end{remark}

By (\ref{eq:splitting-prelim-2}), we can write, for $f \in L^\infty(\Omega)$ and $x \in \R^N$,
\begin{align*}
[\cHs_s f](x)&=- \int_{\Omega} \int_{\R^N\setminus \Omega}F_s(x-z) (-\Delta)^s_z G_s(y,z)\ dz f(y)\,dy\\
&=-\int_{\R^N\setminus \Omega} F_s(x-z) (-\Delta)^s_z \int_{\Omega}  G_s(y,z) f(y)\,dy\ dz= \int_{\R^N} F_s(x-z) [\cQs_s f](z) dz = [\cFs_s \cQs_s f](x)
\end{align*}
with the operator $\cQs_s$ defined in Lemma~\ref{qs-bound}.  Here the application of Fubini's theorem is justified by the estimate in 
Lemma~\ref{qs-bound}. In short, 
\begin{equation}
  \label{eq:Hs-identity}
\cHs_s f = \cFs_s \cQs_s f \qquad \text{in $\R^N$ for $f \in L^\infty(\Omega)$.}
\end{equation}
This yields the decomposition
\begin{equation}
\label{Q:rep}
\cGs_s f= \cFs_s \cEO f -\cHs_s f =  \cFs_s \Bigl(\cEO f- \cQs_s f\Bigr) 
 \qquad \text{in $\R^N$ for $f \in L^\infty(\Omega)$,}
\end{equation}
which turns out to be highly useful for our purposes.  It is natural to also define 
$$
G_0 f = \cEO f \qquad \text{for $f \in L^\infty(\Omega)$.}
$$
Since $(-\Delta)^0$ is the identity operator, we thus have that $\cQs_0 f = 0$ for $f \in L^\infty(\Omega)$. It is also consistent to identify $F_0$ with the identity operator on $L^r(\R^N)$ for $r>1$. With these definitions, we can now analyze continuity and differentiability of the solution map 
$$
s\mapsto u_s:= \cGs_s f,\qquad s\geq 0
$$
for $f \in L^\infty(\R^N)$. 

\section{Differentiability of the solution map at \texorpdfstring{$s=0$}{s=0}}\label{sec:thecases0}

The goal of the present section is to derive the following bound on difference quotients, which give the proof of the differentiability of the solution map at $s=0$. Throughout this section, we assume that $\Omega\subset\R^N$, $N \ge 2$ is an open bounded set with $C^{2}$ boundary. The following is the main result of this section.

\begin{prop}\label{derivative-zero}
Let $f\in C^{\alpha}(\overline{\Omega})$ for some $\alpha>0$. Then 
\begin{equation}
\label{derivative-zero-est-1}
\limsup_{s \to 0^+}\frac{1}{s} \Big\|\deltaO^{\eps}\Bigl( \frac{G_sf-f}{s}+\loglap \cEO f\Bigr) \Big\|_{L^\infty(\Omega)} \le C \|f\|_{C^\alpha(\overline \Omega)}\qquad \text{for every $\epsilon>0$}
\end{equation}                                                     
with a constant $C= C(N,\Omega,\eps,\alpha)>0$.  In particular,
\begin{align*}
\text{$\cGs_s f \to f$\quad and \quad $\frac{\cGs_sf-f}{s} \to - \loglap \cEO f$ \quad almost uniformly in $\Omega$ as $s \to 0^+$.}
\end{align*}
\end{prop}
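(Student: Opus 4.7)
The starting point is the splitting \eqref{Q:rep}, which on $\Omega$ (where $\cEO f = f$) gives
$$
\frac{\cGs_s f - f}{s} + \loglap \cEO f \;=\; A_s \;-\; B_s
$$
with $A_s := \tfrac{\cFs_s \cEO f - \cEO f}{s} + \loglap \cEO f$ and $B_s := \tfrac{\cFs_s \cQs_s f}{s}$. Estimate \eqref{derivative-zero-est-1} is then equivalent to the two weighted bounds $\|\deltaO^\eps A_s\|_{L^\infty(\Omega)} = O(s)$ and $\|\deltaO^\eps B_s\|_{L^\infty(\Omega)} = O(s)$ as $s \to 0^+$, with constants proportional to $\|f\|_{C^\alpha(\overline\Omega)}$.

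The bound on $B_s$ is the easier half, since two $s$-factors are already present. Using $\kappa_{N,s} = sc_N + O(s^2)$ and the estimate \eqref{Q:est} of Lemma~\ref{qs-bound}, one obtains
$$
|\cFs_s\cQs_s f(x)| \le Cs^2\|f\|_\infty \int_{\R^N\setminus\Omega}|x-y|^{2s-N}\,\frac{1+\deltaO(y)^{-s}|\ln\deltaO(y)|}{1+\deltaO(y)^{N+2s}}\,dy.
$$
A suitable choice of $\lambda$ in Corollary~\ref{est:l-new} bounds the remaining integral by $Cm(s,\lambda,\deltaO(x))$, whose only singular behavior near $\partial\Omega$ is a logarithm that is absorbed by the $\deltaO^\eps$-weight; this delivers $\|\deltaO^\eps B_s\|_{L^\infty(\Omega)} = O(s)$.

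The estimate on $A_s$ is the technical core. For $x \in \Omega$, writing $\cFs_s\cEO f(x) = \kappa_{N,s}\int_\Omega |x-y|^{2s-N}f(y)\,dy$ and $\loglap\cEO f(x)$ via \eqref{eq:log-laplace2}, and splitting $f(y) = f(x) + (f(y) - f(x))$, one reorganizes
\begin{align*}
sA_s(x) &= \int_\Omega (f(y)-f(x))\,|x-y|^{-N}\bigl[\kappa_{N,s}|x-y|^{2s}-sc_N\bigr]\,dy \\
&\quad + f(x)\,\Bigl[\kappa_{N,s}\!\int_\Omega |x-y|^{2s-N}\,dy \;-\; 1 + s(h_\Omega(x)+\rho_N)\Bigr].
\end{align*}
For the first summand, Lemma~\ref{general-est} applied to $|x-y|^{2s}-1$, together with $\kappa_{N,s}-sc_N = O(s^2)$ and the Hölder bound $|f(y)-f(x)| \le \|f\|_{C^\alpha}|x-y|^\alpha$, gives a pointwise estimate of the form $Cs^2\|f\|_{C^\alpha}|x-y|^{\alpha-N}\bigl(1+\bigl|\ln|x-y|\bigr|\bigr)$, and Lemma~\ref{int:la} controls the corresponding integral uniformly in $x$, even with the $\deltaO^\eps$-weight. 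For the bracketed scalar factor, the key observation is the expansion
$$
\kappa_{N,s}\!\int_\Omega |x-y|^{2s-N}\,dy \;=\; 1 - s\bigl(h_\Omega(x)+\rho_N\bigr) + O(s^2) \qquad \text{(uniformly in $x\in\Omega$, with $\deltaO^{-\eps}$-remainder)},
$$
which is essentially the pointwise derivative at $s=0$ of $\cFs_s 1_\Omega$, identified via \eqref{eq:log-laplace2} applied to $1_\Omega$ after splitting the integral at the unit ball $B_1(x)$; once more Lemma~\ref{general-est} together with the integral estimates of Section~\ref{sec:preliminary-estimates} provides the quantitative remainder in weighted norm.

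Combining both estimates proves \eqref{derivative-zero-est-1}, and the almost uniform convergences are immediate consequences: \eqref{derivative-zero-est-1} directly yields almost uniform convergence $\tfrac{\cGs_s f - f}{s} \to -\loglap \cEO f$, and multiplying by $s$, together with the observation that $\deltaO^\eps \loglap \cEO f \in L^\infty(\Omega)$ (since $\loglap \cEO f$ has at most logarithmic blow-up near $\partial\Omega$, coming from the boundary behaviour of $h_\Omega$ via \eqref{eq:h-func}), gives almost uniform convergence $\cGs_s f \to f$. The principal obstacle I anticipate is the scalar-factor expansion in the $A_s$-analysis: identifying $\kappa_{N,s}\int_\Omega|x-y|^{2s-N}dy$ with $1 - s(h_\Omega(x)+\rho_N)$ up to an $O(s^2)\deltaO^{-\eps}$ remainder requires delicately matching the splitting of the integral over $\Omega\cap B_1(x)$ and $\Omega\setminus B_1(x)$ with the two pieces of $h_\Omega$ in \eqref{eq:h-func}, while keeping the remainder uniform under the boundary-sensitive weight.
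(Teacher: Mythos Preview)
Your proposal is correct and uses the same overall $A_s$/$B_s$ splitting as the paper (there written as \eqref{sec:thecases0-separation}, with $B_s=\tfrac{1}{s}\cHs_s f$ via \eqref{eq:Hs-identity}); the $B_s$ bound you sketch is exactly Lemma~\ref{separate-case}.  The only genuine difference is in the $A_s$ estimate.  The paper (Lemma~\ref{rn-case}(ii)) works with the $B_1(x)$-centered representation \eqref{eq:log-laplace} of $\loglap$, so that the three error terms are $\int_{B_1}$, $\int_{\R^N\setminus B_1}$, and the scalar $\bigl|\tfrac{\kappa_{N,s}}{s^2c_N}-\tfrac{1}{s}+\rho_N\bigr|$; the boundary-sensitive piece is then isolated as the annulus $B_1(x)\setminus B_{\deltaO(x)}(x)$, where only $\|f\|_{L^\infty}$ is used and the $\deltaO^{-\eps}$ blowup appears directly.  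Your route via the $\Omega$-centered representation \eqref{eq:log-laplace2} and the split $f(y)=f(x)+(f(y)-f(x))$ pushes all the boundary sensitivity into the scalar factor, which is then literally the $f\equiv 1$ case of the same estimate (since $\loglap 1_\Omega = h_\Omega+\rho_N$); this works but is slightly more circular, and your reference to Lemma~\ref{int:la} for the difference term is unnecessary---that integral is simply $\int_\Omega |x-y|^{\alpha-N}(1+|\ln|x-y||)\,dy$, bounded uniformly in $x$ without any boundary weight.
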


To show Proposition \ref{derivative-zero}, we fix $f\in C^{\alpha}(\overline{\Omega})$, and we first note that  
\begin{equation}\label{sec:thecases0-separation}
\frac{\cGs_sf-\cEO f }{s}+1_{\Omega} \loglap \cEO f
=1_\Omega \Big(\frac{\cFs_s-\textnormal{id}}{s}\cEO f+\loglap \cEO f\Big)  - 
1_\Omega \frac{\cHs_sf}{s}.
\end{equation}
The following lemmas provide bounds on the terms in \eqref{sec:thecases0-separation}. We recall here that 
\[
\frac{F_s(z)}{s}=\frac{\Gamma(\frac{N}{2}-s)}{s4^s\pi^{N/2}\Gamma(s)}|z|^{2s-N}=\frac{\Gamma(\frac{N}{2}-s)}{4^s\pi^{N/2}\Gamma(s+1)}|z|^{2s-N}\to c_N|z|^{-N}\quad\text{as $s\to 0^+$ for $z \in \R^N \setminus \{0\}$.}
\]

\begin{lemma}\label{rn-case}\hspace{1em}
  \begin{enumerate}
  \item[(i)] If $f\in C^{\alpha}_c(\R^N)$ for some $\alpha>0$, then  
    \begin{equation}
      \label{eq:rn-case-first-claim}
\Big|\Bigl[\frac{\cFs_s-\textnormal{id}}{s}f\Bigr] (x)+[\loglap f](x)\Big|\leq s\ C \|f\|_{C^{\alpha}(\R^N)}\quad\text{ for $x\in \R^N$, $s \in (0,\frac{1}{2}]$}
    \end{equation}
with some constant $C= C(N,\alpha,\supp f)>0$.
\item[(ii)] If $f\in C^{\alpha}(\overline{\Omega})$ for some $\alpha>0$, then 
  \begin{equation}
    \label{eq:rn-case-second-claim}
\Big|\Bigl[\frac{\cFs_s-\textnormal{id}}{s}\cEO f\Bigr] (x)+[\loglap \cEO f](x)\Big|\leq \frac{s\ C}{\eps} \deltaO(x)^{-\epsilon}\|f\|_{C^{\alpha}(\overline{\Omega})}
\end{equation}
for $x\in \Omega$, $\epsilon \in (0,1)$, $s\in(0,\frac{1}{2}]$ with some constant $C=C(N,\Omega,\alpha)>0$.
  \end{enumerate}
\end{lemma}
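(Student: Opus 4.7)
The plan is to compare $\tfrac{\cFs_s-\textnormal{id}}{s}f$ with $-\loglap f$ via their explicit integral kernels, using $F_s(z)=\kappa_{N,s}|z|^{2s-N}$ and the representation \eqref{eq:log-laplace} of $\loglap$. Setting $\tilde c_{N,s}:=\kappa_{N,s}/s$ (so $\tilde c_{N,0}=c_N$) and using $\int_{B_1(0)}|z|^{2s-N}\,dz=|S^{N-1}|/(2s)$ together with $c_N|S^{N-1}|=2$, I will decompose
\begin{equation*}
\frac{\cFs_s f(x)-f(x)}{s}+\loglap f(x)\;=\;A_s(x)+B_s(x)+C_s(x),
\end{equation*}
where $A_s(x)=\bigl[\tfrac{1}{s}(\tilde c_{N,s}/c_N-1)+\rho_N\bigr]f(x)$, and $B_s(x),C_s(x)$ are integrals of the kernel $K_s(r):=\tilde c_{N,s}\,r^{2s-N}-c_N\,r^{-N}$ against $f(y)-f(x)$ over $B_1(x)$ and against $f(y)$ over $\R^N\setminus B_1(x)$, respectively.

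For part (i), the scalar $A_s$ is $O(s)\|f\|_{\infty}$ by differentiating $\tilde c_{N,s}$ at $s=0$: $\tfrac{d}{ds}\big|_{s=0}[\ln\Gamma(N/2-s)-s\ln 4-\ln\Gamma(1+s)]=-\psi(N/2)-2\ln 2+\gamma=-\rho_N$, so $\tilde c_{N,s}/c_N=1-s\rho_N+O(s^2)$. For $B_s$, I factor $K_s(r)=c_N r^{-N}(r^{2s}-1)+(\tilde c_{N,s}-c_N)r^{2s-N}$ and use the elementary bound $|r^{2s}-1|\le 2s|\ln r|$ for $r\in(0,1]$ to get $|K_s(r)|\le Cs\,r^{-N}(1+|\ln r|)$; combined with the H\"older estimate $|f(y)-f(x)|\le\|f\|_{C^{\alpha}}|y-x|^{\alpha}$ this gives $|B_s(x)|\le Cs\|f\|_{C^{\alpha}}\int_0^1 r^{\alpha-1}(1+|\ln r|)\,dr$. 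For $C_s$, the analogue $|r^{2s}-1|\le 2s(\ln r)\,r^{2s}$ for $r\ge 1$ (both follow from $|e^t-1|\le|t|e^{|t|}$) together with the compact support of $f$ confines the $y$-integration to a bounded region and yields an $O(s)$ bound with constant depending on $N$, $\alpha$, and $\supp f$.

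For part (ii) the new feature is that $\cEO f$ has a jump of size $f(x)$ across $\partial\Omega$, so the H\"older bound on $\cEO f(y)-\cEO f(x)$ fails when $y\in\R^N\setminus\Omega$. I split each of $B_s,C_s$ into pieces over $\Omega$ and $\R^N\setminus\Omega$; since $\cEO f\equiv 0$ outside $\Omega$, the $\Omega$-parts reduce to the treatment in (i), while the $B_1(x)\setminus\Omega$ portion of $B_s$ produces a new \emph{jump term} $J_s(x):=-f(x)\int_{B_1(x)\setminus\Omega}K_s(|x-y|)\,dy$ for which no H\"older cancellation is available. Applying Lemma~\ref{general-est} with parameter $\eps$ yields $|r^{2s}-1|\le\tfrac{4s}{\eps}r^{-\eps}$ for $r\in(0,1]$, whence $|K_s(r)|\le\tfrac{Cs}{\eps}r^{-N-\eps}+Cs\,r^{2s-N}$; the geometric inequality $|x-y|\ge\deltaO(x)$ for $y\notin\Omega$ combined with Lemma~\ref{int:la} (applied with $a=0$, $\lambda=-\eps$) gives $\int_{B_1(x)\setminus\Omega}|x-y|^{-N-\eps}\,dy\le C\deltaO(x)^{-\eps}/\eps$, while the $r^{2s-N}$ contribution is uniformly bounded thanks to the exact cancellation of the prefactor $s$ with the $1/(2s)$ growth of $\int_{\deltaO(x)}^{1}r^{2s-1}\,dr$. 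The far-field piece over $\Omega\setminus B_1(x)$ is controlled by $Cs\|f\|_\infty|\Omega|$ since $K_s$ is uniformly $O(s)$ on the bounded region $\{r\in[1,\diam\Omega]\}$. The main obstacle is the careful $\eps$-bookkeeping: ensuring that the logarithmic divergences arising simultaneously from the singular kernel $K_s$ and from the boundary integral over $B_1(x)\setminus\Omega$ combine into the stated form $\tfrac{sC}{\eps}\deltaO(x)^{-\eps}\|f\|_{C^{\alpha}(\overline\Omega)}$ with $C=C(N,\Omega,\alpha)$ uniformly in $s\in(0,\tfrac12]$.
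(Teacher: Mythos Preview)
Your approach is essentially the paper's: the same three-term decomposition into a scalar piece, a near-field integral over $B_1(x)$, and a far-field integral, with the same Taylor expansion $\tilde c_{N,s}/c_N=1-s\rho_N+O(s^2)$ for the scalar and the same kernel factorization $K_s(r)=c_Nr^{-N}(r^{2s}-1)+(\tilde c_{N,s}-c_N)r^{2s-N}$. In part (ii) the paper splits $B_1(x)$ into $B_{\delta_x}(x)$ (where $\cEO f$ is H\"older) and its complement (where only the $L^\infty$ bound is used), rather than into $\Omega$ and $\R^N\setminus\Omega$ as you do; this lets the paper get by with the elementary radial integral $\int_{\delta_x}^1 r^{-1-\eps}\,dr$ instead of invoking Lemma~\ref{int:la}, but your geometric split works equally well.

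There is one slip in your handling of the jump term $J_s$. From $|K_s(r)|\le \tfrac{Cs}{\eps}r^{-N-\eps}+Cs\,r^{2s-N}$ you argue that the second piece is ``uniformly bounded'' after integration over $B_1(x)\setminus\Omega$ because the prefactor $s$ cancels the $1/(2s)$ growth of $\int_{\deltaO(x)}^1 r^{2s-1}\,dr$. That yields a contribution of order $1$, not of order $s$, so it cannot be absorbed into the claimed bound $\tfrac{sC}{\eps}\deltaO(x)^{-\eps}\|f\|_{C^{\alpha}(\overline\Omega)}$, which tends to $0$ as $s\to 0^+$. The fix is immediate: for $r\in(0,1]$ one has $r^{2s-N}\le r^{-N}\le r^{-N-\eps}$, so the second piece is dominated by the first and the whole jump term satisfies $|J_s(x)|\le \tfrac{Cs}{\eps^2}\deltaO(x)^{-\eps}\|f\|_{L^\infty(\Omega)}$. (Traced carefully, the paper's own argument also produces an $\eps^{-2}$ rather than the stated $\eps^{-1}$; this is harmless for the applications, where only the limit $s\to 0^+$ at fixed $\eps$ is used.)
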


\begin{proof}
(i) Let $f\in C^{\alpha}_c(\R^N)$ for some $\alpha>0$ and $R>0$ with $\supp f \subset B_R(0)$. In the following $C>0$ denotes possibly different constants depending only on $N,R$ and $\alpha$. Recall that $c_N=\frac{2}{|\partial B_1|}$, and thus
\[
\int_{B_1}|y|^{2s-N}\ dy=\frac{1}{s\, c_N}.
\]
For $x\in \R^N$, we thus have 
$$
\frac{\cFs_s}{s}f(x)=\frac{\kappa_{N,s}}{s}\int_{B_1}(f(x+y)-f(x))|y|^{2s-N}\ dy+ \frac{\kappa_{N,s}}{s^2c_N}f(x)+\frac{\kappa_{N,s}}{s}\int_{\R^N\setminus B_1}f(x+y)|y|^{2s-N}\ dy
$$
and therefore
\begin{align}
&|\frac{\cFs_s-\textnormal{id}}{s}f(x)+\loglap f(x)| \label{rn-case-first-est}\\
&\leq \int_{B_1}\frac{|f(x+y)-f(x)|}{|y|^N}\Big|\frac{\kappa_{N,s}}{s}|y|^{2s}-c_N\Big|\ dy+ \int_{\R^N\setminus B_1}\frac{|f(x+y)|}{|y|^N}\Big|\frac{\kappa_{N,s}}{s}|y|^{2s}-c_N\Big|\ dy\nonumber\\
&\qquad\qquad +|f(x)|\Big|\frac{\kappa_{N,s}}{s^2c_N}-\frac{1}{s}+\rho_N\Big|.\nonumber
\end{align}
Next we recall that, since $N \ge 2$, the function $s \mapsto \tau(s):= \frac{\kappa_{N,s}}{s}= \frac{\Gamma(\frac{N}{2}-s)}{4^s\pi^{N/2}\Gamma(1+s)}$
admits a smooth extension on $(-1,1)$ with $\tau(0)= c_N$ and $\tau'(0)=-c_N \rho_N$. Hence, this implies that  
$\big|\frac{\kappa_{N,s}}{s^2c_N}-\frac{1}{s}+\rho_N\big| \le s C$ for $s \in (0,\frac{1}{2})$ and 
$$
\Big|\frac{\kappa_{N,s}}{s}|y|^{2s}-c_N\Big|  \le 
\Big|\frac{\kappa_{N,s}}{s}-c_N\Big||y|^{2s} + c_N \Bigl| |y|^{2s} -1 \Big|
\leq s\, C(1+\big|\ln|y|\big|+
\big|\ln|y|\big| |y|^{2s}  )
$$
for $s \in (0,\frac{1}{2}]$ and $y \in \R^N.$ Consequently,
\begin{align}
\int_{\R^N\setminus B_1}&\frac{|f(x+y)|}{|y|^N}\Big|\frac{\kappa_{N,s}}{s}|y|^{2s}-c_N\Big|\ dy \le  s\, C\|f\|_{L^{\infty}(\R^N)} \int_{B_R(-x) \setminus B_1}
 \frac{1+\big|\ln|y|\big|+
\big|\ln|y|\big| |y|^{2s}}{|y|^N}\,dy \nonumber\\ 
&\leq s\, C\|f\|_{L^{\infty}(\R^N)} |B_R(0)| \le s\, C\|f\|_{L^{\infty}(\R^N)} 
\label{rn-case-second-est}
\end{align}
and
\begin{align}
\int_{B_1}&\frac{|f(x+y)-f(x)|}{|y|^N}\Big|\frac{\kappa_{N,s}}{s}|y|^{2s}-c_N\Big|\ dy\notag \nonumber\\
&\leq Cs \|f\|_{C^{\alpha}(\R^N)}
\int_{B_1} |y|^{\alpha-N}(1+\big|\ln|y|\big|+
\big|\ln|y|\big| |y|^{2s})\,dy \le sC \|f\|_{C^{\alpha}(\R^N)}.\label{rn-case-third-est}
\end{align}
Combining (\ref{rn-case-first-est}), (\ref{rn-case-second-est}) and (\ref{rn-case-third-est}), we conclude that (\ref{eq:rn-case-first-claim}) holds.\\
(ii) Let $R>0$ be chosen with $\Omega \subset B_R(0)$. Then the estimates (\ref{rn-case-first-est}) and (\ref{rn-case-second-est}) still hold with $f$ replaced by $\cEO f$. Moreover, for $x \in \Omega$ and $\eps \in (0,1)$ we have, with $\delta_x:= \deltaO(x)$,
\begin{align*}
\int_{B_1}&\frac{|\cEO f(x+y)-\cEO f(x)|}{|y|^N}\Big|\frac{\kappa_{N,s}}{s}|y|^{2s}-c_N\Big|\ dy\\
\leq& sC  \|f\|_{C^{\alpha}(B_{\delta_x}(x))}\int_{B_{\delta_x}}
|y|^{\alpha-N}(1+\big|\ln|y|\big|+
\big|\ln|y|\big| |y|^{2s}) dy\\
&+ sC \|\cEO f\|_{L^{\infty}(\R^N)} 
\int_{B_1\setminus B_{\delta_x}}|y|^{-N}(1+\big|\ln|y|\big|+
\big|\ln|y|\big| |y|^{2s})\ dy\\
\leq& sC \|f\|_{C^{\alpha}(\overline \Omega)} 
+sC \|\cEO f\|_{L^{\infty}(\R^N)} 
\int_{B_1\setminus B_{\delta_x}}|y|^{-N-\eps}\ dy\\
\leq& sC \|f\|_{C^{\alpha}(\overline \Omega)} 
 + \frac{sC}{\eps} \|f\|_{L^{\infty}(\Omega)}\delta_x^{-\epsilon} \le \frac{sC}{\eps} \|f\|_{C^{\alpha}(\overline \Omega)}\deltaO(x)^{-\epsilon}. 
\end{align*}
This gives (\ref{eq:rn-case-second-claim}). 
\end{proof}

\begin{remark}
\label{log-bound-remark-1}
Let $f\in C^{\alpha}(\overline{\Omega})$ for some $\alpha>0$. Similarly as in the proof of Lemma~\ref{rn-case}(ii), we may prove the bound 
  \begin{equation}
    \label{eq:log-bound-1}
\big|[\loglap \cEO f](x)\big| \leq C \|f\|_{C^{\alpha}(\overline{\Omega})}(1+ |\ln \deltaO(x)|)\qquad \text{for $x\in \Omega$}
\end{equation}
with some constant $C=C(N,\Omega,\alpha)>0$.  Moreover, arguing as in \cite[Proposition 2.2 (ii)]{CW18}, we have that $\loglap \cEO f\in C^{\alpha-\epsilon}_{loc}(\Omega)$ for every $\epsilon>0$.
\end{remark}

\begin{lemma}\label{separate-case}
Let $f\in L^\infty(\Omega)$. Then 
\[
\limsup_{s \to 0^+} \frac{1}{s^2}\|\deltaO^\eps \cHs_s f\|_{L^\infty(\Omega)} \le C \|f\|_{L^\infty(\Omega)}\qquad \text{ for every $\eps>0$}
\]
with a constant $C=C(N,\Omega,\eps)$.
\end{lemma}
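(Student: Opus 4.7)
The plan is to exploit the factorization $\cHs_s f = \cFs_s \cQs_s f$ established in \eqref{eq:Hs-identity}: this makes the required two powers of $s$ manifest, one coming from the prefactor $\kappa_{N,s}$ of the Riesz kernel (recall $\kappa_{N,s}/s \to c_N$ as $s\to 0^+$, hence $\kappa_{N,s}\le sC$ for $s\in(0,1/2]$), and a second from the explicit $s$ in estimate \eqref{Q:est} of Lemma~\ref{qs-bound}. Since $\cQs_s f$ is supported on $\R^N\setminus \overline\Omega$, for $x\in\Omega$ I would write
\[
\cHs_s f(x) = \kappa_{N,s}\int_{\R^N\setminus \Omega} |x-z|^{2s-N}\,[\cQs_s f](z)\,dz,
\]
and combine the two $s$-factors with \eqref{Q:est} to obtain the pointwise bound
\[
|\cHs_s f(x)| \le s^{2}\, C\,\|f\|_{L^{\infty}(\Omega)}\,I_s(x),
\qquad
I_s(x):=\int_{\R^N\setminus\Omega}\frac{|x-z|^{2s-N}\bigl(1+\deltaO(z)^{-s}|\ln\deltaO(z)|\bigr)}{1+\deltaO(z)^{N+2s}}\,dz.
\]
The task thus reduces to proving $I_s(x)\le C_\eps\,\deltaO(x)^{-\eps}$, uniformly for $s$ on some small interval $(0,s_0]$.

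To control $I_s$, I would fix $\eta\in(0,\eps)$ and apply the elementary inequality $|\ln r|\le \eta^{-1}(r^{-\eta}+r^\eta)$ valid for all $r>0$. This reduces $I_s(x)$ to a sum of three integrals of the generic form
\[
J_{s,a}(x):=\int_{\R^N\setminus\Omega}\frac{|x-z|^{2s-N}\,\deltaO(z)^{-a}}{1+\deltaO(z)^{N+2s}}\,dz,
\qquad a\in\{0,\;s+\eta,\;s-\eta\}.
\]
After absorbing $\deltaO(z)^{-a}$ into the denominator, each $J_{s,a}$ fits Corollary~\ref{est:l-new} with parameters $(a,b,\nu)=(a,\,2s+a,\,2s-N)$ in the range $a\ge 0$, while for $a=s-\eta<0$ one argues directly (using $\deltaO^{-s+\eta}\le \deltaO^{-s-\eta}$ on $\{\deltaO\le 1\}$ and integrability at infinity coming from $|x-z|^{2s-N}/\deltaO(z)^{N+2s}\sim |z|^{-2N+2s}$). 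The constraint $\nu\ge\lambda-N$ in Corollary~\ref{est:l-new} forces $\lambda\le 2s$; I would then select $\lambda$ case by case: $\lambda=a$ when $0<a\le 2s$, $\lambda=2s$ when $a=s+\eta>2s$ (i.e.\ $s<\eta$), and $\lambda=0$ in the remaining cases. Inspection of $m(a,\lambda,\cdot)$ from Lemma~\ref{int:la} then yields $m(a,\lambda,\deltaO(x))\le C\bigl(1+|\ln\deltaO(x)|+\deltaO(x)^{-\eta}\bigr)$ in every case. Since $\eta<\eps$ and $|\ln r|\le C_\eps r^{-\eps}$ for $r$ in any bounded range, this gives $I_s(x)\le C_\eps\deltaO(x)^{-\eps}$.

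Combining the two displayed estimates yields $\deltaO(x)^\eps|\cHs_s f(x)|\le s^{2}\,C_\eps\,\|f\|_{L^\infty(\Omega)}$ uniformly for $x\in\Omega$ and $s\in(0,s_0]$, which, after division by $s^2$ and taking the supremum in $x$, is the desired estimate.

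The main technical obstacle is the case analysis in the choice of $\lambda$: the admissible upper bound $\lambda\le 2s$ collapses as $s\to 0$, so one is forced to cross the threshold $\lambda=a$ in the definition of $m(a,\lambda,\delta)$, near which the naïve bounds degenerate like $(\lambda-a)^{-1}$ or $(a-\lambda)^{-1}$. The escape is to choose $\eta$ strictly below $\eps$ and to restrict $s$ to $(0,\eta/2]$: this keeps $|a-\lambda|$ bounded away from $0$ in each subcase, and the residual $\deltaO(x)^{-\eta}$ or $|\ln\deltaO(x)|$ growth is absorbed by the prefactor $\deltaO(x)^\eps$.
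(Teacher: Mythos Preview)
Your proof is correct and follows essentially the same strategy as the paper: use the factorization $\cHs_s f=\cFs_s\cQs_s f$, extract one factor of $s$ from $\kappa_{N,s}$ and a second from \eqref{Q:est}, and then control the remaining integral over $\R^N\setminus\Omega$ via Corollary~\ref{est:l-new}. The only differences are cosmetic: the paper dominates $\deltaO(y)^{-s}|\ln\deltaO(y)|$ by $C\,\deltaO(y)^{-s-\eps/2}$ and always takes $\lambda=0$ in Corollary~\ref{est:l-new}, whereas you use the two-sided bound $|\ln r|\le\eta^{-1}(r^{-\eta}+r^\eta)$ and a finer case split for $\lambda$; your choice is slightly more careful about the far-field $\{\deltaO\ge 1\}$, but both routes lead to the same $\deltaO(x)^{-\eps}$ bound.
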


\begin{proof}
It suffices to consider $\eps \in (0,\frac{1}{4})$.  Let $s \in (0,\frac{\eps}{2})$. In the following, $C>0$ denotes constants depending only on $N,\Omega$ and $\eps$. Let $x \in \Omega$. Since 
$$
\frac{1}{s}[\cHs_s f](x) = \frac{1}{s} [\cFs_s \cQs_s f](x)= \frac{\kappa_{N,s}}{s}
\int_{\R^N\setminus \Omega}|x-y|^{2s-N} \cQs_s f(y)\,dy,
$$
we have, by Lemma~\ref{qs-bound},
\begin{align*}
\bigl|\frac{1}{s}&[\cHs_s f](x)\bigr| \le \kappa_{N,s}\, C \|f\|_{L^\infty(\Omega)} \int_{\R^N\setminus \Omega}\frac{(1+\deltaO(y)^{-s}|\ln \deltaO(y)|)|x-y|^{2s-N}}{1+\deltaO(y)^{N+2s}}\,dy\\
&\le s\,C \|f\|_{L^\infty(\Omega)} \int_{\R^N\setminus \Omega}\frac{(1+\deltaO(y)^{-s-\frac{\eps}{2}})|x-y|^{2s-N}}{1+\deltaO(y)^{N+2s}}\,dy.
\end{align*}
Therefore, by Corollary~\ref{est:l-new}, applied with $a=0$ and $a=s+\frac{\eps}{2}$, $b= 2s$ and $b=3s+\frac{\eps}{2}$, $\lambda=0$, $\nu = 2s-N$,
\begin{align*}
\bigl|\frac{1}{s}&[\cHs_s f](x)\bigr| \le
s\,C \|f\|_{L^\infty(\Omega)}  \Bigl(m(0,0,\deltaO(x)) + m(s+\frac{\eps}{2},0,\deltaO(x))\Bigr)\\
& \le s \, C  \|f\|_{L^\infty(\Omega)} \Bigl(1+|\ln \deltaO(x)| + \deltaO(x)^{-s-\frac{\eps}{2}}\bigl(1+|\ln \deltaO(x)|\Bigr) \le s \, C \|f\|_{L^\infty(\Omega)} \deltaO(x)^{-\eps}.
\end{align*}
The claim thus follows.
\end{proof}

\begin{proof}[Proof of Proposition~\ref{derivative-zero} (completed)]
The estimate (\ref{derivative-zero-est-1}) follows by combining equations (\ref{sec:thecases0-separation}), (\ref{eq:rn-case-second-claim}), and Lemma~\ref{separate-case}.
Moreover, (\ref{derivative-zero-est-1}) implies that $\frac{\cGs_sf-f}{s} \to - \loglap \cEO f$ almost uniformly in $\Omega$ as $s \to 0^+$, and thus 
$\cGs_s f \to f$ almost uniformly in $\Omega$ as $s \to 0^+$ by (\ref{eq:log-bound-1}).
 \end{proof}

\section{Continuity of the solution map}\label{sec:continuity}

Throughout this section, we assume that $\Omega\subset\R^N$, $N \ge 2$ is an open bounded set with $C^{2}$ boundary. We now discuss continuity results for the solution map $(s,f) \mapsto \cGs_s f$ for $s \in [0,1]$. We recall, that for $s=1$, the definition of $\cH^s_0(\Omega)$ coincides with the classical Sobolev space $H^1_0(\Omega)$. For the continuity at $s=1$ in a weak setting, we refer to \cite{BH18}. Moreover, the behavior of $s$-harmonic functions as $s\to 1^-$ has been studied in \cite{TTV18}. 

\begin{lemma}\label{lemma:continuity} Let $s_0 \in (0,1]$, $\delta \in (0,s_0)$. Moreover, let $(s_n)_n \subset (s_0-\delta,1]$ be a sequence with $s_n \to s_0$, and let $(f_n)_n \subset L^2(\Omega)$ be a sequence with $f_n \to f_0 \in L^2(\Omega)$. 
Then we have 
	\begin{align*}
		\cGs_{s_n} f_n\to \cGs_{s_0} f_0\qquad \text{ strongly in $\cH_0^{s_0-\delta}(\Omega)$ as $n \to \infty$.}
	\end{align*}
\end{lemma}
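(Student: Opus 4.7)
The argument proceeds in three steps: uniform a priori bounds on $u_n := \cGs_{s_n}f_n$ in a space with a fixed fractional index below $s_0$, identification of the weak limit via the weak formulation, and an upgrade to strong convergence via a low/high frequency split on the Fourier side. Testing $\cE_{s_n}(u_n, v) = \int_\Omega f_n v\,dx$ against $v = u_n$ gives $\cE_{s_n}(u_n, u_n) = \int_\Omega f_n u_n\,dx$. Combined with a uniform Poincar\'e inequality $\|u\|_{L^2}^2 \le C(\Omega)\cE_s(u, u)$ for $u \in \cH^s_0(\Omega)$ with $s$ in any compact subset of $(0, 1]$ --- provable via Plancherel by writing $\|\hat u\|_{L^\infty} \le |\Omega|^{1/2}\|u\|_{L^2}$ (since $\supp u \subset \overline \Omega$) and splitting $\int|\hat u|^2$ at a radius $R$ depending only on $|\Omega|$ --- this yields $\|u_n\|_{L^2}$ and $\cE_{s_n}(u_n, u_n)$ uniformly bounded. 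Since $s_n > s_0 - \delta/2$ eventually, the pointwise inequality $|\xi|^{2(s_0-\delta/2)} \le 1 + |\xi|^{2s_n}$ gives $\cE_{s_0-\delta/2}(u_n, u_n) \le C$, so $(u_n)$ is bounded in $\cH^{s_0-\delta/2}_0(\Omega)$. Along a subsequence, $u_n \weakto u$ in $\cH^{s_0-\delta/2}_0(\Omega)$ and $u_n \to u$ in $L^2(\Omega)$ by the compact embedding.

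To identify $u = \cGs_{s_0}f_0$, take $v \in C_c^\infty(\Omega)$, which lies in $\cH^s_0(\Omega)$ for every $s$. The right-hand sides $\int_\Omega f_n v\,dx$ converge to $\int_\Omega f_0 v\,dx$ since $f_n \to f_0$ in $L^2(\Omega)$. For the left-hand sides, I decompose $\cE_{s_n}(u_n, v) - \cE_{s_0}(u, v) = \int (|\xi|^{2s_n} - |\xi|^{2s_0})\hat u_n \hat v\, d\xi + \int |\xi|^{2s_0}(\hat u_n - \hat u) \hat v\, d\xi$. The second integral vanishes by the weak convergence $u_n \weakto u$ in $\cH^{s_0-\delta/2}_0$, since $|\xi|^{s_0+\delta/2}\hat v \in L^2(\R^N)$ from Schwartz decay of $\hat v$. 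For the first, I factor out $|\xi|^{s_0-\delta/2}\hat u_n$, which is bounded in $L^2$, and apply dominated convergence to $\frac{|\xi|^{2s_n}-|\xi|^{2s_0}}{|\xi|^{s_0-\delta/2}}\hat v$: it tends pointwise to zero and, once $|s_n-s_0| < \delta/4$, is dominated by $2(|\xi|^{s_0}+|\xi|^{s_0+\delta})|\hat v|\in L^2(\R^N)$. This yields $\cE_{s_0}(u, v) = \int_\Omega f_0 v\,dx$ for every $v \in C_c^\infty(\Omega)$. Fatou's lemma on the Fourier side combined with the uniform bound on $\cE_{s_n}(u_n, u_n)$ gives $\cE_{s_0}(u, u) < \infty$, and since $u \equiv 0$ outside $\Omega$ by $L^2$ convergence, the characterization of $\cH^{s_0}_0(\Omega)$ recalled in the introduction places $u \in \cH^{s_0}_0(\Omega)$. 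Density of $C_c^\infty(\Omega)$ in $\cH^{s_0}_0(\Omega)$ then extends the identity, so $u = \cGs_{s_0}f_0$.

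For strong convergence in $\cH^{s_0-\delta}_0(\Omega)$, I split $\int |\xi|^{2(s_0-\delta)}|\hat u_n - \hat u|^2\, d\xi$ at $|\xi| = R$: the low-frequency part is at most $R^{2(s_0-\delta)}\|u_n - u\|_{L^2}^2$, which vanishes for any fixed $R$, while the high-frequency part is at most $R^{-\delta}\cE_{s_0-\delta/2}(u_n - u, u_n - u) \le CR^{-\delta}$, which is made arbitrarily small by taking $R$ large. Uniqueness of $\cGs_{s_0}f_0$ then forces the full sequence to converge, not only a subsequence. The main obstacle I anticipate is the moving Sobolev index $s_n$: the natural energy norm depends on $n$, so ordinary weak compactness does not apply directly. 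The device of passing to the fixed intermediate index $s_0-\delta/2$, where a uniform bound holds via the elementary comparison $|\xi|^{2(s_0-\delta/2)} \le 1 + |\xi|^{2s_n}$, is what makes the compactness work; the most delicate ingredient is then the Fourier-side dominated convergence that gives $\cE_{s_n}(u_n, v) \to \cE_{s_0}(u, v)$ despite both the moving weight $|\xi|^{2s_n}$ and the only weak control on $\hat u_n$.
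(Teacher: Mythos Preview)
Your proof is correct and follows essentially the same strategy as the paper: uniform energy bounds via Poincar\'e and the weak formulation, compactness in a fixed intermediate fractional space, identification of the limit by testing against $C_c^\infty(\Omega)$ together with Fatou's lemma, and a subsequence argument for the full sequence. The only cosmetic differences are that the paper invokes the compact embedding $\cH^{s'}_0(\Omega)\hookrightarrow \cH^{s_0-\delta}_0(\Omega)$ directly (rather than your $L^2$-compactness plus an explicit high/low frequency split) and, for the identification, moves the operator onto the test function via $\cE_{s_n}(u_n,\phi)=(u_n,(-\Delta)^{s_n}\phi)_{L^2}$ instead of splitting on the Fourier side.
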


\begin{proof}
Let $u_n= \cGs_{s_n} f_n$ for $n \in \N \cup \{0\}$ and recall that $u_n \in \cH_0^{s_n}(\Omega)$ is the unique weak solution of $(-\Delta)^{s_n} u_n = f_n$ in $\Omega$, $\;u_n \equiv 0$ in $\R^N \setminus \Omega$, see Remark \ref{green-weak-solution}. Let $s':= \inf \limits_{n \in \N}s_n> s_0-\delta$. By the fractional Poincar\'e inequality, there exists $C_{s'}>0$ with 
\begin{align}\label{pi}
 \|v\|_{L^2(\R^N)}^2 \le C_{s'} \cE_{s'}(v,v) \qquad \text{for all $v \in \cH^{s'}_0(\Omega)$.}
\end{align}
Moreover, for $\eps \in (0,1]$, $n \in \N$ and $v \in \cH_0^{s_n}(\Omega)$ we have 
\begin{align*}
\cE_{s'}(v,v) &= \int_{\R^N}|\xi|^{2s'} |\hat v|^2 d\xi \le \eps^{2s'} \|v\|_{L^2(\R^N)}^2 + \int_{|\xi| \ge \eps}|\xi|^{2s'} |\hat v|^2 d\xi \\
&\le \eps^{2s'} \|v\|_{L^2(\R^N)}^2 + \eps^{2(s'-s_n)} \int_{\R^N}|\xi|^{2s_n} |\hat v|^2 d\xi \le \eps^{2s'}C_{s'}\cE_{s'}(v,v) + 
\eps^{2s'-2}\cE_{s_n}(v,v),
\end{align*}
where $\hat v$ denotes the Fourier transform of $v$.  Choosing $\eps:= \min \bigl \{1, \bigl(2C_{s'}\bigr)^{-\frac{1}{2s'}} \bigr\}$ yields  
\begin{equation}
  \label{eq:s-prime-est}
\cE_{s'}(v,v) \le C \cE_{s_n}(v,v) \qquad \text{for $v \in \cH_0^{s_n}(\Omega)$, $n \in \N$ with $C= 2 \eps^{2s'-2}$.}
\end{equation}
Moreover, by the definition of weak solution, 
\begin{align*}
\cE_{s_n}(u_n,u_n) &= \int_\Omega f_n u_n \,dx \le \|f_n\|_{L^2(\Omega)} \|u_n\|_{L^2(\Omega)} \le
C_{s'} \|f_n\|_{L^2(\Omega)} \sqrt{\cE_{s'}(u_n,u_n)} \\
&\le \sqrt{C_{s'}C} \|f_n\|_{L^2(\Omega)} \sqrt{\cE_{s_n}(u_n,u_n)}.
\end{align*}
Consequently,
\begin{equation}
  \label{eq:s-n-est}
\cE_{s'}(u_n,u_n) \le C C_{s'} \|f_n\|_{L^2(\Omega)}^2.
\end{equation}
By (\ref{eq:s-n-est}), it follows that the sequence $(u_n)_n$ is bounded in $\cH^{s'}_0(\Omega)$.
We now suppose by contradiction that, passing to a subsequence, 
 \begin{equation}
   \label{eq:contradiction-sobolev-convergence}
 \cE_{s_0-\delta}(u_{s_0}-u_n,u_{s_0}-u_n) \ge \rho>0 \qquad \text{for all $n \in \N$.}
 \end{equation}
Passing to a subsequence again and using the compactness of the embedding $\cH^{s'}_0(\Omega) \hookrightarrow \cH^{s-\delta}_0(\Omega)$, we may assume that $u_n \to u_* \in \cH^{s-\delta}_0(\Omega)$ for some $u_* \in \cH^{s-\delta}_0(\Omega)$, which also implies that
$u_n \to u_* \in L^2(\R^N)$ and therefore $\widehat{u_n} \to \widehat{u_*} \in L^2(\R^N)$.  Passing to a subsequence again, we may also assume that $\widehat{u_n} \to \widehat{u_*}$ a.e. in $\R^N$. Consequently, by Fatou's Lemma and (\ref{eq:s-n-est}),  
	\begin{align*}
		\cE_{s_0}(u_*,u_*)&=\int_{\R^N}|\xi|^{2{s_0}} |\hat u_*(\xi)|^2\ d\xi\\
		&\leq \liminf_{n\to \infty} \int_{\R^N}|\xi|^{2s_n}|\hat u_n(\xi)|^2\ d\xi =\liminf_{n\to \infty} \cE_{s_n}(u_n,u_n)\le  C C_{s'} \|f_0\|_{L^2(\Omega)}^2
	\end{align*}
	and therefore $u_*\in \cH_0^{s_0}(\Omega)$. Finally, since $C^{\infty}_c(\Omega) \subset \cH_0^{s_0}(\Omega)$ is dense and 
	\begin{align*}
		(f_0,\phi)_{L^2}=\lim_{n\to \infty}(f_n,\phi)_{L^2} = \lim_{n\to \infty}\cE_{s_n}(u_n,\phi)=\lim_{n\to \infty}(u_n,(-\Delta)^{s_n}\phi)_{L^2}&=(u_*,(-\Delta)^{s_0}\phi)_{L^2}\\
&=\cE_{s_0}(u_*,\phi)
	\end{align*}
	for all $\phi\in C^{\infty}_c(\Omega)$, we obtain that $u_*\equiv u_{s_0}$ by the uniqueness stated in Remark~\ref{green-weak-solution}. This contradicts \eqref{eq:contradiction-sobolev-convergence} and the claim is proved. 
\end{proof}

\begin{lemma}\label{lemma:continuity-smooth}
Let $s_0 \in (0,1]$, $\delta\in(0,\frac{s_0}{2})$, let $(s_n)_n\subset (s_0-\delta,\max\{1,s_0+\delta\}]$ be a sequence with $s_n \to s_0$, 
Moreover, let $(f_n)_n \subset L^\infty(\Omega)$ be a sequence with $f_n \to f_0 \in L^\infty(\Omega)$. Then we have
\[
\cGs_{s_n} f_n \to \cGs_{s_0} f_0 \quad\text{in $C^{s_0-2\delta}_0(\Omega)$ as $s\to s_0$.}
\]
\end{lemma}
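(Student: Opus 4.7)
The strategy is to combine the $\cH_0^{s_0-\delta}(\Omega)$-convergence from Lemma~\ref{lemma:continuity} with a uniform Hölder regularity estimate provided by Lemma~\ref{G-properties}(ii), and then exploit compactness together with a subsequence principle. Set $u_n := \cGs_{s_n} f_n$ and $u_0 := \cGs_{s_0} f_0$. By Lemma~\ref{G-properties}(ii), together with the $s$-independence of the constant $C_1$ in~\eqref{omega-bar-reg} (proved in the appendix), we have
\[
\|u_n\|_{C^{s_n}(\overline\Omega)} \;\le\; C_1\, \|f_n\|_{L^\infty(\Omega)}.
\]
Since $s_n > s_0-\delta$ by assumption, $\Omega$ is bounded, and the elementary interpolation $[w]_{C^\beta(\overline\Omega)} \le (\mathrm{diam}\,\Omega)^{\alpha-\beta}[w]_{C^\alpha(\overline\Omega)}$ holds for $\alpha \ge \beta$, the convergent (hence bounded) sequence $(f_n)_n$ in $L^\infty(\Omega)$ yields a uniform bound on $(u_n)_n$ in $C^{s_0-\delta}(\overline\Omega)$.

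The next step is to invoke the compactness of the embedding $C^{s_0-\delta}(\overline\Omega) \hookrightarrow C^{s_0-2\delta}(\overline\Omega)$, which holds by Arzelà--Ascoli and the fact that $s_0-2\delta>0$ (thanks to $\delta < s_0/2$). Accordingly, any subsequence of $(u_n)_n$ admits a further subsequence $(u_{n_k})_k$ converging in the $C^{s_0-2\delta}(\overline\Omega)$-norm to some limit $u^*$, and a fortiori in $L^2(\Omega)$. On the other hand, since $\|f_n - f_0\|_{L^2(\Omega)} \le |\Omega|^{1/2}\|f_n - f_0\|_{L^\infty(\Omega)} \to 0$, Lemma~\ref{lemma:continuity} applied to the subsequence gives $u_{n_k} \to u_0$ in $\cH_0^{s_0-\delta}(\Omega) \hookrightarrow L^2(\Omega)$. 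Uniqueness of the $L^2$-limit forces $u^* = u_0$.

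Because every subsequence of $(u_n)_n$ contains a further subsequence converging to the same limit $u_0$ in $C^{s_0-2\delta}(\overline\Omega)$, the standard subsequence principle yields $u_n \to u_0$ in $C^{s_0-2\delta}(\overline\Omega)$. Each $u_n$ vanishes on $\partial\Omega$ by construction and this property is preserved under uniform convergence, so $u_0$ vanishes on $\partial\Omega$ as well, placing the convergence in $C^{s_0-2\delta}_0(\Omega)$. The main technical obstacle is precisely the $s$-uniformity of the Hölder constant $C_1$ in Lemma~\ref{G-properties}(ii); we rely on the appendix for that. A minor care point is the borderline value $s_0 = 1$, where $\cGs_1$ is the classical Dirichlet Green operator and standard elliptic regularity for $-\Delta u = f \in L^\infty$ on a $C^2$ domain produces the analogous uniform Hölder estimate, so the argument proceeds without modification.
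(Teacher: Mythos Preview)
Your proof is correct and follows essentially the same route as the paper: uniform $C^{s_0-\delta}$-bounds from \eqref{omega-bar-reg} with $s$-independent constant, compact embedding into $C^{s_0-2\delta}$ via Arzel\`a--Ascoli, identification of the limit through the $L^2$-convergence from Lemma~\ref{lemma:continuity}, and a subsequence argument. Your version is somewhat more explicit (spelling out the interpolation step, the subsequence principle, and the borderline case $s_0=1$), but the structure is identical.
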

\begin{proof}
Let $u_n= \cGs_{s_n} f_n$ for $n \in \N \cup \{0\}$. Then Lemma \ref{lemma:continuity} gives that $u_n\to u_{0}$ in $L^2(\Omega)$ as $n\to\infty$.
Moreover, from (\ref{omega-bar-reg}) one may deduce that $(u_n)_n$ is a bounded sequence in $C^{s_0-\delta}_0(\overline \Omega)$, where we have used that the constant in \eqref{omega-bar-reg} is independent of $s$.

Since $C^{s_0-\delta}_0(\overline \Omega)$ is compactly embedded into $C^{s_0-2\delta}_0(\overline \Omega)$ by the Arzela-Ascoli Theorem and $C^{s_0-2\delta}_0(\overline \Omega)$ is embedded in $L^2(\Omega)$, it then follows by a standard argument that $u_n \to u_{0}$ in $C^{s_0-2\delta}_0(\overline \Omega)$ as $n \to \infty$.
\end{proof}

\section{Differentiability of the solution map in \texorpdfstring{$(0,1)$}{(0,1)}}\label{sec:main-part}

Throughout this section, let $N \ge 2$, and let $\Omega \subset \R^N$ be an open and bounded set with $C^2$-boundary.
Recall the definition of the extension and restriction maps $\cEO$ and $\cRO$ as introduced in Section \ref{sec:solution-map-related}.  The aim of this section is to prove the following. 

\begin{thm}\label{thm1:differential-section}
Let $f\in C^\alpha(\overline \Omega)$ for some $\alpha>0$. Then the map 
$$
(0,1) \to L^\infty(\Omega), \qquad s \mapsto \cGs_s f
$$
is of class $C^1$ and  
\begin{equation}\label{thm1:eq2-section}
\frac{d}{ds} \cGs_s f= \cGs_s \cRO \Bigl(\loglap \cQs_s f- \loglap \cEO f \Bigr) \qquad \text{for $s \in (0,1)$.}
\end{equation}
\end{thm}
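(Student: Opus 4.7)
The plan is to establish the formula (\ref{thm1:eq2-section}) through a difference-quotient argument and then deduce $C^1$ regularity from the continuity results of Section~\ref{sec:continuity}. The formal computation motivating the statement is as follows: by (\ref{Q:rep}), $(-\Delta)^s u_s = \cEO f - \cQs_s f$ on $\R^N$, and differentiating the identity $(-\Delta)^s u_s = f$ in $\Omega$ using the Fourier-multiplier relation $\partial_s |\xi|^{2s} = 2\ln|\xi| \cdot |\xi|^{2s}$ yields, for $v_s = \partial_s u_s$,
\begin{equation*}
(-\Delta)^s v_s = -\bigl[\loglap(-\Delta)^s u_s\bigr]\big|_\Omega = -\loglap(\cEO f - \cQs_s f)\big|_\Omega = \cRO\bigl(\loglap \cQs_s f - \loglap \cEO f\bigr),
\end{equation*}
together with $v_s \equiv 0$ on $\R^N \setminus \Omega$, which is exactly (\ref{thm1:eq2-section}).

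To make this rigorous, I would fix $s \in (0,1)$ and $h \in \R$ small enough that $s+h \in (0,1)$, and exploit the semigroup identity $(-\Delta)^{s+h} = (-\Delta)^h(-\Delta)^s$ (the Fourier-level counterpart of Lemma~\ref{semigroup}) to write $(-\Delta)^{s+h} u_s = (-\Delta)^h(\cEO f - \cQs_s f)$ on $\R^N$. Subtracting this identity from $(-\Delta)^{s+h} u_{s+h} = f$ in $\Omega$, invoking $u_{s+h} - u_s \equiv 0$ on $\R^N \setminus \Omega$, and using $\cRO \cQs_s f \equiv 0$ to symmetrize the two contributions, I obtain
\begin{equation*}
\frac{u_{s+h} - u_s}{h} = -\cGs_{s+h}\cRO\Bigl(\frac{(-\Delta)^h \cEO f - \cEO f}{h}\Bigr) + \cGs_{s+h}\cRO\Bigl(\frac{(-\Delta)^h \cQs_s f - \cQs_s f}{h}\Bigr).
\end{equation*}

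Passing to the limit $h \to 0$, I would apply the derivative-at-zero identity for $(-\Delta)^h$ from \cite[Theorem 1.1]{CW18}---the same ingredient behind Lemma~\ref{rn-case} and Proposition~\ref{derivative-zero}---to both $\phi = \cEO f$ and $\phi = \cQs_s f$, obtaining $\frac{(-\Delta)^h \phi - \phi}{h} \to \loglap \phi$ in a suitable almost uniform sense on $\Omega$. Combining with continuity of the solution map $t \mapsto \cGs_t g$ from Lemma~\ref{lemma:continuity-smooth} then yields (\ref{thm1:eq2-section}). To upgrade to $C^1$, I would verify that the right-hand side of (\ref{thm1:eq2-section}) depends continuously on $s$: continuity of $s \mapsto \cQs_s f$ is inherited from the continuity of $s \mapsto \cGs_s f$ via the defining relation $\cQs_s f = -(-\Delta)^s \cGs_s f$ off $\overline\Omega$, and continuity of the associated logarithmic Laplacians follows by dominated convergence from the uniform pointwise bounds in Lemma~\ref{qs-bound} and Remark~\ref{log-bound-remark-1}.

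The principal technical difficulty will be to control the two difference quotients in a space on which the Green operators $\cGs_{s+h}$ act continuously, with bounds uniform in $h$ as $h \to 0$. Both arguments are boundary-singular: one has $|\cQs_s f(y)| \leq C\deltaO(y)^{-s}$ by Lemma~\ref{qs-bound}, while $\loglap \cEO f$ blows up only logarithmically at $\partial\Omega$ by Remark~\ref{log-bound-remark-1}, so the two quotients can be controlled only in a $\deltaO^\eps$-weighted $L^\infty$-norm. Passing to the limit inside $\cGs_{s+h}$ must therefore be justified via the singular-integral estimates of Section~\ref{sec:preliminary-estimates}, in particular Lemma~\ref{int:la} and Corollary~\ref{est:l-new}, which should ensure that $\cGs_{s+h}$ sends such weighted spaces into $L^\infty(\Omega)$ with operator bounds uniform in $h$ on a neighborhood of~$s$.
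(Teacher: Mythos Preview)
Your proposal is correct in outline and takes a genuinely different route from the paper's proof, though both ultimately rest on the singular-integral estimates of Section~\ref{sec:preliminary-estimates}.

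The paper works exclusively with the Riesz potential $\cFs_\sigma$ (so $\sigma>0$) and the semigroup identity of Lemma~\ref{semigroup}, arriving---after a somewhat elaborate manipulation that includes proving an auxiliary function $w_\sigma$ is $s$-harmonic with zero exterior data and hence vanishes---at
\[
\frac{\cGs_{s+\sigma}f-\cGs_s f}{\sigma}=\cGs_s\,\cRO\Bigl(\tfrac{\cFs_\sigma-\id}{\sigma}\cEO f-\tfrac{\cFs_\sigma}{\sigma}\cQs_{s+\sigma}f\Bigr),
\]
with the Green operator \emph{fixed} at level $s$ and $\cQs$ at level $s+\sigma$. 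Only the right derivative is computed; full $C^1$ regularity is then obtained from continuity of the right derivative via the abstract Lemma~\ref{abstract-lemma-C-1}. Your decomposition swaps these roles: the Green operator moves to level $s+h$ while $\cQs$ stays fixed at $s$, and you aim directly at the two-sided derivative. Your difference-quotient formula is considerably shorter to derive; in exchange you incur two extra obligations. First, for $h>0$ the identity $(-\Delta)^{s+h}u_s=(-\Delta)^h[(-\Delta)^su_s]$ in $\Omega$ is not covered by Lemma~\ref{semigroup} (which treats only Riesz potentials) and needs its own justification---this is routine via the Fourier side once one checks the pointwise formulas are well defined, but it should be made explicit. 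Second, you must pass to the limit simultaneously in $\cGs_{s+h}$ and in the integrand, which requires uniform-in-$h$ mapping bounds for $\cGs_{s+h}$ on the $\deltaO^{-(s+\eps)}$-weighted space; this is essentially Lemma~\ref{k-s-eps-cor} together with an argument like the one the paper uses in its continuity step (\ref{sec:cont-deriv}). The paper's route has a cleaner limit (only the integrand moves) at the cost of the longer algebra and the extra Lemma~\ref{abstract-lemma-C-1}; your route trades that algebra for a joint-limit argument.
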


We need some preliminary estimates.

\begin{lemma}\label{gsigma:l}
Let $f \in L^\infty(\Omega)$, $s \in (0,1)$ and $\eps \in (0,\min\{s,1-s\})$. Then we have 
\begin{equation}
  \label{eq:Q-difference-1}
\lim_{\sigma \to 0}\: \bigl\| \bigl(\deltaO^{s+\eps}+ \deltaO^{N+2s-\eps}\bigr)[(\cQs_{s+\sigma}-\cQs_s)f]\bigr\|_{L^\infty(\R^N \setminus \Omega)}=0
\end{equation}
and 
\begin{equation}
  \label{eq:Q-difference-2}
\lim_{\sigma \to 0^+}\: \frac{1}{\sigma} \bigl\|\deltaO^{s+\eps} [\cFs_\sigma (\cQs_{s+\sigma}-\cQs_s)f]\bigr\|_{L^\infty(\Omega)}=0.
\end{equation}
\end{lemma}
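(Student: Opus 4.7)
The plan is to use the explicit representation, valid from \eqref{eq:def-w-s}, namely
$$
[\cQs_s f](x) = c_{N,s}\int_\Omega \frac{u_s(y)}{|x-y|^{N+2s}}\,dy, \qquad x \in \R^N \setminus \overline{\Omega},
$$
where $u_s := \cGs_s f$, and likewise for $\cQs_{s+\sigma} f$ (with $s$ replaced by $s+\sigma$). For \eqref{eq:Q-difference-1} I would split
\begin{align*}
[(\cQs_{s+\sigma}-\cQs_s)f](x) &= c_{N,s+\sigma} \int_\Omega \frac{u_{s+\sigma}(y)-u_s(y)}{|x-y|^{N+2(s+\sigma)}}\,dy \\
&\quad + \int_\Omega u_s(y) \Bigl(\frac{c_{N,s+\sigma}}{|x-y|^{N+2(s+\sigma)}} - \frac{c_{N,s}}{|x-y|^{N+2s}}\Bigr)\,dy =: I_1(x)+I_2(x).
\end{align*}
The term $I_1$ captures the $\sigma$-dependence of the unknown $u_s$, while $I_2$ captures that of the kernel.

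For $I_1$, Lemma~\ref{lemma:continuity-smooth} with $s_0=s$ gives $u_{s+\sigma}\to u_s$ in $C^{s-\delta}_0(\overline\Omega)$ for sufficiently small $\delta>0$, so that $w_\sigma:=u_{s+\sigma}-u_s$ satisfies $|w_\sigma(y)|\le \eta_1(\sigma)\,\deltaO(y)^{s-\delta}$ with $\eta_1(\sigma)\to 0$. For $I_2$ I would control the difference of kernels using Lemma~\ref{general-est} (which yields $||x-y|^{-2(s+\sigma)}-|x-y|^{-2s}|\le \tfrac{\sigma}{\eps_0}(|x-y|^{-2s+\eps_0}+|x-y|^{-2s-\eps_0})$) together with the smoothness of $s\mapsto c_{N,s}$ near $s$ and the pointwise bound $|u_s(y)|\le C\|f\|_{L^\infty}\deltaO(y)^s$ from \eqref{sec:notation:boundary-reg}. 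Applying Lemma~\ref{int:la} for both $x\in \R^N\setminus B_{r_\Omega}(x_0)$ and $x$ close to $\partial\Omega$ (with appropriate choices of $a$ and $\lambda$) produces bounds for $I_1(x)$ and $I_2(x)$ of the form $C/(1+\deltaO(x)^{N-\lambda})$ times a factor that vanishes as $\sigma\to 0$. The composite weight $\deltaO(x)^{s+\eps}+\deltaO(x)^{N+2s-\eps}$ is precisely calibrated to neutralize these bounds in both regimes (near $\partial\Omega$ and at infinity), yielding \eqref{eq:Q-difference-1}.

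For \eqref{eq:Q-difference-2}, observe that since $\cQs_{s+\sigma}f$ and $\cQs_s f$ vanish on $\overline\Omega$, we have for $x\in \Omega$
$$
[\cFs_\sigma(\cQs_{s+\sigma}-\cQs_s)f](x) = \kappa_{N,\sigma}\int_{\R^N \setminus \Omega}|x-z|^{2\sigma-N}[(\cQs_{s+\sigma}-\cQs_s)f](z)\,dz,
$$
and $\kappa_{N,\sigma}/\sigma \to c_N$ as $\sigma\to 0^+$. Part~\eqref{eq:Q-difference-1} gives
$$
|[(\cQs_{s+\sigma}-\cQs_s)f](z)| \le \frac{\eta(\sigma)}{\deltaO(z)^{s+\eps}+\deltaO(z)^{N+2s-\eps}}, \qquad \eta(\sigma)\to 0.
$$
Applying Corollary~\ref{est:l-new} with $a=s+\eps$, $b=2s-\eps$, $\nu=2\sigma-N$, and $\lambda$ taken slightly below $a$ (for example $\lambda=2\sigma$, verifying $\lambda\in[-N,1)$ and $\nu\in[\lambda-N,b]$), I obtain the uniform bound
$\deltaO(x)^{s+\eps}\, m(s+\eps,\lambda,\deltaO(x))\le C$ independent of $x\in\Omega$. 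Multiplying by $\kappa_{N,\sigma}/\sigma=O(1)$ and $\eta(\sigma)=o(1)$ delivers \eqref{eq:Q-difference-2}.

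The main obstacle is the bookkeeping in \eqref{eq:Q-difference-1}: the auxiliary function $m(a,\lambda,\delta)$ of Lemma~\ref{int:la} has three cases according as $\lambda\gtreqless a$, and one must choose the exponents so that the resulting bound, multiplied by $\deltaO(x)^{s+\eps}+\deltaO(x)^{N+2s-\eps}$, stays $o(1)$ uniformly in both regimes $x$ near $\partial\Omega$ and $x$ far from $\Omega$. The two weight terms are specifically designed for these two regimes, but verifying all constraints (for instance, $a<1$, which requires $\eps<1-s$, and $b\ge 0$, which requires $\eps\le s$) and tracking constants through the kernel splitting is the delicate point.
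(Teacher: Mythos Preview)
Your proposal is correct and follows essentially the same route as the paper's proof. The paper splits $(\cQs_{s+\sigma}-\cQs_s)f$ into three terms (separating the constant difference $c_{N,s+\sigma}-c_{N,s}$, the kernel difference, and the solution difference) rather than your two, and it places $u_{s+\sigma}$ in the kernel-difference term and the fixed kernel $|y-z|^{-N-2s}$ in the solution-difference term; but this is only a cosmetic rearrangement, and the same tools---Lemma~\ref{lemma:continuity-smooth} for $u_{s+\sigma}\to u_s$ in $C^{s-\eps}$, Lemma~\ref{general-est} for the kernel difference, Lemma~\ref{int:la} for the resulting integrals, and Corollary~\ref{est:l-new} with $a=s+\eps$, $b=2s-\eps$ (the paper takes $\lambda=0$) for \eqref{eq:Q-difference-2}---are used in both.
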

We point out here that in (\ref{eq:Q-difference-1}) a two-sided limit is considered, whereas we may only consider a one-sided limit in (\ref{eq:Q-difference-2}).

\begin{proof} 
We start by showing (\ref{eq:Q-difference-1}). In the following, let $u_s:= \cGs_s f$ for $s \in (0,1)$. For fixed $s \in (0,1)$ and $\eps \in (0,s)$, we then have 
 \begin{equation}
\label{lemma:continuity-smooth-consequence}   
\lim_{\sigma \to 0}\|u_{s+\sigma}-u_s\|_{C^{s-\eps}(\overline{\Omega})}=0 
 \end{equation}
by Lemma \ref{lemma:continuity-smooth}, which, since $u_{s+\sigma}-u_s=0$ on $\partial\Omega$, implies that
\begin{equation}\label{hol:est}
\lim_{\sigma \to 0}\|\deltaO^{\eps-s}(\cdot)(u_{s+\sigma}-u_s)\|_{L^\infty(\Omega)}=0. 
\end{equation}
In the following, $C>0$ denotes possibly different constants depending at most on $\eps$, $\Omega$, and $s$. 
Let first $\sigma \in (0,\min\{1-s,\frac{\eps}{4}\})$. By \eqref{hol:est}, we have, for $y \in \R^N \setminus \Omega$,  
\begin{align*}
|\cQs_{s+\sigma}f(y)-&\cQs_{s}f(y)|
\leq \int_\Omega \left| \frac{c_{N,s+\sigma} u_{s+\sigma}(z)}{|y-z|^{N+2s+2\sigma}}-\frac{c_{N,s} u_{s}(z)}{|y-z|^{N+2s}}
\right|\ dz\\
&\leq |c_{N,s+\sigma}-c_{N,s}| \int_\Omega \frac{|u_{s}(z)|}{|y-z|^{N+2s}}dz\\
&+ c_{N,s+\sigma} \int_\Omega \Bigl( |u_{s+\sigma}(z)|\left||y-z|^{-N-2s-2\sigma}-|y-z|^{-N-2s}\right|+\frac{|u_{s+\sigma}(z)-u_{s}(z)|}{|y-z|^{N+2s}}
\Bigr) dz\\
&\leq \sigma\,C \int_\Omega \frac{\deltaO(z)^{s}}{|y-z|^{N+2s}}dz + 
\sigma\,C   \int_\Omega \deltaO(z)^{s+\sigma}\bigl(|y-z|^{-N-2s-2\sigma-\frac{\eps}{2}}+|y-z|^{-N-2s+\frac{\eps}{2}}\bigr)dz\\
&+\|\deltaO^{\eps-s}(\cdot)(u_{s+\sigma}-u_s)\|_{L^\infty(\Omega)} 
\int_\Omega \frac{\deltaO(z)^{s-\eps}}{|y-z|^{N+2s}}
\ dz,
\end{align*}
where, by Lemma~\ref{int:la} and using the boundedness of $\Omega$, 
\begin{align*}
\int_\Omega \frac{\deltaO(z)^{s}}{|y-z|^{N+2s}}dz &\le \frac{C m(-s,-2s,\deltaO(y))}{1+\deltaO(y)^{N+2s}} \le C \frac{1+ \deltaO(y)^{-s-\eps}}{1+\deltaO(y)^{N+2s}}\le \frac{C}{\deltaO^{s+\eps}(y)+\deltaO^{N+2s-\eps}(y)},\\
\int_\Omega \deltaO(z)^{s+\sigma}\bigl(|y-&z|^{-N-2s-2\sigma-\frac{\eps}{2}}+|y-z|^{-N-2s+\frac{\eps}{2}}\bigr)dz\\
&\le 
C  \int_\Omega \deltaO(z)^{s}\bigl(|y-z|^{-N-2s-\eps}+|y-z|^{-N-2s+\eps}\bigr)dz\\
&\le C \Bigl(\frac{m(-s,-2s-\eps,\deltaO(y))}{1+\deltaO(y)^{N+2s+\eps}} + \frac{m(-s,-2s+\eps,\deltaO(y))}{1+\deltaO(y)^{N+2s-\eps}}\Bigr) \\
&\le C \Bigl(\frac{1+ \deltaO(y)^{-s-\eps}}{1+\deltaO(y)^{N+2s+\eps}} + \frac{1+ \deltaO(y)^{-s+\eps}}{1+\deltaO(y)^{N+2s-\eps}}\Bigr)  \le \frac{C}{\deltaO^{s+\eps}(y)+\deltaO^{N+2s-\eps}(y)}
\end{align*}
and 
$$
\int_\Omega \frac{\deltaO(z)^{s-\eps}}{|y-z|^{N+2s}}
\ dz \le C m(\eps-s,-2s,\deltaO(y))\le C \frac{1+\deltaO(y)^{\eps-s}}{1+\deltaO(y)^{N+2s-\eps}} \le \frac{C}{\deltaO^{s+\eps}(y)+\deltaO^{N+2s-\eps}(y)}. 
$$
Combining these estimates with (\ref{lemma:continuity-smooth-consequence}) we deduce that 
\begin{equation}
  \label{eq:Q-difference-1-plus}
\bigl\| \bigl(\deltaO^{s+\eps}+ \deltaO^{N+2s-\eps}\bigr)[(\cQs_{s+\sigma}-\cQs_s)f]\bigr\|_{L^\infty(\R^N \setminus \Omega)}\to 0 \qquad \text{as $\sigma \to 0^+$.}
\end{equation}
Next, we let $\sigma \in (-\min \{s,\frac{\eps}{4}\},0)$. Similarly as above, we then obtain the estimate
\begin{align*}
&|\cQs_{s+\sigma}f(y)-\cQs_{s}f(y)|\\
&\leq |\sigma|\,C \int_\Omega \frac{\deltaO(z)^{s}}{|y-z|^{N+2s}}dz + 
|\sigma|\,C   \int_\Omega \deltaO(z)^{s+\sigma}\bigl(|y-z|^{-N-2s-2\sigma+\frac{\eps}{2}}+|y-z|^{-N-2s-\frac{\eps}{2}}\bigr)dz\\
&+\|\deltaO^{\eps-s}(\cdot)(u_{s+\sigma}-u_s)\|_{L^\infty(\Omega)} 
\int_\Omega \frac{\deltaO(z)^{s-\eps}}{|y-z|^{N+2s}}
\ dz\\
&= o(1) \bigl(\deltaO^{s+\eps}(y)+\deltaO^{N+2s-\eps}(y)\bigr)^{-1} + 
|\sigma|\,C   \int_\Omega \deltaO(z)^{s+\sigma}\bigl(|y-z|^{-N-2s-2\sigma+\frac{\eps}{2}}+|y-z|^{-N-2s-\frac{\eps}{2}}\bigr)dz,
\end{align*}
where,  by Lemma~\ref{int:la}, 
\begin{align*}
\int_\Omega &\deltaO(z)^{s+\sigma}\bigl(|y-z|^{-N-2s-2\sigma+\frac{\eps}{2}}+|y-z|^{-N-2s-\frac{\eps}{2}}\bigr)dz\\ 
&\le C  \int_\Omega \deltaO(z)^{s-\frac{\eps}{2}}\bigl(|y-z|^{-N-2s+\eps}+|y-z|^{-N-2s-\frac{\eps}{2}}\bigr)dz\\
&\le C \Bigl(\frac{m(\frac{\eps}{2}-s,-2s+\eps,\deltaO(y))}{1+\deltaO(y)^{N+2s-\eps}} + \frac{m(\frac{\eps}{2}-s,-2s-\frac{\eps}{2},\deltaO(y))}{1+\deltaO(y)^{N+2s+\eps}}\Bigr) \\
&\le C \Bigl(\frac{1+ \deltaO(y)^{\frac{\eps}{2}-s}}{1+\deltaO(y)^{N+2s-\eps}} + \frac{1+ \deltaO(y)^{-s-\eps}}{1+\deltaO(y)^{N+2s+\eps}}\Bigr) \le \frac{C}{\deltaO^{s+\eps}(y)+\deltaO^{N+2s-\eps}(y)}.
\end{align*}
We thus find that (\ref{eq:Q-difference-1-plus}) also holds as $\sigma \to 0^-$, and thus (\ref{eq:Q-difference-1}) follows.  To see (\ref{eq:Q-difference-2}), we note that, for $x \in \Omega$, 
\begin{align*}
\frac{1}{\sigma}& |[\cFs_\sigma (\cQs_{s+\sigma}-\cQs_s)f](x)| = \frac{\kappa_{N,\sigma}}{\sigma}\Bigl|\int_{\R^N \setminus \Omega}|x-y|^{2\sigma-N}[(\cQs_{s+\sigma}-\cQs_s)f](y)\,dy \Bigr|\\    
&\le C \bigl\| \bigl(\deltaO^{s+\eps}+ \deltaO^{N+2s-\eps}\bigr)[(\cQs_{s+\sigma}-\cQs_s)f]\bigr\|_{L^\infty(\R^N \setminus \Omega)}
\int_{\R^N \setminus \Omega} \frac{|x-y|^{2\sigma-N}}{\deltaO^{s+\eps}(y)+ \deltaO^{N+2s-\eps}(y)}\,dy,
\end{align*}
where, by Corollary~\ref{est:l-new}, applied with $a = s+ \eps$, $b = 2s-\eps$, $\lambda = 0$, and $\nu=2 \sigma-N$, 
 $$
\int_{\R^N \setminus \Omega} \frac{|x-y|^{2\sigma-N}}{\deltaO^{s+\eps}(y)+ \deltaO^{N+2s-\eps}(y)}\,dy \le 
C m(s+\eps,0,\deltaO(x)) \le \frac{C}{1-s-\eps}\Bigl(1+\frac{\deltaO^{-s-\eps}(x)}{s+\eps}\Bigr)\le C \deltaO^{-s-\eps}(x).
$$
Here we used the boundedness of $\Omega$ in the last step. Together with (\ref{eq:Q-difference-1}), these estimates yield (\ref{eq:Q-difference-2}).
\end{proof}

\begin{lemma}\label{FQ:l-prelim} Let $f \in L^\infty(\Omega)$, $s \in (0,1)$ and $\eps \in (0,\min\{s,1-s\})$. Then
$$
\lim_{\sigma \to 0^+} \bigl \|\deltaO^{s+\eps}\bigl[ \bigl(\frac{\cFs_\sigma}{\sigma} -\loglap \bigr) \cQs_{s} f\bigr] \bigr \|_{L^\infty(\Omega)}= 0.
$$
\end{lemma}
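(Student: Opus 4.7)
The strategy parallels the proof of Lemma \ref{rn-case}(i), adapted to accommodate the boundary singularity of $g := \cQs_s f$. The key simplifying observation is that $g \equiv 0$ on $\overline \Omega$, so $g(x) = 0$ for every $x \in \Omega$. Consequently the principal value in (\ref{eq:log-laplace}) collapses to an ordinary integral and the scalar term $\rho_N g(x)$ vanishes, so that both $\loglap g(x)$ and $\frac{\cFs_\sigma g(x)}{\sigma}$ admit clean integral representations over $\R^N \setminus \Omega$. Splitting the integration as in the proof of Lemma \ref{rn-case}, the scalar correction of the form $\frac{\kappa_{N,\sigma}}{\sigma^2 c_N} - \frac{1}{\sigma} + \rho_N = O(\sigma)$ is multiplied by $g(x) = 0$ and drops out, leaving a single integral of $g(y)$ against a kernel essentially of the shape $\bigl[\frac{\kappa_{N,\sigma}}{\sigma}|x-y|^{2\sigma} - c_N\bigr]|x-y|^{-N}$.

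Using the smooth extension of $\tau(\sigma) := \kappa_{N,\sigma}/\sigma$ at $\sigma = 0$ to get $|\frac{\kappa_{N,\sigma}}{\sigma} - c_N| \le C\sigma$, together with Lemma \ref{general-est} applied with an auxiliary parameter $\eta \in (0,\eps)$, one obtains
$$
\Bigl|\frac{\kappa_{N,\sigma}}{\sigma}|z|^{2\sigma} - c_N\Bigr| \le \frac{\sigma\, C}{\eta}\bigl(|z|^{2\sigma} + |z|^{2\sigma+\eta} + |z|^{-\eta}\bigr),
$$
so the weighted $L^\infty$-norm to be controlled is at most $\sigma C/\eta$ times three integrals of the form $\int_{\R^N\setminus \Omega}|x-y|^\nu |g(y)|\,dy$ with $\nu \in \{2\sigma - N,\, 2\sigma + \eta - N,\, -\eta - N\}$.

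Each such integral is handled by Corollary \ref{est:l-new} with $a = s$ and $b = 2s$, using the pointwise bound $|g(y)| \le C(\deltaO^s(y) + \deltaO^{N+2s}(y))^{-1}$ from estimate (\ref{Q:est-1}) of Lemma \ref{qs-bound}. The parameter $\lambda$ is taken to be $\lambda = 0$ for the first two integrals, where $\nu \ge -N$ holds with room to spare, and $\lambda = -\eta$ for the third, so that $\nu = -\eta - N = \lambda - N$ just meets the hypothesis $\nu \ge \lambda - N$ of the corollary. In all three cases the resulting bound is $C\, m(s,\lambda,\deltaO(x))$, and the definition of $m$ in the case $\lambda < a = s$ gives $m(s,\lambda,\deltaO(x)) \le C_\eta\,\deltaO(x)^{-s-\eta}(1+|\ln \deltaO(x)|)$.

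Multiplying by the prescribed weight $\deltaO(x)^{s+\eps}$ yields $C_\eta\,\deltaO(x)^{\eps-\eta}(1+|\ln \deltaO(x)|)$, which is bounded uniformly on the bounded set $\Omega$ since $\eta < \eps$. Therefore
$$
\bigl\|\deltaO^{s+\eps}\bigl[\tfrac{\cFs_\sigma}{\sigma} - \loglap\bigr]\cQs_s f\bigr\|_{L^\infty(\Omega)} \le \sigma\,\tfrac{C_\eta}{\eta}\,\|f\|_{L^\infty(\Omega)},
$$
which tends to $0$ as $\sigma \to 0^+$, completing the argument. The main delicate point is the parameter bookkeeping in this final step: one must pick $\eta < \eps$ \emph{and} the non-standard choice $\lambda = -\eta$ (rather than $\lambda = 0$) for the third integral, so that the hypotheses of Corollary \ref{est:l-new} are all met while the resulting singularity in $\deltaO(x)$ is still exactly absorbed by the prescribed weight.
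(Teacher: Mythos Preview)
Your proof is correct and follows essentially the same route as the paper: bound the kernel difference via $|\tfrac{\kappa_{N,\sigma}}{\sigma}-c_N|\le C\sigma$ together with Lemma~\ref{general-est}, then control the resulting convolutions over $\R^N\setminus\Omega$ by combining the decay estimate (\ref{Q:est-1}) with Corollary~\ref{est:l-new}. The only cosmetic difference is that the paper applies Lemma~\ref{general-est} with the parameter $\eps$ itself and uses the sharper branch $\min\{(a-\lambda)^{-1},\dots\}$ of $m(a,\lambda,\delta)$ to get $m(s,-\eps,\deltaO(x))\le C\,\deltaO(x)^{-s-\eps}$ without a logarithm, so no auxiliary $\eta<\eps$ is needed; your extra parameter is harmless but superfluous.
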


\begin{proof}
In the following, let $C>0$ denote constants depending only on $N,\Omega,s$ and $\eps.$ For $x \in \Omega$ we have, using Lemma~\ref{general-est},  
\begin{align*}
\Bigl| &\bigl[ \bigl(\frac{\cFs_\sigma}{\sigma}-\loglap \bigr) \cQs_{s} f\bigr] (x)\Bigr|\le \Bigl(\bigl|\frac{\kappa_{N,\sigma}}{\sigma}|\cdot|^{2\sigma-N} -c_N|\cdot|^{-N}\bigr| *|\cQs_{s}f|\Bigr)(x)\\
&\le \bigl|\frac{\kappa_{N,\sigma}}{\sigma}-c_N\bigr| \bigl(|\cdot|^{2\sigma-N}*|\cQs_{s}f|\bigr)(x)+
c_N \bigl(\bigl|\, |\cdot|^{2\sigma-N}-|\cdot|^{-N}\bigr| *|\cQs_{s}f|\bigr)(x)\\
&\le \sigma\, C \bigl( |\cdot|^{2\sigma-N}*|\cQs_{s}f|\bigr)(x)+
\frac{\sigma\, c_N }{2\eps}\bigl[\bigl(\, |\cdot|^{2\sigma+\eps-N}+|\cdot|^{-N-\eps}\bigr) *|\cQs_{s}f|\bigr](x),
\end{align*}
where, by (\ref{Q:est-1}) and Corollary~\ref{est:l-new}, applied with $a=s$, $b=2s$, $\lambda = 0$ and $\nu = 2\sigma-N$,  
\begin{align*}
&\bigl( |\cdot|^{2\sigma-N}*|\cQs_{s}f|\bigr)(x)\le C \|f\|_{L^{\infty}(\Omega)}\int_{\R^N \setminus \Omega}  
\frac{|x-y|^{2\sigma-N}}{\deltaO(y)^{s}+\deltaO(y)^{N+2s}}dy\\
&\le C m(s,0,\deltaO(x)) \le \frac{C}{1-s}\Bigl(1+\frac{\deltaO(x)^{-s}}{s}\Bigr) \le C\deltaO(x)^{-s}\le C \deltaO(x)^{-s-\eps}
\end{align*}
and, similarly, 
\begin{align*}
\bigl( &\bigl(\, |\cdot|^{2\sigma+\eps-N}+|\cdot|^{-N-\eps}\bigr) *|\cQs_{s}f|\bigr)(x) \le
C \|f\|_{L^{\infty}(\Omega)}\int_{\R^N \setminus \Omega}  
\frac{|x-y|^{2\sigma+\eps-N}+|x-y|^{-N-\eps}}{\deltaO(y)^{s}+\deltaO(y)^{N+2s}}dy\\   
&\le C \bigl(m(s,0,\deltaO(x))+m(s,-\eps,\deltaO(x))\bigr) \le C \bigl(\deltaO(x)^{-s}+\deltaO(x)^{-s-\eps}\bigr)
\le C \deltaO(x)^{-s-\eps}.
\end{align*}
We thus conclude that 
$$
\bigl |\bigl[ \bigl(\frac{\cFs_\sigma}{\sigma}-\loglap \bigr) \cQs_{s} f\bigr](x)\bigr|\le \sigma\, C \deltaO^{-s-\eps},
$$
and this yields the claim.
\end{proof}

\begin{remark}
\label{log-bound-remark-2}
Let $f\in L^\infty(\Omega)$. Similarly as in the proof of Lemma~\ref{gsigma:l}, we may prove that
  \begin{equation}
    \label{eq:log-bound-2}
\big|[\loglap \cQs_s f](x)\big| \leq C \|f\|_{L^\infty(\Omega)} \deltaO(x)^{-s}
\qquad \text{for $s \in (0,1)$ and $x\in \Omega$ }
\end{equation}
with some constant $C=C(N,\Omega,s)>0$. Indeed, for $x \in \Omega$ we have, by (\ref{Q:est-1}) and Corollary~\ref{est:l-new},
\begin{align*}
\big|[\loglap \cQs_s f](x)\big| \le c_N [ |\cdot|^{-N} * |\cQs_s f|](x) &\le C \|f\|_{L^\infty(\Omega)} \int_{\R^N \setminus \Omega}  
\frac{|x-y|^{-N}}{\deltaO(y)^{s}+\deltaO(y)^{N+2s}}dy\\
&\le C m(s,0,\deltaO(x)) \le C\deltaO(x)^{-s}
\end{align*}
for some constants $C=C(N,\Omega,s)>0$. Moreover, since $\cQs_sf=0$ in $\Omega$ by definition, it follows that $\loglap\cQs_s f$ is smooth in $\Omega$.
\end{remark}

\begin{cor}\label{FQ:l} Let $f \in L^\infty(\Omega)$, $s \in (0,1)$ and $\eps \in (0,\min\{s,1-s\})$. Then
$$
\lim_{\sigma \to 0^+}\|\deltaO^{s+\eps} \bigl(\frac{\cFs_\sigma}{\sigma}\cQs_{s+\sigma} f -\loglap \cQs_{s}f\bigr) \|_{L^\infty(\Omega)} =0.
$$
\end{cor}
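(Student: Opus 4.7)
The plan is to reduce the corollary directly to the two preceding results, Lemma~\ref{gsigma:l} and Lemma~\ref{FQ:l-prelim}, via a simple add/subtract decomposition.

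First, I would split the quantity inside the norm by adding and subtracting $\frac{\cFs_\sigma}{\sigma}\cQs_s f$, writing
\[
\frac{\cFs_\sigma}{\sigma}\cQs_{s+\sigma}f - \loglap \cQs_s f \;=\; \frac{1}{\sigma}\,\cFs_\sigma(\cQs_{s+\sigma}-\cQs_s)f \;+\; \Bigl(\frac{\cFs_\sigma}{\sigma}-\loglap\Bigr)\cQs_s f.
\]
By the triangle inequality applied to the weighted $L^\infty(\Omega)$-norm, it suffices to show that each of the two terms tends to $0$ in the norm $\|\deltaO^{s+\eps}\,\cdot\,\|_{L^\infty(\Omega)}$ as $\sigma\to 0^+$.

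For the first term, this is exactly the content of (\ref{eq:Q-difference-2}) in Lemma~\ref{gsigma:l}, once one notes that for the given $s$ and $\eps$ the admissibility condition $\eps\in(0,\min\{s,1-s\})$ is satisfied, so that the lemma applies with the same parameters. For the second term, the assertion is precisely Lemma~\ref{FQ:l-prelim}, which is already proved for the same range of $\eps$. Combining the two limits yields the claim.

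There is essentially no obstacle here: the corollary is a bookkeeping consequence of the two lemmas, and the only reason to state it separately is convenience in later sections (Section~\ref{sec:main-part}) where the difference quotient $\frac{1}{\sigma}(\cGs_{s+\sigma}-\cGs_s)f$ is analyzed via the decomposition~\eqref{Q:rep}. The only point worth emphasizing in the write-up is that the one-sided limit in Lemma~\ref{gsigma:l}(ii) is what forces us to take $\sigma\to 0^+$ here (whereas the two-sided version of (\ref{eq:Q-difference-1}) is not strong enough to control the $\frac{1}{\sigma}$ prefactor).
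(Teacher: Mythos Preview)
Your proof is correct and matches the paper's approach exactly: the paper's proof consists of the single sentence ``This follows by combining (\ref{eq:Q-difference-2}) with Lemma~\ref{FQ:l-prelim},'' which is precisely your add/subtract decomposition followed by an application of each result.
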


\begin{proof}
This follows by combining (\ref{eq:Q-difference-2}) with Lemma~\ref{FQ:l-prelim}.   
\end{proof}

\begin{proof}[Proof of Theorem \ref{thm1:differential-section}]
Let $f \in C^\alpha(\overline \Omega)$ for some $\alpha>0$ and $s\in(0,1)$. We first show that 
\begin{equation}
  \label{eq:first-claim-differentiability-s}
\lim_{\sigma \to 0^+}\Bigl\| \frac{(\cGs_{s+\sigma}-\cGs_{s})f}{s} - \cDs_s f\Bigr\|_{L^\infty(\Omega)}= 0 \quad \text{with $\cDs_s f = \cGs_s \cRO \Bigl(\loglap \cQs_s f- \loglap \cEO f \Bigr)$.} 
\end{equation}
Let $\sigma\in(0,1-s)$. Using Lemma~\ref{semigroup} and \eqref{eq:G-op-definition}, we have 
\begin{align*}
\cGs_{s+\sigma}&f-\cGs_{s} f = \bigr(\cFs_{s+\sigma}-\cFs_{s}\bigr)\cEO f -\bigl(\cHs_{s+\sigma}-\cHs_s\bigr) f\\
=& \cFs_{s} \bigr(\cFs_{\sigma}-\id \bigr)\cEO f -
\cHs_s \bigr(\cRO(\cFs_{\sigma} \cEO f) -f \bigr)
+ \cHs_s \cRO(\cFs_{\sigma} \cEO f) - \cHs_{s+\sigma} f\\
=& \cFs_{s} \bigl(\bigl[\bigr(\cFs_{\sigma}-\id \bigr) \cEO f\bigr]1_\Omega\,\bigr) + \cFs_{s} \bigl(\bigl[ \cFs_{\sigma} \cEO f\bigr]1_{\R^N \setminus \Omega}\,\bigr)\\
&-\cHs_s \cRO(\bigr[\cFs_{\sigma}-\id \bigr] \cEO f\bigr)
+  \cHs_s \cRO(\cFs_{\sigma}\cEO f\bigr) - \cHs_{s+\sigma} f\\
=& \cGs_s \cRO \bigl(\bigl[\cFs_{\sigma}-\id \bigr]\cEO f\bigr)+\cFs_{s}\bigl( \bigl[\cFs_{\sigma} \cEO f\bigr]1_{\R^N \setminus \Omega}\,\bigr) +\cHs_s \cRO(\cFs_{\sigma}\cEO f\bigr) - \cHs_{s+\sigma} f,
\end{align*}
where, by Lemma~\ref{semigroup}, (\ref{eq:G-op-definition}) and (\ref{eq:Hs-identity}),  
\begin{align*}
\cHs_{s+\sigma} f &= \cFs_{s+\sigma} \cQs_{s+\sigma} f= 
\cFs_s \Bigl([\cFs_\sigma \cQs_{s+\sigma} f]1_\Omega+[\cFs_\sigma \cQs_{s+\sigma} f]1_{\R^N \setminus \Omega}\Bigr)\\
&= \cGs_s \cRO(\cFs_\sigma \cQs_{s+\sigma} f)+
 \cHs_s \cRO(\cFs_\sigma \cQs_{s+\sigma} f)+ \cFs_s \Bigl([\cFs_\sigma \cQs_{s+\sigma} f]1_{\R^N \setminus \Omega}\Bigr).
\end{align*}
Consequently, 
\begin{equation}
  \label{eq:Gs-difference-formula-0}
\cGs_{s+\sigma}f-\cGs_{s} f = \cGs_s \cRO \bigl(\bigl[\cFs_{\sigma}-\id \bigr]\cEO f -\cFs_{\sigma}\cQs_{s+\sigma} f\bigr)+ w_{\sigma}
\end{equation}
with 
$$
w_{\sigma}:=  \cFs_{s}\bigl( \bigl[\cFs_{\sigma} \cEO f\bigr]1_{\R^N \setminus \Omega}\,\bigr) +\cHs_s \cRO(\cFs_{\sigma}\cEO f\bigr)  - \cHs_s \cRO(\cFs_\sigma \cQs_{s+\sigma} f) -
\cFs_s \Bigl(\bigl(\cFs_\sigma \cQs_{s+\sigma} f\bigr)1_{\R^N \setminus \Omega}\Bigr)
$$
By Lemma~\ref{F-H-properties}, the function $w_{\sigma}$ satisfies $(-\Delta)^s w_{\sigma}= 0$ in $\Omega$, whereas, by Lemma~\ref{semigroup}, \eqref{eq:Hs-identity}, and the definition of $\cHs_s$,
\begin{align*}
w_{\sigma}&= \cFs_{s}\bigl( \bigl[\cFs_{\sigma} \cEO f\bigr]1_{\R^N \setminus \Omega}\,\bigr) +\cFs_s [(\cFs_{\sigma}\cEO f\bigr)1_\Omega] - \cFs_s [(\cFs_\sigma \cQs_{s+\sigma} f)1_\Omega] -
\cFs_s \Bigl(\bigl(\cFs_\sigma \cQs_{s+\sigma} f\bigr)1_{\R^N \setminus \Omega}\Bigr)
\\
&= \cFs_{s} \cFs_{\sigma} \cEO f  -
\cFs_s \cFs_\sigma \cQs_{s+\sigma} f= \cFs_{s+\sigma} \cEO f  -
\cHs_{s+\sigma} f = 0 \qquad \qquad \text{in $\R^N \setminus \Omega$.}
\end{align*}
We also claim that $w_{\sigma} \in L^\infty(\R^N)$. Indeed, by (\ref{eq:H-F-est}) we have 
$$
|w_{\sigma}| \le \cFs_s * g_{s,\sigma}\qquad \text{with}\quad g_{s,\sigma} = \bigl|\cFs_{\sigma} \cEO f\bigr| 
+ \bigl| \cFs_\sigma \cQs_{s+\sigma} f \bigr|,
$$
and 
$$
g_{s,\sigma} \le \cFs_{\sigma} \cEO |f| + \cFs_\sigma \cQs_{s+\sigma} |f|
$$
by Remark~\ref{Q-positivity-preserving}. Consequently, using (\ref{eq:H-F-est}) again gives
\begin{align*}
|w_{\sigma}| &\le \cFs_s *\Bigl(\cFs_{\sigma} \cEO |f| + \cFs_\sigma \cQs_{s+\sigma} |f|\Bigr) = \cFs_{s+\sigma} \cEO |f|+ \cFs_{s+\sigma} \cQs_{s+\sigma} |f|\\
&=  
\cFs_{s+\sigma} \cEO |f|+ \cHs_{s+\sigma} |f| \le 2 \cFs_{s+\sigma} \cEO |f|\le 
2\|f\|_{L^\infty(\Omega)} \cFs_{s+\sigma} 1_{\Omega}, 
\end{align*}
and this shows that $w_{\sigma} \in L^\infty(\R^N)$. We now deduce from Lemma~\ref{poisson-representation-new-zero} that $w_{\sigma} \equiv 0$, and therefore, by (\ref{eq:Gs-difference-formula-0}), 
\begin{equation}
  \label{eq:Gs-difference-formula}
\frac{\cGs_{s+\sigma}f-\cGs_{s} f}{\sigma} = \cGs_s \cRO \bigl(v_{1,\sigma} - v_{2,\sigma}\bigr) \qquad \text{with}\quad v_{1,\sigma}:= \frac{\cFs_{\sigma}-\id}{\sigma} \cEO f,\quad v_{2,\sigma} =  \frac{\cFs_{\sigma}\cQs_{s+\sigma} }{\sigma}f.
\end{equation}
By Lemma~\ref{rn-case}(ii), we have that 
\begin{equation*}
\|\Bigl(v_{1,\sigma} + \loglap \cEO f\Bigr)\deltaO^\eps(\cdot)\|_{L^\infty(\Omega)} \to 0 \qquad \text{as $\sigma \to 0^+$ for every $\eps > 0$.}    
\end{equation*}
Moreover, by Corollary~\ref{FQ:l}, 
\begin{equation}
\label{difference-formular-statement-2}\notag
\|\Bigl(v_{2,\sigma} -\loglap \cQs_{s}f\Bigr)\deltaO^{s+\eps}(\cdot)\|_{L^\infty(\Omega)} \to 0 \qquad \text{as $\sigma \to 0^+$ for every $\eps > 0$.}    
\end{equation}
Consequently, setting $v_\sigma := v_{1,\sigma}- v_{2,\sigma}$ and $v_0:= -\loglap \cEO f + \loglap \cQs_{s}f$, we have that 
\begin{equation}\label{difference-formular-statement-3}
\|(v_\sigma -v_0)\deltaO^{s+\eps}(\cdot)\|_{L^\infty(\Omega)} \to 0 \qquad \text{as $\sigma \to 0^+$ for every $\eps >0$.}    
\end{equation}
Moreover, by Lemma~\ref{k-s-eps-cor} we have, for $\eps \in (0,\min\{s,1-s\})$, 
$$
K_{s,\eps}(\Omega)= \sup_{x \in \Omega} \int_{\Omega}|x-y|^{2s-N}\deltaO(y)^{-s-\eps} dy < \infty
$$
and, by (\ref{eq:G-F-est}),
\begin{align}
\|\cGs_s \cRO (v_\sigma - v_0)\|_{L^\infty(\Omega)} &\leq \kappa_{N,s} \sup_{x \in \Omega} \int_{\Omega}|x-y|^{2s-N} |v_\sigma(y)-v_0(y)|dy\notag\\
&\le \kappa_{N,s} \|(v_\sigma -v_0)\deltaO^{s+\eps}(\cdot)\|_{L^\infty(\Omega)} K_{s,\eps}(\Omega). \notag\label{G:conv}
\end{align}
It thus follows from \eqref{difference-formular-statement-3} that $\|\cGs_s \cRO (v_\sigma - v_0)\|_{L^\infty(\Omega)} \to 0$ as $\sigma \to 0^+$, and this yields (\ref{eq:first-claim-differentiability-s}).\\
Next we show that 
\begin{equation}\label{sec:cont-deriv}
\text{the map $(0,1) \to L^\infty(\Omega)$, $s \mapsto \cDs_s f$ is continuous.} 
\end{equation}
Let $s \in (0,1)$ and $\sigma \in (-s,1-s)$. With $g_s = \cRO \loglap \Bigl( \cQs_s f- \cEO f\Bigr)$, we have
$$
[\cDs_{s+\sigma}-\cDs_{s}](f) = \Bigl(\cGs_{s+\sigma}-\cGs_{s}\Bigr)g_s + \cGs_{s+\sigma} \cRO \loglap \Bigl(\cQs_{s+\sigma} f-\cQs_{s} f\Bigr).
$$
Moreover, we have 
$$
\|\deltaO^s g_s\|_{L^\infty(\Omega)} < \infty,
$$
by (\ref{eq:log-bound-1}) and (\ref{eq:log-bound-2}). For given $\eps \in (0,\min\{s,1-s\})$, we may thus write $g_s = g_{1,s}+g_{2,s}$ with $g_{1,s} \in L^\infty(\Omega)$ and $\|\deltaO^{s+\eps} g_{2,s}\|_{L^\infty(\Omega)} <\eps$.
Consequently, if $|\sigma|< \min \{s,1-s\}-\eps$, we have 
\begin{align*}
\Bigl|& \Bigl[\Bigl(\cGs_{s+\sigma}-\cGs_{s}\Bigr)g_{2,s}\Bigr](x)\Bigr| \le \Bigl[\Bigl(\cFs_{s+\sigma}+\cFs_{s}\Bigr)|g_{2,s}|\Bigr](x)\\
& \le \|\deltaO^{s+\eps} g_{2,s}\|_{L^\infty(\Omega)}  \Bigl[\Bigl(\cFs_{s+\sigma}+\cFs_{s}\Bigr)\deltaO^{-s-\eps}\Bigr](x)\\
&\le \eps \Bigl(K_{s+\sigma,\eps}+K_{s,\eps}\Bigr) \le \eps C \bigl[( \min \{s+\sigma,1-s-\sigma\}-\eps)^{-3} +(\min \{s,1-s\}-\eps)^{-3}\bigr],
\end{align*}
by Lemma~\ref{k-s-eps-cor} and therefore 
$$
\limsup_{\sigma \to 0}\Bigl\|\Bigl(\cGs_{s+\sigma}-\cGs_{s}\Bigr)g_{2,s}\Bigr\|_{L^\infty(\Omega)} \le  \eps C (\min \{s,1-s\}-\eps)^{-3}.
$$
Moreover,
$$
\lim_{\sigma \to 0}\Bigl\|\Bigl(\cGs_{s+\sigma}-\cGs_{s}\Bigr)g_{1,s}\Bigr\|_{L^\infty(\Omega)}= 0,
$$
by Lemma~\ref{lemma:continuity-smooth}. Since $\eps \in (0,\min\{s,1-s\})$ was chosen arbitrarily, this shows that 
$$
\lim_{\sigma \to 0}\Bigl\|\Bigl(\cGs_{s+\sigma}-\cGs_{s}\Bigr)g_s\Bigr\|_{L^\infty(\Omega)}= 0.
$$
It remains to show that 
$$
\cGs_{s+\sigma}\cRO \loglap \Bigl(\cQs_{s+\sigma}f-\cQs_{s} f\Bigr) \to 0 \qquad \text{as $\sigma \to 0$.}
$$
For this we recall that (\ref{eq:G-F-est}) gives rise to the pointwise inequality 
$$
\Bigl| \cGs_{s+\sigma} \cRO \loglap \Bigl(\cQs_{s+\sigma}f-\cQs_{s} f\Bigr)\Bigr| \le  \cFs_{s+\sigma}  \Bigl|\cRO \loglap \Bigl(\cQs_{s+\sigma}-\cQs_{s} \Bigr)f \Bigr|,
$$
where, for $x \in \Omega$, by~(\ref{eq:Q-difference-1}), \eqref{eq:log-laplace2}, and the fact that $(\cQs_{s+\sigma}-\cQs_{s})f=0$ in $\Omega$,
\begin{align*}
\Bigl|&\loglap \Bigl(\cQs_{s+\sigma}-\cQs_{s} \Bigr)f\Bigr|(x)\\
&\le c_N \bigl\|(\deltaO(\cdot)^{s+\eps}+ \deltaO(\cdot)^{N+2s-\eps})(\cQs_{s+\sigma}-\cQs_s)f \bigr\|_{L^\infty(\R^N \setminus \Omega)}  \int_{\R^N \setminus \Omega}  \frac{|x-y|^{-N}}{(\deltaO(y)^{s+\eps}+ \deltaO(y)^{N+2s-\eps})}dy\\
&=o(1)\int_{\R^N \setminus \Omega}  \frac{|x-y|^{-N}}{(\deltaO(y)^{s+\eps}+ \deltaO(y)^{N+2s-\eps})}dy 
\end{align*}
and, by Corollary~\ref{est:l-new},    
$$
\int_{\R^N \setminus \Omega} \frac{|x-y|^{-N}}{(\deltaO(y)^{s+\eps}+ \deltaO(y)^{N+2s-\eps})}dy \le C m(s+\eps,0,\deltaO(x))
\le C \deltaO(x)^{-s-\eps}.
$$
Consequently, for $x \in \Omega$ we have, by Lemma~\ref{k-s-eps-cor}, 
\begin{align*}
\Bigl| \cGs_{s+\sigma} \cRO \loglap \Bigl(\cQs_{s+\sigma}-\cQs_{s} f\Bigr|(x) &\le o(1) \int_{\Omega}\deltaO^{-s-\eps}(y) |x-y|^{2(s+\sigma)-N}dy \\
&\le o(1) \int_{\Omega} \deltaO(y)^{-s-\eps} |x-y|^{2s-N}
dy \le o(1) K_{s,\eps} = o(1).
\end{align*}
We have thus proved (\ref{sec:cont-deriv}). The assertion of Theorem~\ref{thm1:differential-section} now follows by combining (\ref{eq:first-claim-differentiability-s}) and (\ref{sec:cont-deriv}) with Lemma~\ref{abstract-lemma-C-1} below. The proof is thus finished.
\end{proof}

The following Lemma -- which we have used in the proof above -- seems well known, but we could not find this precise formulation in the literature. We thus give a short proof for the convenience of the reader.

\begin{lemma}
\label{abstract-lemma-C-1}
Let $I \subset \R$ be an open interval, $E$ be a Banach space and $\alpha: I \to E$ be a curve with the following properties.
\begin{enumerate}
\item[(i)] $\alpha$ is continuous. 
\item[(ii)] $\partial_s^+ \alpha(s): = \lim\limits_{\sigma \to 0^+}\frac{\alpha(s+\sigma)-\alpha(s)}{\sigma}$ exists in $E$ for all $s \in I$. 
\item[(iii)] The map $I \to E$, $s \mapsto \partial_s^+ \alpha(s)$ is continuous. 
\end{enumerate}
Then $\alpha$ is continuously differentiable with $\partial_s \alpha= \partial_s^+ \alpha$. 
\end{lemma}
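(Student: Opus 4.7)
The plan is as follows. Fix $s_0 \in I$ and set $\beta(s):=\partial_s^+\alpha(s)$. By hypothesis (iii), $\beta: I \to E$ is continuous, so the Bochner/Riemann integral $\int_{s_0}^s \beta(t)\,dt$ is well-defined for every $s \in I$ and depends continuously on $s$; moreover its right-derivative at any $s \in I$ equals $\beta(s)$ by the standard fundamental theorem of calculus for continuous Banach-space-valued integrands. Define
$$
\gamma: I \to E,\qquad \gamma(s):= \alpha(s) - \alpha(s_0) - \int_{s_0}^s \beta(t)\,dt.
$$
Then $\gamma$ is continuous by (i), and $\partial_s^+ \gamma(s)=0$ for every $s \in I$ by (ii) and the remark just made. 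Hence the theorem reduces to proving that any continuous curve $\gamma: I \to E$ with vanishing right-derivative is constant.

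To prove this reduction, I would use a direct sup-argument rather than invoking Hahn--Banach. Fix $\eps>0$ and $s_1 \in I$ with $s_1 > s_0$, and set
$$
A_\eps:= \bigl\{ s \in [s_0,s_1] \ : \ \|\gamma(s)-\gamma(s_0)\| \le \eps(s-s_0)\bigr\}.
$$
The set $A_\eps$ is closed by continuity of $\gamma$ and contains $s_0$; let $s^*:= \sup A_\eps \in [s_0,s_1]$. If $s^* < s_1$, then since $\partial_s^+ \gamma(s^*)=0$, for all sufficiently small $h>0$ we have $\|\gamma(s^*+h)-\gamma(s^*)\| \le \eps h$. Combined with $s^* \in A_\eps$ this yields
$$
\|\gamma(s^*+h)-\gamma(s_0)\| \le \|\gamma(s^*)-\gamma(s_0)\| + \eps h \le \eps(s^*-s_0)+\eps h = \eps(s^*+h-s_0),
$$
so $s^*+h \in A_\eps$, contradicting the definition of $s^*$. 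Hence $s^*=s_1$ and $\|\gamma(s_1)-\gamma(s_0)\| \le \eps(s_1-s_0)$; letting $\eps \to 0^+$ gives $\gamma(s_1)=\gamma(s_0)$. An entirely analogous argument applied to the set $\{s \in [s_1,s_0] : \|\gamma(s)-\gamma(s_1)\| \le \eps(s-s_1)\}$ handles the case $s_1<s_0$. Thus $\gamma \equiv \gamma(s_0)=0$ on $I$.

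Consequently $\alpha(s) = \alpha(s_0) + \int_{s_0}^s \beta(t)\,dt$ for all $s \in I$, which, by continuity of $\beta$, shows that $\alpha$ is of class $C^1$ with $\alpha'(s)=\beta(s)=\partial_s^+\alpha(s)$, as required. The main (and really the only) delicate step is the sup-argument above; everything else is bookkeeping around the fundamental theorem of calculus for continuous integrands with values in a Banach space.
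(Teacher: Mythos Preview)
Your proof is correct and follows essentially the same approach as the paper: reduce to showing that a continuous curve with vanishing right-derivative is constant by subtracting the Bochner integral of $\partial_s^+\alpha$, and then prove this via an $\eps$-slack sup-argument. The only cosmetic differences are that the paper's set $M_\eps$ additionally requires the estimate for all intermediate points (so it is automatically an interval), and the paper handles points to the left of the base point by varying the base point rather than by repeating the sup-argument; neither difference is substantive.
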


\begin{proof}[Proof]
Let $I=(a,b)$ for $a,b\in \R\cup\{\pm\infty\}$, $a<b$. We first consider the case where $\partial_s^+ \alpha \equiv 0$ on $I$ and $\alpha(t)=0$ for some $t \in I$. We then claim that 
\begin{equation}
\label{claim-standard-lemma}
\alpha \equiv 0\qquad \text{on $[t,b)$.}  
\end{equation}
To see this, we fix $\eps>0$, and we consider the set 
$$
M_\eps:= \{s \in [t,b)\::\: |\alpha(\tau)| \le \eps (\tau-t) \: \text{for $\tau \in [t,s]$}\}.
$$
Since $\alpha$ is continuous, $M_\eps$ is a relatively closed interval in $[t,b)$.  
We claim that $M_\eps= [t,b)$. Indeed, suppose by contradiction that $m:= \max M_\eps < b$. Since $\partial_s^+ \alpha(m)=0$, there exists $\delta_0 \in (0,b-m)$ with 
$$
|\alpha(m+\delta)-\alpha(m)|\le \eps \delta \qquad \text{for $\delta \in (0,\delta_0)$.}
$$
Consequently, for $\delta \in (0,\delta_0)$ we have 
$$
|\alpha(m+\delta)|\le |\alpha(m)|+ |\alpha(m+\delta)-\alpha(m)| \le \eps (m-t)+ \eps \delta =\eps(m+\delta-t)
$$
and therefore $m+\delta \in M_\eps$. This contradicts the definition of $m$, and therefore $M_\eps = [t,b)$. Since $\eps>0$ was arbitrary, (\ref{claim-standard-lemma}) is proved.

In the general case, we now fix $t \in I$ and consider the functions 
$$
\beta, \tilde \alpha: I \to E, \qquad \beta(s)=\alpha(t)+\int_t^s\partial_\tau^+\alpha(\tau)\ d\tau,\qquad \tilde  \alpha(s)= \alpha(s)-\beta(s).
$$
Since the map $s \mapsto \partial_s^+ \alpha(s)$ is continuous by assumption, the map $\beta$ is of class $C^1$ with $\partial_s \beta = \partial_s^+ \alpha$ on $I$. Consequently, $\partial_s^+ \tilde \alpha \equiv 0$ on $I$ and $\tilde \alpha(t)=0$, which by the argument above implies that 
$\tilde \alpha \equiv 0$ on $[t,b)$. Hence $\alpha$ coincides with $\beta$ and is therefore of class $C^1$ on $(t,b)$. Since $t \in I$ was chosen arbitrarily, the claim follows.
\end{proof}

\section{Nonsmooth domains}
\label{sec:appr-nonsm-doma}
The main result of this section is the following. 

\begin{prop}
\label{sec:appr-nonsm-doma-1}
Let $\Omega \subset \R^N$ be an arbitrary open bounded set. Then there exists a sequence of open bounded sets $\Omega_n \subset \Omega$, $n \in \N$ with smooth boundary and the following properties: 
\begin{enumerate}
\item[(i)] $\Omega_{n} \subset \Omega_{n+1}$ for $n \in \N$.
\item[(ii)] $\Omega= \bigcup \limits_{n \in \N} \Omega_n$. 
\end{enumerate}
Moreover, for any such sequence,
\begin{enumerate}
\item[(I)] $\lim \limits_{n \to \infty} h_{\Omega_n}(x)= h_\Omega(x)\:$ for $x \in \Omega$.
\item[(II)] If $f \in C^\alpha(\overline \Omega)$ for some $\alpha>0$, then  
\begin{equation}
  \label{eq:loglap-conv}
\loglap \cEO f(x) = \lim_{n \to \infty} \loglap \cEOn f \qquad \text{for $x \in \Omega$.}
\end{equation}
\item[(III)] If $f \in L^\infty(\Omega)$ and $s \in (0,1]$, then 
\begin{equation}
  \label{eq:green-conv}
[\cGs_s f](x)= \lim_{n \to \infty}[\cGs_s^n f](x)\quad \text{for $x \in \Omega$.}
\end{equation}
\end{enumerate}
Here $\cGs_s$ denotes the Green operator associated with the domain $\Omega$, and $\cGs_s^n$ denotes the Green operator associated with the domain $\Omega_n$ for $n \in \N$. Moreover, we identify $f$ with its restriction to $\Omega_n$.
\end{prop}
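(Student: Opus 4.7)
The plan is to construct the exhausting sequence by a standard smoothing procedure, deduce (I) and (II) directly from Lemma~\ref{domain-inclusion}, and establish (III) via a weak-solution argument combined with interior regularity.

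For the construction of $\Omega_n$, I will use a standard exhaustion by regular superlevel sets of a mollified distance function: setting $\phi_n := \deltaO * \eta_{1/n^2}$, where $\deltaO$ is viewed as a Lipschitz function on $\R^N$ and $\eta_\eps$ is a standard mollifier, Sard's theorem provides a regular value $c_n \in (1/n, 3/n)$ of $\phi_n$, so that $\Omega_n := \{\phi_n > c_n\}$ has smooth boundary and lies in $\Omega$. After passing to a subsequence, these sets form an increasing exhaustion with $\bigcup_n \Omega_n = \Omega$, giving (i) and (ii).

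For (I) and (II), I fix $x \in \Omega$ and $n_0$ with $x \in \Omega_{n_0}$. Applying Lemma~\ref{domain-inclusion} with $\Omega' = \Omega_n$ ($n \ge n_0$) to $f \equiv 1$ and to the given $f \in C^\alpha(\overline \Omega)$ respectively yields
\[
[\loglap 1_{\Omega_n} - \loglap 1_\Omega](x) = c_N \int_{\Omega \setminus \Omega_n}\frac{dy}{|x-y|^N},\qquad [\loglap \cEOn f - \loglap \cEO f](x) = c_N \int_{\Omega \setminus \Omega_n}\frac{f(y)\,dy}{|x-y|^N}.
\]
For $n \ge n_0$ the integrands are bounded on $\Omega \setminus \Omega_n \subset \R^N \setminus \Omega_{n_0}$ by a constant depending only on $x$, $f$, and $n_0$, while $|\Omega \setminus \Omega_n| \to 0$ by continuity of measure. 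Dominated convergence then forces both integrals to zero; recalling $\loglap 1_A = h_A + \rho_N$, this yields (I) and (II).

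For (III), I fix $s \in (0,1]$ and $f \in L^\infty(\Omega)$, and set $u_n := \cGs_s^n f$ (extended by zero to $\R^N$) and $u := \cGs_s f$. Since $u_n \in \cH^s_0(\Omega_n) \subset \cH^s_0(\Omega)$, the energy identity $\cE_s(u_n, u_n) = \int_{\Omega_n} f u_n$ combined with the Poincaré inequality in $\cH^s_0(\Omega)$ gives a uniform bound on $\|u_n\|_{\cH^s_0(\Omega)}$. Along a subsequence $u_{n_k} \weakto v$ weakly in $\cH^s_0(\Omega)$; for any $\phi \in C^\infty_c(\Omega)$ one has $\supp \phi \subset \Omega_{n_k}$ for $k$ large by compactness, so passing to the limit in $\cE_s(u_{n_k}, \phi) = \int_\Omega f \phi$ gives $\cE_s(v,\phi) = \int_\Omega f\phi$, and uniqueness of weak solutions (Remark~\ref{green-weak-solution}) forces $v = u$. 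Hence the whole sequence converges weakly to $u$. To upgrade this to the pointwise convergence \eqref{eq:green-conv}, I will combine the uniform bound $|u_n| \le \cGs_s|f| \le \|\cGs_s\|\,\|f\|_{L^\infty(\Omega)}$, which follows from the domain monotonicity of the Green kernel (cf.\ Remark~\ref{Q-positivity-preserving} and Corollary~\ref{cor:application-fractional}), with interior regularity: on any compact $K \subset \Omega$, one has $K \subset \Omega_n$ for $n$ large with $\dist(K,\partial\Omega_n)$ uniformly bounded below, and the interior Hölder estimates from Lemma~\ref{F-H-properties}(i) yield uniform $C^{s-\eps}(K)$ bounds on $(u_n)$. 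Arzelà--Ascoli combined with the uniqueness of the weak limit then gives $u_n \to u$ uniformly on $K$, hence the desired pointwise convergence.

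The main obstacle is precisely this upgrade from weak $\cH^s_0(\Omega)$ convergence to pointwise convergence in (III): one needs both the uniform $L^\infty$-bound coming from domain monotonicity of the Green kernel and uniform local interior regularity to close the argument. The case $s=1$ is completely analogous, replacing $\cH^s_0(\Omega)$ and $\cE_s$ by $H^1_0(\Omega)$ and the classical Dirichlet form, and invoking classical elliptic interior regularity in place of Lemma~\ref{F-H-properties}(i).
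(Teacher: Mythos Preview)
Your argument mirrors the paper's almost exactly: the same smoothing-plus-Sard construction (the paper mollifies $1_{\Omega^\eps}$ rather than $\deltaO$, an inessential variant), the same dominated-convergence proof of (I)--(II) (you route it through Lemma~\ref{domain-inclusion}, while the paper works directly with the defining integrals), and the same weak-compactness-plus-interior-regularity scheme for (III), arguing by contradiction via Arzel\`a--Ascoli and uniqueness of the weak limit. One correction: Lemma~\ref{F-H-properties}(i) requires $g\in C^\alpha_{loc}$ and does not yield the uniform interior H\"older bound you need from $f\in L^\infty$ alone; the paper invokes Lemma~\ref{reg:l2} instead, which bounds $[u]_{C^s(B_{r/2})}$ in terms of $\|g\|_{L^\infty(B_r)}$ and a weighted tail of $u$, and this (together with the uniform $L^\infty$ bound from domain monotonicity) is precisely what closes the compactness step.
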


\begin{proof}
For $\eps>0$, we consider the open subsets $\Omega^\eps:= \{x \in \Omega\::\: \dist(x,\partial \Omega)> 2\eps\}$ and the characteristic functions
$$
\chi_\eps:= 1_{\Omega^\eps}
$$
We then fix a nonnegative radial function $\rho_* \in C^\infty_c(\R^N)$ with $\supp \,\rho_* \subset B_1(0)$ and $\int_{\R^N} \rho_* \,dx = 1$, and we consider the mollifying kernels 
$$
\rho_\eps \in C^\infty_c(\R^N), \qquad \rho_\eps(x)= \eps^{-N} \rho(\frac{x}{\eps}).
$$
for $\eps>0$. Moreover, we consider the functions $\eta_\eps:= \rho_\eps * \chi_\eps \in C^\infty_c(\R^N)$, which have the properties 
$$
0 \le \eta_\eps \le 1, \quad \supp \eta_\eps \subset \Omega^{\frac{\eps}{2}} \quad \text{and}\quad 
\eta_\eps \equiv 1 \quad \text{on $\Omega^\eps$.}
$$ 
By Sard's theorem, $\eta_\eps$ has a regular value $t_\eps \in (0,1)$. Consequently, the set 
$$
U^\eps:= \{x \in \R^N\::\: \eta_\eps> t_\eps\}
$$
is smoothly bounded with $\Omega^\eps \subset U^\eps \subset \Omega^{\frac{\eps}{2}}$. Choosing in particular $\eps =\eps_n:= 2^{-n}$ for $n \in \N$ and setting $\Omega_n:= U^{\eps_n}$, we have constructed a sequence of domains $\Omega_n$ satisfying 
$(i)$ and $(ii)$. Next, let $\Omega_n$, $n \in \N$, be arbitrary domains satisfying $(i)$ and $(ii)$. Moreover, let $x \in \Omega$. By monotone convergence, we then have 
$$
\lim_{n \to \infty}\int_{B_1(x)\setminus \Omega_n }\!\!\frac{1}{|x-y|^N}\ dy  
= \int_{B_1(x)\setminus \Omega}\!\!\frac{1}{|x-y|^N}\ dy
\quad \text{and}\quad 
\lim_{n \to \infty}\int_{\Omega_n \setminus B_1(x)}\!\!\frac{1}{|x-y|^{N}}\ dy
= \int_{\Omega\setminus B_1(x)}\!\!\frac{1}{|x-y|^{N}}\ dy.
$$
Consequently, $h_{\Omega_n}(x) \to h_\Omega(x)$ as $n \to \infty$, as claimed in $(I)$.  Moreover, if $f \in C^{\alpha}(\overline \Omega)$ is given for some $\alpha>0$, it follows from Lebesgue's theorem that  
$$
\lim_{n \to \infty}\int_{\Omega_n}\frac{\phi(x)-\phi(y)}{|x-y|^N}\ dy=
\int_{\Omega}\frac{\phi(x)-\phi(y)}{|x-y|^N}\ dy.
$$
Hence we deduce (\ref{eq:loglap-conv}) from $(I)$ and (\ref{eq:log-laplace2}).\\
Next we assume that $f \in L^\infty(\Omega)$. To show (\ref{eq:green-conv}), we may assume that $f$ is nonnegative, since we may split $f$ into its positive and negative part. We then set  
$u_n:= \cGs_s^n f$, $u:= \cGs_s f$. Since $v_n:=u-u_n$ satisfies 
$$
(-\Delta)^s v_n = 0\quad \text{in $\Omega_n$}, \qquad v_n \ge 0 \quad \text{in $\R^N \setminus \Omega_n$}
$$
we have $v_n \ge 0$ in $\R^N$ and therefore $0 \le u_n \le u$. In particular, it follows that   
\begin{equation}
  \label{eq:uniform-l-infty-bound}
\|u_n\|_{L^\infty(\R^N)} \le \|u\|_{L^\infty(\R^N)} < \infty \qquad \text{for all $n \in \N$.}
\end{equation}
Next, for given $x \in \Omega$, we let $n_x \in \N$, $\delta>0$ be chosen such that $B_{2\delta}(x) \subset \Omega_n$ for $n \ge n_x$. By \eqref{eq:uniform-l-infty-bound} and Lemma \ref{reg:l2}, we find that 
\begin{equation}
  \label{eq:uniform-hoelder-approx}
\sup_{n \ge n_x} \|u_n\|_{C^\alpha(B_\delta(x))} < \infty.
\end{equation}
We now argue by contradiction and assume that there exists $\eps>0$ and a subsequence, still denoted by $u_n$, such that 
$$
|u_n(x)-u(x)| \ge \eps \qquad \text{for $n \in \N$.}
$$
By (\ref{eq:uniform-hoelder-approx}), this then also implies that
\begin{equation}
  \label{eq:l-2-approx-contradiction}
\limsup_{n \to \infty} \|u_n-u\|_{L^2(B_\delta(x))}>0.
\end{equation}
Since  $u_n \in L^\infty(\Omega_n) \cap \cH_0^s(\Omega_n) \subset L^\infty(\Omega) \cap \cH_0^s(\Omega)$ and 
$$
\cE_s(u_n,u_n) = \int_{\Omega_n}f u_n dx = \int_{\Omega} f u_n dx \le \|f\|_{L^2(\Omega)} \|u_n\|_{L^2(\Omega)} \le C_{\Omega} \|f\|_{L^2(\Omega)} \sqrt{\cE(u_n,u_n)}, 
$$
by \eqref{pi}, we deduce that the sequence $u_n$ is bounded in $\cH_0^s(\Omega)$. Consequently, we may pass to a subsequence such that 
\begin{equation}
  \label{eq:l-2-convergence-contradiction}
u_n \rightharpoonup u_* \quad \text{in $\cH_0^s(\Omega)$}\qquad \text{and}\qquad
u_n \to u_* \quad \text{in $L^2(\R^N)\quad$ for $n\to \infty$.}
\end{equation}
Moreover, for any $\phi \in C^\infty_c(\Omega)$ we have $\supp \, \phi \subset \Omega_n$ for $n$ sufficiently large and therefore 
$$  
\cE_s(u_*,\phi) = \lim_{n \to \infty}\cE_s(u_n,\phi) 
= \lim_{n \to \infty}\int_{\Omega_n}f \phi\,dx = \int_{\Omega} f \phi\,dx.
$$
Hence $u_* \in \cH_0^s(\Omega)$ is a weak solution of $(-\Delta)^s u_* = f$ in $\Omega$, which, by uniqueness, implies that $u_* = u$. Consequently, $u_n \to u$ in $L^2(\R^N)$ by (\ref{eq:l-2-convergence-contradiction}), which contradicts \eqref{eq:l-2-approx-contradiction}. We thus conclude that $u_n (x) \to u(x)$ as $n \to \infty$, as claimed in (\ref{eq:green-conv}).
\end{proof}

\section{Completion of proofs}
\label{sec:compl-proofs-main}

In this section we complete the proofs of our main results given in the introduction.

\begin{proof}[Proof of Theorem~\ref{thm1:differential}]
By Theorem~\ref{thm1:differential-section}, the map $(0,1) \to L^\infty(\Omega)$, $s \mapsto u_s:= \cGs_s f$ is of class $C^1$ with $v_s:= \partial_s u_s$ given by $v_s = \cGs_s \cRO \bigl(\loglap \cQs_s f- \loglap \cEO f \bigr)$. This yields (\ref{thm1:eq2}), by Lemma \ref{G-properties}, Remark \ref{log-bound-remark-1}, and Remark \ref{log-bound-remark-2}.

Next we assume that $f \ge 0$ in $\Omega$ and $f\not\equiv 0$, which, since $\cGs_s$ is positivity preserving and $G_s$ is positive in $\Omega\times \Omega$, implies that $u_s > 0$ in $\Omega$ and therefore $w_s < 0$ in $\R^N \setminus \Omega$, where $w_s$ is defined in (\ref{eq:def-w-s}). The latter property implies, by \eqref{eq:log-laplace2}, that $-\loglap w_s < 0$ in $\Omega$ and therefore 
$$
v_s <- \cGs_s [\loglap \cEO f] \qquad \text{in $\Omega$ for $s \in (0,1)$.}
$$
If $\loglap \cEO f \ge 0$ in $\Omega$, it follows that $\cGs_s [\loglap \cEO f] \ge 0$ and therefore $v_s < 0$ in $\Omega$ for all $s \in (0,1)$. If, on the other hand, $v_s < 0$ for all $s \in (0,1)$, then $u_{s}(x) < u_{s'}(x)$ for $s, s' \in (0,1)$, $s'<s$, and $x \in \Omega$. Since $u_{s'} \to f$ almost uniformly in $\Omega$ as $s' \to 0^+$ by Proposition~\ref{derivative-zero}, we deduce that $u_s(x) < f(x)$ for all $x \in \Omega$, $s \in (0,1)$ and therefore, again by Proposition~\ref{derivative-zero},    
$$
[\loglap \cEO f](x) = -\lim_{s \to 0^+}\frac{1}{s}[u_s-f](x) \ge 0 \qquad \text{for $x \in \Omega$.}
$$   
\end{proof}

\begin{proof}[Proof of Theorem~\ref{thm1:differential-s-0}]
Setting $u_s:= \cGs_s f$ for $s \in (0,1)$, we have 
$$
  u_s \to f \quad \text{and}\quad \frac{u_s-f}{s} \to -\loglap \cEO f
\qquad \text{almost uniformly in $\Omega$ as $s\to 0^+$}
$$
by Proposition~\ref{derivative-zero}. Moreover, due to Theorem~\ref{thm1:differential} and since $f\geq 0$, we have, for every $s'\in(0,s)$ and $x \in \Omega$, that
\[
u_s(x)=u_{s'}(x)+\int_{s'}^{s}\cGs_\tau\cRO\bigl(\loglap\cQs_\tau f-\loglap \cEO f\bigr)(x)\ d\tau \leq u_{s'}(x)-\int_{s'}^s \bigl(\cGs_\tau [\loglap \cEO f]\bigr)(x)\,d\tau.
\] 
Passing to the limit $s' \to 0^+$, we find that 
$$
u_{s}(x)\le f(x) - \int_{0}^s \bigl(\cGs_\tau [\loglap \cEO f]\bigr)(x)\,d\tau \qquad \text{for $s \in (0,1)$, $x \in \Omega$,}
$$
as claimed in (\ref{eq:integral-derivative-estimate}).      
\end{proof}

\begin{proof}[Proof of Lemma~\ref{domain-inclusion}]
Let $\Omega, \Omega' \subset \R^N$ be open and bounded sets with $\Omega' \subset \Omega$. For $x \in \Omega'$, an easy calculation shows that  
\begin{equation}
\label{h-omega-identity}
h_{\Omega'}(x) - h_{\Omega}(x)= c_N
\int_{\Omega \setminus \Omega'}\frac{1}{|x-y|^{N}}\ dy
\end{equation}
and consequently, by (\ref{eq:log-laplace2}), 
\begin{align*}
[\loglap \cEOprime f](x) - [\loglap \cEO f](x)&=- c_N \int_{\Omega \setminus \Omega'}\frac{f(x)-f(y)}{|x-y|^N}\,dy + [h_{\Omega'}(x) - h_{\Omega}(x)]f(x)\\
&= c_N \int_{\Omega \setminus \Omega'}\frac{f(y)}{|x-y|^N}\,dy\ge 0.
\end{align*}
\end{proof}

\begin{proof}[Proof of Theorem~\ref{pointwise-decreasing}]
We first assume that $\Omega$ is of class $C^2$, and we fix $x \in \Omega$. Setting $u_s:= \cGs_s f$ for $s \in [0,1]$, where $\cGs_1$ denotes the classical Green operator associated to the Laplacian, we have, by Theorem~\ref{thm1:differential},
$$
u_{s_2}(x)-u_{s_1}(x)= \int_{s_1}^{s_2}v_s(x)\,ds \qquad \text{for $0<s_1<s_2<1$.}
$$
If $\loglap \cEO f \ge 0$ in $\Omega$, it follows from Theorem~\ref{thm1:differential} that $v_s \leq 0$ in $\Omega$ for all $s \in (0,1)$ and consequently
$u_{s_2}(x) \leq u_{s_1}(x)$ for $0<s_1<s_2<1$. Moreover, since $u_s \to f$ almost uniformly in $\Omega$ as $s \to 0^+$ by Proposition~\ref{derivative-zero} and $u_s \to u_1$ as $s \to 1^-$ uniformly (see Section \ref{sec:continuity}), we find that $u_s$ is in fact pointwisely decreasing in $\Omega$ with respect to $s \in [0,1]$. 
If, on the other hand, $u_s$ is pointwisely decreasing with respect to $s \in [0,1]$, it follows that $v_s \leq 0$ in $\Omega$ for all $s \in (0,1)$ and therefore $\loglap \cEO f \ge 0$ by Theorem~\ref{thm1:differential}.

Next, we assume that $\Omega \subset \R^N$ is an arbitrary bounded and open set, and we consider a sequence of open subsets $\Omega_n$, $n \in \N$ as in Proposition~\ref{sec:appr-nonsm-doma-1}. Moreover, we let $\cGs_s^n$ denotes the Green operator associated with the domain $\Omega_n$ for $n \in \N$. Since $\loglap \cEO f \ge 0$ in $\Omega$ by assumption, we also have $\loglap \cEOn f \ge 0$ in $\Omega_n$ by Lemma~\ref{domain-inclusion}. Now, fix $x \in \Omega$ and consider $n_x \in \N$ with $x \in \Omega_n$ for $n \ge n_x$. Then the maps 
$$
[0,1] \to \R, \qquad s \mapsto [\cGs^n_s f](x)
$$
are decreasing for $n \ge n_x$ by our considerations above. Since $[\cGs^n_s f](x) \to [\cGs_s f](x) =u_s(x)$ for $s \in [0,1]$ by Proposition~\ref{sec:appr-nonsm-doma-1}, it follows that $u_s(x)$ is decreasing in $s \in [0,1]$.
\end{proof}
  

\begin{proof}[Proof of Corollary~\ref{cor:application-fractional}]
As explained in the introduction, the result is an easy consequence of (\ref{eq:integral-derivative-estimate}) in the case where $\partial \Omega$ is of class $C^2$. In the case where $\Omega \subset \R^N$ is an arbitrary open and bounded set, we consider again a sequence of open subsets $\Omega_n$, $n \in \N$ as in Proposition~\ref{sec:appr-nonsm-doma-1}. We first show that 
\begin{equation}
  \label{eq:lim-h-omega}
h_0(\Omega) = \lim_{n \to \infty}h_0(\Omega_n).  
\end{equation}
To see this, we first note that, by (\ref{h-omega-identity}), we have 
$$
h_0(\Omega) \le h_0(\Omega_{n+1})\le h_0(\Omega_n) \qquad \text{for $n \in \N$}
$$
and therefore  
$$
h_0(\Omega) \le \lim_{n \to \infty}h_0(\Omega_n).
$$
Moreover, for fixed $x \in \Omega$, there exists $n_x \in \N$ with $x \in \Omega_n$ for $n \ge n_x$, which by Lemma~\ref{domain-inclusion} implies that 
$$
h_\Omega(x) = h_{\Omega_n}(x) -c_N \int_{\Omega \setminus \Omega_n}\frac{1}{|x-y|^N}\,dy \ge  h_0(\Omega_n) -c_N \int_{\Omega \setminus \Omega_n}\frac{1}{|x-y|^N}\,dy.
$$
Consequently, by monotone convergence, 
$$
h_\Omega(x) \ge \lim_{n \to \infty} h_0(\Omega_n)
$$
Since $x \in \Omega$ was chosen arbitrarily, (\ref{eq:lim-h-omega}) follows. 

Next, we let $\cGs_s^n$ denote the Green operator associated with the domain $\Omega_n$ for $n \in \N$. Since $\Omega_n$ has a $C^2$-boundary, we already know that 
$$
\|\cGs_s^n\| \leq e^{-s \bigl(h_0(\Omega_n)+\rho_N\bigr)}\qquad \text{for $n \in \N$.}
$$
Using Proposition~\ref{sec:appr-nonsm-doma-1}(III) and (\ref{eq:lim-h-omega}),
$$
\|\cGs_s\|\le  \limsup_{n \to \infty}\|\cGs_s^n\| \le 
\lim_{n \to \infty} e^{-s \bigl(h_0(\Omega_n)+\rho_N\bigr)} =e^{-s \bigl(h_0(\Omega)+\rho_N\bigr)}, 
$$
as claimed.
\end{proof}


\begin{proof}[Proof of Theorem~\ref{cor:application-fractional-relative-density}]
Let $r>0$. We recall the definition of the {\em relative $r$-density} of $\Omega$ given by 
$$
d_r(\Omega) = \sup_{x \in \Omega}d_r(x,\Omega) \qquad \text{with}\quad d_r(x,\Omega)= \frac{|B_r(x) \cap \Omega|}{|B_r|}.
$$
As remarked in the introduction, it suffices to prove the inequality~(\ref{eq:h-0-lower-bound-rel-density}). Fix $x \in \Omega$. An easy computation then shows that 
\begin{align*}
h_\Omega(x)&= c_N\int_{B_1(x)\setminus \Omega}|x-y|^{-N}\ dy-c_N\int_{\Omega\setminus B_1(x)}|x-y|^{-N}\ dy \\
&= -2 \ln r + c_N\int_{B_r(x)\setminus \Omega}|x-y|^{-N}\ dy-c_N\int_{\Omega\setminus B_r(x)}|x-y|^{-N}\ dy.
\end{align*}
Let $\Omega_x:=\Omega-x$,
\[
b(r):=|\Omega_x\cap B_r|, \qquad  t(r):=\Bigg(\frac{|\Omega|+|B_r|-b(r)}{|B_1|}\Bigg)^{\frac{1}{N}}\qquad\text{and}\qquad s(r):=\Bigg(\frac{b(r)}{|B_1|}\Bigg)^{\frac{1}{N}}.
\]
Note that $|B_{t(r)}\setminus B_r|=|\Omega_x\setminus B_r|$ and $|B_r\setminus B_{s(r)}|=|B_r\setminus \Omega_x|$. From this we deduce, similarly as in the proof of \cite[Lemma 4.11]{CW18}, that 
\begin{align}
&c_N\int_{B_r(x)\setminus \Omega}|x-y|^{-N}\ dy-c_N\int_{\Omega\setminus B_r(x)}|x-y|^{-N}\ dy\\
&= c_N\int_{B_r\setminus \Omega_x}|y|^{-N}\ dy-c_N\int_{\Omega_x\setminus B_r}|y|^{-N}\ dy\notag \geq c_N\int_{B_r\setminus B_{s(r)}}|y|^{-N}\ dy - c_N\int_{B_{t(r)}\setminus B_r}|y|^{-N}\ dy\\
&= 2\int_{s(r)}^r\rho^{-1}\ d\rho - 2\int_{r}^{t(r)}\rho^{-1}\ d\rho
=4 \ln r - 2\ln [s(r)t(r)]\label{lemma-for-cor2-estm1}
\end{align}
Consequently, 
$$
h_\Omega(x) \ge 2 \ln \frac{r}{s(r)t(r)} = \frac{2}{N} \ln \Bigl(\frac{r^N|B_1|^2}{(|\Omega|+|B_r|-b(r))b(r)}\Bigr)= - \frac{2}{N} \ln \Bigl(\bigl(\frac{|\Omega|}{|B_1|}+r^N[1-d_r(x,\Omega)]\bigr)d_r(x,\Omega)\Bigr)
$$
Now, since the function $\tau \mapsto \bigl(\frac{|\Omega|}{|B_1|}+r^N[1-\tau]\bigr)\tau$ is increasing on $[0,\tau_\sOmega]$ with $\tau_\sOmega:= \frac{1}{2}\bigl(\frac{|\Omega|}{|B_1|r^N}+1\bigr)= \frac{1}{2}\bigl(\frac{|\Omega|}{|B_r|}+1\bigr)$ and $d_r(x,\Omega) \le d_r(\Omega) \le \tau_\sOmega$ for every $x \in \Omega$, we conclude that 
$$
h_0(\Omega)= \inf_{x \in \Omega}  h_\Omega(x) \ge - \frac{2}{N} \ln \Bigl(\bigl(\frac{|\Omega|}{|B_1|}+r^N[1-d_r(\Omega)]\bigr)d_r(\Omega)\Bigr),
$$
as claimed in (\ref{eq:h-0-lower-bound-rel-density}). Hence, Theorem~\ref{cor:application-fractional-relative-density} follows from Corollary~\ref{cor:application-fractional}.
\end{proof}

\stopcontents[sections]
\appendix

\section{On uniform constants for the global \texorpdfstring{$C^s$}{Cs}-regularity}
\label{appendix}

In this section, we give proofs of the fact that the constants $C_i(\Omega)$, $i=1,2$ in \eqref{omega-bar-reg} and \eqref{sec:notation:boundary-reg} can be chosen independently of $s$. 

\subsection{Uniform constant for the boundary decay}
\label{boundary-decay-constant}

In this subsection, we consider the boundary decay estimate \eqref{sec:notation:boundary-reg}. So let $\Omega \subset \R^N$ be a bounded open set of class $C^2$, and let $g \in L^\infty(\Omega)$. Moreover, we let $s \in (0,1)$ and 
$u= \cGs_s g \in \cH^s_0(\Omega)$ be the unique weak solution of 
  \begin{equation}
    \label{eq:green-problem-appendix}
\left\{
\begin{aligned}
(-\Delta)^s u &= g &&\qquad \text{in $\Omega$,}\\
u &= 0 &&\qquad \text{in $\R^N \setminus \Omega$.}  
\end{aligned}
\right.
  \end{equation}
As in \cite[Appendix]{RS12}, we shall make use of the Kelvin transform, which, for $r>0$, defines a map
$$
K_s: \cH^s_0(B_1) \to \cH^s_0(\R^N \setminus B_{1}), \qquad 
K_sv(x)=|x|^{2s-N}v(\frac{x}{|x|^2}),
$$
see \cite[Lemma 2.2]{FW12}. Here and in the following,  for any open set 
$\Omega' \subset \R^N$, $N \ge 2$, we let $\cH^s_0(\Omega')$ denote the completion of $C^\infty_c(\Omega')$ with respect to the norm induced by the scalar product 
$$
(w,v) \mapsto \cE_s(w,v) := \int_{\R^N} |\xi|^{2s} \hat w (\xi) \hat v(\xi)\,d\xi.
$$
So far we had only used this space for bounded open sets $\Omega'$. Next we recall that, for $x_0\in \R^N$ and $r>0$, the unique weak solution $\phi \in \cH^s_0(B_1)$ of the problem 
$$
(-\Delta)^s\phi =1  \quad \text{in $B_1$,}\qquad \qquad \phi \equiv 0 \quad \text{in $\R^N \setminus B_1$}
$$
is given by $\phi(x)=\gamma_{N,s}(1-|x|^2)_+^s$ with $\gamma_{N,s}=\frac{\Gamma(\frac{N}{2})4^{-s}}{\Gamma(s+1)\Gamma(\frac{N}{2}+s)}$. A straightforward computation shows that the function 
$$
v:= K_s \phi \in \cH^s_0(\R^N \setminus B_1), \qquad v(x)= \gamma_{N,s} |x|^{-N}(|x|^{-2}-1)_+^s 
$$ 
solves
$$
(-\Delta)^s[K_s\phi ](x)=|x|^{-N-2s}\quad \text{in $\R^N\setminus B_1$.}
$$
By translation invariance and the scaling properties of $(-\Delta)^s$, it thus follows that, for $z \in \R^N$ and $r>0$, the function 
$$ 
v_{r,z} \in \cH^s_0(\R^N\setminus B_r(z)),\qquad 
v_{r,z}(x)=\gamma_{N,s}\frac{(|x-z|^{2}-r^2)_+^s}{|x-z|^{N}}
$$ 
solves 
$$
(-\Delta)^s v_{r,z}=r^{2s}|\,\cdot\,-\,z\,|^{-N-2s} \qquad \text{in $\R^N\setminus B_r(z)$.}
$$
Since $\partial \Omega$ is of class $C^2$, there exists $r \in (0,1)$ such that, at every $\theta\in \partial\Omega$, there is a unique ball $B_r(z_\theta)\subset \R^N\setminus \Omega$ with $\theta\in \partial B_r(z_\theta)$. Setting $R:=\diam(\Omega)+1$, we have $\Omega\subset B_R(z_\theta)$, and therefore
$$
(-\Delta)^s v_{r,z_\theta}=r^{2s}|\,\cdot\, -\,z_\theta\,|^{-N-2s} \geq 
c:= r^{2}R^{-N-2} \qquad\text{in $\Omega$.}
$$
Since also $v_{r,z_\theta} \ge 0$ in $\R^N$, the maximum principle implies that 
$$
|u(x)|\leq \frac{\|g\|_{L^\infty(\Omega)}}{c} v_{r,z_\theta}(x)
= \frac{\|g\|_{L^\infty(\Omega)} \gamma_{N,s}}{c}\frac{(|x-z_\theta|^{2}-r^2)^s}{|x-z_\theta|^{N}} \qquad \text{for $x \in \Omega$.}
$$
Since, moreover, for every $x \in \Omega$ there exists a point $\theta \in \partial \Omega$ with $\deltaO(x)=|x-\theta|$ and $|x-z_\theta|= \deltaO(x)+r \le R$, we conclude that 
$$
|u(x)|\leq \frac{\|g\|_{L^\infty(\Omega)} \gamma_{N,s}}{c} \frac{((\deltaO(x)+r)^{2}-r^2)^s}{
(\deltaO(x)+r)^{N}} \le \frac{\|g\|_{L^\infty(\Omega)} \gamma_{N,s}(2R)^s}{cr^N}\deltaO(x)^s \le 
\frac{2 R \|g\|_{L^\infty(\Omega)} \gamma_{N,s}}{cr^N}\deltaO(x)^s
$$
for $x \in \Omega$. Moreover, for $s\in(0,1)$ we have $\gamma_{N,s}=\frac{\Gamma(\frac{N}{2})4^{-s}}{\Gamma(s+1)\Gamma(\frac{N}{2}+s)} \leq \frac{\Gamma(\frac{N}{2})}{\Gamma_m^2}$, where $\Gamma_m$ is the minimum of the Gamma-function on $(0,\infty)$. Thus \eqref{sec:notation:boundary-reg} follows with an $s$-independent constant $C_2=C_2(\Omega)$.

\subsection{Uniform constant for the \texorpdfstring{$C^s$}{Cs}-regularity}
\label{interior-regularity-constant}

We now consider the uniform regularity estimate \eqref{omega-bar-reg}. For this, we first prove a scale invariant inequality in balls with $s$-independent constants. 

\begin{lemma}\label{reg:l2}
Let $r>0$, $s\in(0,1)$, $g\in L^{\infty}(B_r)$ and $u \in L^1(\R^N, (1+|x|)^{-N-2s}dx) \cap L^\infty(B_{r+\eps})$ for some $\eps>0$ such that $(-\Delta)^su=g$ in $B_{r}$. Then there is $C=C(N)$ with
\begin{equation}\label{eq:lemma:reg:l2}
[u]_{C^s(B_{r/2})}=\sup_{\substack{x,y\in B_{r/2}\\ x\neq y}}\frac{|u(x)-u(y)|}{|x-y|^{s}}\leq r^s C\left(\|g\|_{L^{\infty}(B_r)}+ \int_{\R^N\setminus B_{r}}\frac{\tau_{N,s}|u(z)|}{|z|^{N}(|z|^2-r^2)^s}\ dz\right),
\end{equation}
where $\tau_{N,s}=\frac{2}{\Gamma(s)\Gamma(1-s)|S^{N-1}|}$.
\end{lemma}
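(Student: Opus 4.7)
The plan is to reduce to the unit-ball case by scaling and then split $u$ into a Dirichlet part and an $s$-harmonic part, estimating each separately while carefully tracking the $s$-dependence of all constants.

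First, I would perform a standard rescaling $v(x) := u(rx)$. Then $(-\Delta)^s v(x) = r^{2s} g(rx)$ in $B_1$, and both the left-hand side and the right-hand side of \eqref{eq:lemma:reg:l2} acquire a common factor of $r^s$ (after changing variables $z = rw$ in the integral, whose Jacobian together with the factor $(|z|^2 - r^2)^s$ contributes exactly $r^{2s}$ in the denominator balancing the $r^{2s}$ on $\|g\|_{L^\infty}$). This reduces the problem to proving
$$[u]_{C^s(B_{1/2})} \;\le\; C(N)\Bigl(\|g\|_{L^\infty(B_1)} + \int_{\R^N\setminus B_1}\frac{\tau_{N,s}|u(z)|}{|z|^N(|z|^2-1)^s}\,dz\Bigr).$$

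Next I would write $u = u_1 + u_2$, where $u_1 \in \cH^s_0(B_1)$ solves $(-\Delta)^s u_1 = g$ in $B_1$ with $u_1 \equiv 0$ outside $B_1$, and $u_2 := u - u_1$ is $s$-harmonic in $B_1$ with $u_2 = u$ outside. For $u_1$, I would use the barrier $\gamma_{N,s}(1-|x|^2)^s_+$ together with the Green-function representation on the unit ball, and re-run the Hölder estimate from \cite[Proposition 1.1]{RS12} applied to the torsion problem, observing that the $s$-dependent prefactors that appear, namely $\gamma_{N,s}$ and $\kappa_{N,s}$, are uniformly bounded on $(0,1)$ in terms of $N$ alone, as in Appendix~\ref{boundary-decay-constant}. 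This yields $[u_1]_{C^s(\overline{B_1})} \le C(N)\,\|g\|_{L^\infty(B_1)}$.

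For $u_2$, I would invoke the explicit Poisson representation
$$u_2(x) \;=\; \int_{\R^N\setminus B_1} P_s(x,z)\, u(z)\,dz, \qquad P_s(x,z) \;=\; \tau_{N,s}\,\frac{(1-|x|^2)^s}{|x-z|^N (|z|^2-1)^s},$$
whose prefactor is precisely $\tau_{N,s}$. For $x,y \in B_{1/2}$ and $|z| \ge 1$, the estimate $|x-z|, |y-z| \asymp |z|$ holds with dimensional constants only. Using the elementary inequality $|a^s - b^s| \le |a-b|^s$ valid for $s\in(0,1)$, $a,b\ge 0$, applied to $a = 1-|x|^2$, $b = 1-|y|^2$ gives $|(1-|x|^2)^s - (1-|y|^2)^s| \le |x-y|^s$; and for the factor $|x-z|^{-N}$ one combines a mean-value bound with the trivial inequality $|x-y| \le |x-y|^s$ (since $|x-y| < 1$). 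Together these produce
$$|P_s(x,z) - P_s(y,z)| \;\le\; C(N)\,|x-y|^s \cdot \frac{\tau_{N,s}}{|z|^N(|z|^2-1)^s},$$
with $C(N)$ independent of $s$. Integrating against $|u(z)|$ yields the desired bound on $[u_2]_{C^s(B_{1/2})}$.

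The principal obstacle will be verifying the $s$-independence in the first step, where one must redo the Hölder-regularity estimate of \cite{RS12} for $u_1$ while keeping track of every Gamma-function factor that appears; the technical heart is showing that the constants coming from $\gamma_{N,s}$, $\kappa_{N,s}$, and the explicit ball Green kernel stay uniformly bounded for $s\in(0,1)$. Once this is established, combining the estimates for $u_1$ and $u_2$ and rescaling back yields \eqref{eq:lemma:reg:l2}.
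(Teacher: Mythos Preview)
Your overall architecture --- scaling to $r=1$, splitting $u=u_1+u_2$, and handling the $s$-harmonic piece via the explicit Poisson kernel --- is the same as the paper's. The decisive difference is your choice of $u_1$: you take the \emph{Dirichlet} solution $u_1=\cGs_s g$ on $B_1$, whereas the paper takes the Riesz potential $u_1=\cFs_s(1_{B_1}g)$.

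Your choice makes the $u_2$-step cleaner, since then $u_2=u$ outside $B_1$ and the Poisson integral directly produces the term $\int_{\R^N\setminus B_1}\tau_{N,s}|u(z)|\,|z|^{-N}(|z|^2-1)^{-s}\,dz$. But it creates a genuine circularity in the $u_1$-step. To get $[u_1]_{C^s(B_{1/2})}\le C(N)\|g\|_{L^\infty}$ with $C$ independent of $s$, you propose to ``re-run'' the Ros-Oton--Serra argument with tracked constants. That argument, however, combines a boundary barrier with an \emph{interior} H\"older estimate, and the latter is exactly the content of the lemma you are proving (indeed, in the paper's logical structure Lemma~\ref{reg:l2} is the building block from which the uniform-in-$s$ global $C^s$-bound is derived). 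So the step you identify as the ``principal obstacle'' is not a bookkeeping matter; it presupposes what the lemma is meant to establish. The paper sidesteps this by choosing $u_1=\cFs_s(1_{B_1}g)$, whose $C^s$-seminorm follows from the elementary kernel inequality
\[
|a^{2s-N}-b^{2s-N}|\le \tfrac{N-2s}{N-s}\,|a-b|^s\bigl(a^{s-N}+b^{s-N}\bigr),
\]
with transparently $s$-uniform constants. The price is that this $u_1$ does not vanish outside $B_1$, so the Poisson integral for $u_2$ carries boundary data $v=u-u_1$; the paper then shows by an explicit computation that $\int_{\R^N\setminus B_1}\tau_{N,s}|u_1(z)|\,|z|^{-N}(|z|^2-1)^{-s}\,dz\le C(N)\|g\|_{L^\infty}$, which absorbs back into the $\|g\|_{L^\infty(B_1)}$ term.
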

\begin{proof}
By rescaling the quantities in (\ref{eq:lemma:reg:l2}) and using the scaling properties of $(-\Delta)^s$, it suffices to consider the case $r=1$. We then may write $u=u_1+u_2$, where $u_1:=\cFs_sE_{\text{\tiny $B_1$}}g$ and $u_2$ solves the problem 
$$
(-\Delta)^s u_2 = 0 \quad \text{in $B_1$} \qquad u_2 \equiv v:=u -u_1 \quad \text{on $\R^N \setminus B_1$.}
$$
By Lemma~\ref{poisson-representation-new-zero}, $u_2$ has a Poisson kernel representation given by $u_2 = \int_{\R^N\setminus B_1}v(z) P_s(\cdot,z)\ dz$, where 
	\[
	P_s(x,z)=-(-\Delta)^s_z G_s(x,z)= \tau_{N,s} \frac{(1-|x|^2)^s}{(|z|^2-1)^s}|x-z|^{-N}\;\; \text{for $x\in B_1,\ z\in \R^N\setminus \overline{B_1}$
} 	\]
(see e.g. \cite{B99,B16} and the references therein). Next, let $x,y\in B_{1/2}$. We then have the estimates
\begin{align*}
&\bigl|(1-|x|^2)^s-(1-|y|^2)^s\bigr| \le 4 |x-y|, \qquad |x-z|\geq \frac{|z|}{2} \ge \frac{1}{2},  \qquad |y-z|\geq \frac{|z|}{2} \ge \frac{1}{2} \qquad \text{for $z\in \R^N\setminus B_1$,}\\
&\bigl| |x-z|^{-N}-|y-z|^{-N} \bigr| \le N |x-y| \max\{|x-z|^{-N-1},|y-z|^{-N-1}\} \le N 2^{N+1} \frac{|x-y|}{|z|^{N}} \qquad \text{for $z\in \R^N\setminus B_1$.}  
\end{align*}
Since 
$$
\frac{u_2(x)-u_2(y)}{\tau_{N,s}}= \int_{\R^N \setminus B_1}\frac{|v(z)|}{(|z|^2-1)^s} \Bigl(\frac{(1-|x|^2)^s-(1-|y|^2)^s}{|x-z|^N}
  + (1-|y|^2)^s \bigl(|x-z|^{-N}-|y-z|^{-N}\bigr)\Bigr)\ dz,
$$
we deduce that 
\begin{align}
&|u_2(x)-u_2(y)| \leq |x-y| \tau_{N,s}\bigl(2^{N+2}+N2^{N+1}\bigr) \int_{\R^N\setminus B_1} \frac{|v(z)|}{|z|^N(|z|^2-1)^s} dz \label{lipschitz-appendix}\\
&\leq  \tau_{N,s} |x-y| \bigl(2^{N+2}+N2^{N+1}\bigr) \Bigl[\int_{\R^N\setminus B_1} \frac{|u(z)|}{|z|^N(|z|^2-1)^s} dz + 
\|g\|_{L^\infty(B_1)} \int_{\R^N\setminus B_1} \frac{|[\cFs_sE_{\text{\tiny $B_1$}}1](z)|}{|z|^N(|z|^2-1)^s} dz \Bigr].\nonumber 
\end{align}
Next we note that, for $z \in B_2 \setminus B_1$,
\begin{align*}
\int_{B_1}&|\zeta-z|^{2s-N}d\ \zeta\le \int_{B_{ |z|+1} \setminus B_{|z|-1}}|y|^{2s-N}dy = |S^{N-1}|\int_{|z|-1}^{ |z|+1} t^{2s-1}dt\\
&= \frac{|S^{N-1}|}{2s}\Big((|z|+1)^{2s}-(|z|-1)^{2s}\Big)  \leq \frac{|S^{N-1}|}{2s} \Big| (|z|+1)^2-(|z|-1)^2\Big|^s\leq  \frac{2|S^{N-1}|}{s}|z|^s
\end{align*}
and therefore, using that $|z|^2-1\geq \frac{|z|^2}{2}$ for $|z|\geq 2$,
\begin{align*}
&\int_{\R^N\setminus B_1} \frac{|[\cFs_sE_{\text{\tiny $B_1$}}1](z)|}{|z|^N(|z|^2-1)^s} dz = \kappa_{N,s} \int_{\R^N\setminus B_1}\frac{1}{|z|^N(|z|^2-1)^s}\int_{B_1}|\zeta-z|^{2s-N}d\zeta d z\\
&\le \frac{2 \kappa_{N,s}|S^{N-1}|}{s} \int_{B_2 \setminus B_1}\frac{|z|^{s-N}}{(|z|^2-1)^s}d z + \kappa_{N,s}2^{N-2s}
|B_1| \int_{\R^N \setminus B_2}\frac{|z|^{2s-N}}{|z|^N(|z|^2-1)^s}d z\\
&\le  \kappa_{N,s}|S^{N-1}|^2\Bigl(\frac{2}{s} \int_{B_2 \setminus B_1}\frac{|z|^{s-N}}{(|z|-1)^s}d z + \frac{2^{N-s}}{N}
\int_{\R^N \setminus B_2}|z|^{-2N}d z\Bigr)\\
&\le  \kappa_{N,s}|S^{N-1}|^2 \Bigl(\frac{2}{s}  \int_{1}^2 t^{s-1}(t-1)^{-s}d t + \frac{2^{-s}}{N^2}\Bigr)\leq
\kappa_{N,s}|S^{N-1}|^2 \Bigl(\frac{2}{s(1-s)} + \frac{2^{-s}}{N^2}
\Bigr).
\end{align*}
Inserting this in (\ref{lipschitz-appendix}) gives 
$$
|u_2(x)-u_2(y)| \leq c_1 |x-y| \Bigl(\tau_{N,s}\int_{\R^N\setminus B_1} \frac{|u(z)|}{|z|^N(|z|^2-1)^s} dz + 
\|g\|_{L^\infty(B_1)}\Bigr)\qquad \text{for $x,y \in B_{\frac{1}{2}}$}  
$$
with 
$$
c_1 = c_1(N):= 2^{N+2}+N2^{N+1} + \sup_{s \in (0,1)} \Bigl[ \tau_{N,s} \kappa_{N,s}|S^{N-1}|^2 \Bigl(\frac{2}{s(1-s)} + \frac{2^{-s}}{N^2}
\Bigr)\Bigr]
$$
The finiteness of the supremum follows easily from the explicit asymptotics of $\tau_{N,s}$ and $\kappa_{N,s}$ as $s \to 0^+$, $s \to 1^-$.
To estimate $|u_1(x)-u_1(y)|$ for $x,y \in B_{\frac{1}{2}}$, we note the inequality 
$$
|a^{2s-N}-b^{2s-N}| \le \frac{N-2s}{N-s}|a-b|^s(a^{s-N}+b^{s-N})\qquad \text{for $a,b>0$}
$$
(see e.g. \cite[Eq. (A.3)]{FW16}), which yields
	\begin{align*}
	|u_1(x)&-u_1(y)|\leq \kappa_{N,s}\|g\|_{L^\infty(B_1)}\int_{B_1}\left||x-z|^{2s-N}-|y-z|^{2s-N}\right|\ dz\\
	&\leq \kappa_{N,s}\|g\|_{L^\infty(B_1)}\frac{N-2s}{N-s}|x-y|^s \int_{B_1}\bigl(|x-z|^{s-N}+|y-z|^{s-N}\bigr)\ dz\\
        &\leq  \|g\|_{L^\infty(B_1)}2\kappa_{N,s}\frac{N-2s}{N-s} |x-y|^s \int_{B_2}|z|^{s-N} dz\\
        & = \|g\|_{L^\infty(B_1)} \kappa_{N,s}\frac{2^{1+s}(N-2s)}{s(N-s)}|x-y|^s|S^{N-1}|
\le c_2\|g\|_{L^\infty(B_1)} |x-y|^s
\end{align*}
with $c_2= c_2(N):= \sup \limits_{s \in (0,1)} \kappa_{N,s}\frac{2^{1+s}(N-2s)}{s(N-s)}|S^{N-1}|< \infty$ (the finiteness being again a consequence of the explicit asymptotics of $\kappa_{N,s}$ as $s \to 0^+, s \to 1^-$). The claim now follows with $C:= c_1 + c_2$. 
\end{proof}

Now, to finish the proof of the uniform regularity estimate \eqref{omega-bar-reg}, we let again $\Omega \subset \R^N$ be an open bounded set of class $C^2$. It already follows from (\ref{sec:notation:boundary-reg}) that the function $u=\cGs_sg$ satisfies
$$
\|u\|_{L^\infty(\Omega)} \le C_2 [\diam \Omega]^s \|g\|_{L^\infty(\Omega)} \le C_2 \bigl(1+ \diam \Omega \bigr) \|g\|_{L^\infty(\Omega)}
$$
for $g \in L^\infty(\Omega)$, $s \in (0,1)$ with an $s$-independent constant $C_2= C_2(\Omega)$. Hence, to finish the proof, it suffices to show that
\begin{equation}\label{appendix-claim-global}
 \sup_{\substack{x,y\in \Omega \\ x\neq y}}\frac{|u(x)-u(y)|}{|x-y|^s}\leq C_1\|g\|_{L^\infty(\Omega)}.
\end{equation}
with a constant $C_1=C_1(\Omega)$. For this, let $x,y\in \overline{\Omega}$, $x\neq y$ and note that if $|x-y|<\frac{1}{2}\max\{\deltaO(x),\deltaO(y)\}$, then either $x\in B_{\deltaO(y)/2}(y)$ or $y\in B_{\deltaO(x)/2}(x)$ and thus by Lemma \ref{reg:l2} 
\[
\frac{|u(x)-u(y)|}{|x-y|^s}\leq (1+\diam(\Omega))C\left(\|g\|_{L^{\infty}(\Omega)}+\|u\|_{L^\infty(\Omega)}\right)\leq C'\|g\|_{L^{\infty}(\Omega)},
\]
where $C'=(1+\diam(\Omega))^2(C+C_2)$. If otherwise $|x-y|\geq\frac{1}{2}\max\{\deltaO(x),\deltaO(y)\}$, then by \eqref{sec:notation:boundary-reg}
\[
\frac{|u(x)-u(y)|}{|x-y|^s}\leq  \frac{2^s|u(x)|}{\deltaO(x)^s}+\frac{2^s|u(y)|}{\deltaO(y)^s}\leq 4C_2(1+\diam(\Omega))\|g\|_{L^{\infty}(\Omega)}.
\]
Hence \eqref{appendix-claim-global} holds with $C_1=\max\{C',4C_2(1+\diam(\Omega))\}$. The proof of (\ref{omega-bar-reg}) is thus finished.\\

\textbf{Acknowledgement:} The work of Sven Jarohs and Tobias Weth is supported by DAAD and BMBF (Germany) within the project 57385104. The authors would like to thank Sidy Moctar Djitte for pointing out a correction in the proof of Lemma 2.5. They would also like to thank the referee for his/her remarks.


\begin{thebibliography}{10}

\bibitem{nicola}
N.~Abatangelo.
\newblock Large {$s$}-harmonic functions and boundary blow-up solutions for the
  fractional {L}aplacian.
\newblock {\em Discrete Contin. Dyn. Syst.}, 35(12):5555--5607, 2015.

\bibitem{AJS16b}
N.~Abatangelo, S.~Jarohs, and A.~Salda\~{n}a.
\newblock Green function and {M}artin kernel for higher-order fractional
  {L}aplacians in balls.
\newblock {\em Nonlinear Anal.}, 175:173--190, 2018.

\bibitem{AJS16a}
N.~Abatangelo, S.~Jarohs, and A.~Salda\~{n}a.
\newblock On the loss of maximum principles for higher-order fractional
  {L}aplacians.
\newblock {\em Proc. Amer. Math. Soc.}, 146(11):4823--4835, 2018.

\bibitem{AJS17b}
N.~Abatangelo, S.~Jarohs, and A.~Salda\~{n}a.
\newblock Positive powers of the {L}aplacian: from hypersingular integrals to
  boundary value problems.
\newblock {\em Commun. Pure Appl. Anal.}, 17(3):899--922, 2018.

\bibitem{AS64}
M.~Abramowitz and I.A. Stegun.
\newblock {\em Handbook of mathematical functions with formulas, graphs, and
  mathematical tables}, volume~55 of {\em National Bureau of Standards Applied
  Mathematics Series}.
\newblock U.S. Government Printing Office, Washington, D.C., 1964.

\bibitem{BW20}
S.~Bartels and N.~Weber.
\newblock Parameter learning and fractional differential operators: application in image regularization and decomposition.
\newblock preprint, see arXiv:2001.03394v1, 2020.

\bibitem{BH18}
U.~Biccari and V.~Hern{\'a}ndez-Santamar{\'i}a.
\newblock {The {P}oisson equation from non-local to local}.
\newblock {\em Electron. J. Differential Equations}, Paper No. 145, 2018.

\bibitem{BGR61}
R.~M.~Blumenthal, R.~K.~Getoor, and D.~B.~Ray.
\newblock  On the distribution of first hits for the symmetric stable processes. 
\newblock {\em Trans. Amer. Math. Soc.}, 99:540–554, 1961.

\bibitem{B99}
K.~Bogdan.
\newblock Representation of {$\alpha$}-harmonic functions in {L}ipschitz
  domains.
\newblock {\em Hiroshima Math. J.}, 29(2):227--243, 1999.

\bibitem{BB99}
K.~Bogdan and T.~Byczkowski.
\newblock Potential theory for the {$\alpha$}-stable {S}chr\"odinger operator
  on bounded {L}ipschitz domains.
\newblock {\em Studia Math.}, 133(1):53--92, 1999.


\bibitem{BJK19}
K.~Bogdan, S.~Jarohs, and E.~Kania.
\newblock Semilinear dirichlet problem for the fractional laplacian.
\newblock to appear in Nonlinear Analysis (2019).


\bibitem{BKK08}
K.~Bogdan, T.~Kulczycki, and M.~Kwa\'snicki.
\newblock Estimates and structure of {$\alpha$}-harmonic functions.
\newblock {\em Probab. Theory Related Fields}, 140(3-4):345--381, 2008.

\bibitem{B16}
C.~Bucur.
\newblock Some observations on the {G}reen function for the ball in the
  fractional {L}aplace framework.
\newblock {\em Commun. Pure Appl. Anal.}, 15(2):657--699, 2016.

\bibitem{BV16}
C.~Bucur and E.~Valdinoci.
\newblock {\em Nonlocal diffusion and applications}, volume~20 of {\em Lecture Notes of the Unione Matematica Italiana}
\newblock  Springer, [Cham]; Unione Matematica Italiana, Bologna, 2016.

\bibitem{CW18}
H.~Chen and T.~Weth.
\newblock {The Dirichlet problem for the Logarithmic Laplacian}.
\newblock {\em Comm. Partial Differential Equations}, 44(11):1100--1139, 2019.


\bibitem{DG17}
 S.~Dipierro and H.-Ch.~Grunau.
\newblock Boggio’s formula for fractional polyharmonic Dirichlet problems. 
\newblock {\em Ann. Mat. Pura Appl. (4)}, 196(4):1327–1344, 2017.

\bibitem{D12}
B.~Dyda.
\newblock Fractional calculus for power functions and eigenvalues of the
  fractional {L}aplacian.
\newblock {\em Fract. Calc. Appl. Anal.}, 15(4):536--555, 2012.

\bibitem{FW12}
M.M. Fall and T.~Weth.
\newblock Nonexistence results for a class of fractional elliptic boundary
  value problems.
\newblock {\em J. Funct. Anal.}, 263(8):2205--2227, 2012.

\bibitem{FW16}
M.M. Fall and T.~Weth.
\newblock Monotonicity and nonexistence results for some fractional elliptic
  problems in the half-space.
\newblock {\em Commun. Contemp. Math.}, 18(1):1550012, 25, 2016.

\bibitem{G61}
R.~K. Getoor.
\newblock First passage times for symmetric stable processes in space.
\newblock {\em Trans. Amer. Math. Soc.}, 101:75--90, 1961.

\bibitem{G08}
L.~Grafakos.
\newblock {\em Classical {F}ourier analysis}, volume 249 of {\em Graduate Texts
  in Mathematics}.
\newblock Springer, New York, second edition, 2008.

\bibitem{G11}
P.~Grisvard.
\newblock {\em Elliptic problems in nonsmooth domains}, volume~69 of {\em
  Classics in Applied Mathematics}.
\newblock Society for Industrial and Applied Mathematics (SIAM), Philadelphia,
  PA, 2011.
\newblock Reprint of the 1985 original.

\bibitem{G15:2}
G.~Grubb.
\newblock Fractional {L}aplacians on domains, a development of {H}\"ormander's
  theory of {$\mu$}-transmission pseudodifferential operators.
\newblock {\em Adv. Math.}, 268:478--528, 2015.

\bibitem{KM17}
M.~Kassmann and A.~Mimica.
\newblock Intrinsic scaling properties for nonlocal operators.
\newblock {\em J. Eur. Math. Soc. (JEMS)}, 19(4):983--1011, 2017.

\bibitem{K97}
T.~Kulczycki.
\newblock Properties of {G}reen function of symmetric stable processes.
\newblock {\em Probab. Math. Statist.}, 17(2, Acta Univ. Wratislav. No.
  2029):339--364, 1997.

\bibitem{L72}
N.S. Landkof.
\newblock {\em Foundations of Modern Potential Theory}.
\newblock Springer-Verlag, Berlin Heidelberg New York, 1972.

\bibitem{L83}
E.H. Lieb.
\newblock Sharp constants in the {H}ardy-{L}ittlewood-{S}obolev and related
  inequalities.
\newblock {\em Ann. of Math. (2)}, 118(2):349--374, 1983.

\bibitem{LL01}
E.H. Lieb and M.~Loss.
\newblock {\em Analysis}, volume~14 of {\em Graduate Studies in Mathematics}.
\newblock American Mathematical Society, Providence, RI, second edition, 2001.

\bibitem{PV18}
B.~Pellacci and G.~Verzini.
\newblock Best dispersal strategies in spatially heterogeneous
environments: optimization of the principal eigenvalue for
indefinite fractional {N}eumann problems.
\newblock {\em J. Math. Biol.}, 76(6):1357--1386, 2018.

\bibitem{RS12}
X.~Ros-Oton and J.~Serra.
\newblock The {D}irichlet problem for the fractional {L}aplacian: regularity up
  to the boundary.
\newblock {\em J. Math. Pures Appl. (9)}, 101(3):275--302, 2014.

\bibitem{RS16}
X.~Ros-Oton and J.~Serra.
\newblock Regularity theory for general stable operators.
\newblock {\em J. Differential Equations}, 260(12):8675--8715, 2016.

\bibitem{S07}
L.~Silvestre.
\newblock Regularity of the obstacle problem for a fractional power of the
  {L}aplace operator.
\newblock {\em Comm. Pure Appl. Math.}, 60(1):67--112, 2007.

\bibitem{SV17}
S.~Sprekels and E.~Valdinoci.
\newblock A new type of identification problems: optimizing the
fractional order in a nonlocal evolution equation.
\newblock {\em SIAM J. Control Optim.}, 55(1):70--93, 2017.

\bibitem{T76}
G.~Talenti.
\newblock Elliptic equations and rearrangements.
\newblock {\em Ann. Scuola Norm. Sup. Pisa Cl. Sci. (4)}, 3(4):697--718, 1976.

\bibitem{TTV18}
S.~Terracini, G.~Tortone, and S.~Vita.
\newblock On {$s$}-harmonic functions on cones.
\newblock {\em Anal. PDE}, 11(7):1653--1691, 2018.

\end{thebibliography}

\end{document}